\newcommand{\R}{\ensuremath{\mathbb{R}}}
\newcommand{\C}{\ensuremath{\mathbb{C}}}
\newcommand{\CP}{\mathbb{C}\mathbb{P}}
\newcommand{\pd}[2]{\frac{\partial #1}{\partial #2}}
\newcommand{\cqb}{\hbox{}\nobreak\hfill$\Box$}
\newcommand{\oo}{\mathrm{ord}}
\newcommand{\Sing}{\mathrm{Sing}}
\newcounter{teoremaganso}
\newtheorem{theorem}{Theorem}
\newtheorem{proposition}[theorem]{Proposition}
\newtheorem{corollary}[theorem]{Corollary}
\newtheorem{lemma}[theorem]{Lemma}
\newtheorem {exam} {Example}
\newtheorem {remark} {Remark}
\newenvironment{example}{\begin{exam}\rm}{\cqb\end{exam}}
\begin{document}



\title[Quadratic systems with algebraic limit cycles via Cremona maps]{Quadratic planar differential systems\\ with algebraic limit cycles\\ via quadratic plane Cremona maps}

\author[M. Alberich-Carrami\~nana, A. Ferragut, J. Llibre]{Maria Alberich-Carrami\~nana, Antoni Ferragut, Jaume Llibre}

\address{M. Alberich-Carrami\~nana: Institut de Rob\`otica i Inform\`atica Industrial (IRI, CSIC-UPC) and Departament de Matem\`atiques, Universitat Polit\`ecnica de Cata\-lunya, Av. Diagonal, 647, 08028 Barcelona, Spain}

\email{maria.alberich@upc.edu}

\address{A. Ferragut: Universidad Internacional de la Rioja. Avenida de la Paz, 137, 26006 Logro\~no, Spain}

\email{toni.ferragut@unir.net}

\address{J. Llibre: Departament de Matem\`{a}tiques, Universitat Aut\`{o}noma de Barcelona, 08193 Bella\-terra, Barcelona, Catalonia-Spain}

\email{jllibre@mat.uab.cat}

\subjclass[2010]{34A05, 34A34, 34C05}

\date{\today}


\begin{abstract}
In this paper we show how we can transform quadratic systems into new quadratic systems after some kind of birational transformations, the quadratic plane Cremona maps. We  afterwards apply these transformations to the families of quadratic differential systems having an algebraic limit cycle. As a consequence, we  provide a new family of quadratic systems having an algebraic limit cycle of degree 5. Moreover we show how the known families of quadratic differential systems having an algebraic limit cycle of degree greater than four are obtained using these transformations.  We also provide the phase portraits on the Poincar\'e disk of all the families of quadratic differential systems having algebraic limit cycles.
\end{abstract}

\keywords{quadratic differential system, algebraic limit cycle, Cremona plane map}

\maketitle

\section{Introduction}

We consider the quadratic planar  differential system
\begin{equation}\label{e1}
\dot x=p(x,y),\quad \dot y=q(x,y),
\end{equation}
where $p(x,y)$ and $q(x,y)$ are real coprime polynomials of degree two. Let $f \in {\R}[x,y]$. We say that $f=f(x,y)=0$ is an {\it invariant algebraic curve} of system \eqref{e1} if it satisfies
\begin{equation}
p(x,y)\pd fx(x,y)+q(x,y)\pd fy(x,y)=k(x,y)f(x,y),
\end{equation}
for some $k(x,y)$ polynomial of degree at most $1$ called the {\it cofactor} of $f(x,y)=0$.  If $f \in {\R}[x,y]$ has degree $n$, it is irreducible in ${\R}[x,y]$ and $f=0$ is an invariant algebraic curve, then we say that $f=0$ is an  {\it irreducible invariant algebraic curve of degree n}.

\smallskip

A {\it limit cycle} of system \eqref{e1} is an isolated periodic solution in the set of all periodic solutions of the system. If a limit cycle is contained into the set of points of an invariant algebraic curve, then it is called an {\it algebraic limit cycle}. We say that an algebraic limit cycle has {\it degree} $n$ if it is contained into the set of points of an irreducible invariant algebraic curve of degree $n$.

\smallskip

One of the most interesting questions on limit cycles was proposed by Hilbert \cite {H} in 1900 in the second part of $16^{th}$ Hilbert's Problem: {\it Compute $H(m)$ such that the number of limit cycles of any polynomial differential system of degree $m$ is less than or equal to $H(m)$}.

\smallskip

Hilbert's Problem remains unsolved even for $m=2$. It is known that a quadratic system with an invariant straight line has at most one limit cycle (see \cite{Co} or \cite{CL}).

\smallskip

The paper is structured as follows. In section \ref{S.Known} we provide the known families of planar quadratic differential systems having an algebraic limit cycle. We also provide their phase portrait in the Poincar\'e disk, which was never done before. In section \ref{S.PVF+Cre}  we first introduce the plane Cremona maps, in particular the quadratic ones. Afterwards we state and prove some results connecting local and global behavior that allow us to know {\em a priori} whether a Cremona transformation can be applied to obtain a new quadratic system, according to the local behavior of the base points. The degree of the transformed algebraic curve is also computed. To finish this section we study the particular case of quadratic Cremona maps applied to quadratic differential systems, which is the main aim of this work. The rest of main results are presented in section \ref{S.results}. Theorem \ref{T.C2a} shows under which conditions we can obtain a quadratic system after applying a quadratic Cremona map to a quadratic differential system. Afterwards, Theorem \ref{T.New5} provides a new quadratic differential system having an algebraic limit cycle of degree 5. These results are proved in sections \ref{P.TC2a} and \ref{S:new5}, respectively. Finally section \ref{S.rel} shows the relations among  the known quadratic systems with algebraic limit cycles via quadratic Cremona maps.

\section{The known families of planar quadratic differential systems having an algebraic limit cycle}\label{S.Known}

There is exactly one family having an algebraic limit cycle of degree 2, found by Qin in 1958 (see \cite{Q}).  Evdokimenco from 1970 to 1979 proved that there are no quadratic systems having an algebraic limit cycle of degree 3 (see  \cite{E1,E2,E3}), see Theorem 11 of \cite{CLM} for a short proof. There are four families having an algebraic limit cycle  of degree 4: the first one was found by Yablonskii in 1966 (see \cite{Y}); the second one was found by Filipstov in 1973 (see \cite{F}); Chavarriga found the third one and Chavarriga, Llibre and Sorolla found the fourth one, they both were published in 2004 in \cite{CLS2004}. It was also proved in \cite{CLS2004} that there are no other families having algebraic limit cycles of degree 4. Finally, up to now there were only one known family having an algebraic limit cycle of degree 5 and one known family having an algebraic limit cycle of degree 6, both of them found in 2005 (see \cite{CLS}). Both families are found after a birational transformation of the family due to Chavarriga, Llibre and Sorolla of \cite{CLS2004}. Moreover, in that paper a birational transformation relates Yablonskii's family and Qin's family. Since that paper no other families of quadratic systems having an algebraic limit cycle have been found.

\smallskip

Qin Yuan-Sh\"un summarizes in 1958 (see \cite{Q}) the quadratic systems having an algebraic limit cycle of degree 2 and he proves the uniqueness of this limit cycle:

\begin{proposition}[Qin limit cycle]\label{P.Qin}
If a quadratic system has an algebraic limit cycle of degree 2, then after an affine change of variables and time, the limit cycle becomes the circle $x^2+y^2-1=0$. Moreover, it is the unique limit cycle of the quadratic differential system, which can be written as
\begin{equation}\label{QinQS}
\begin{split}
\dot{x}&=-y(ax+by+c)-(x^2+y^2-1)\ ,\\
\dot{y}&=x(ax+by+c)\ ,
\end{split}
\end{equation}
with $a\neq 0$, $c^2+4(b+1)>0$ and $a^2+b^2<c^2$.
\end{proposition}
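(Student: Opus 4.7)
The plan is to normalize the invariant conic, parametrize the quadratic vector fields tangent to it, and then read off the three inequalities from the condition that the conic really carries a limit cycle. An algebraic limit cycle of degree $2$ is a bounded connected component of an irreducible real conic, hence necessarily an ellipse; parabolas and hyperbolas are unbounded, and reducible conics are excluded by definition. An affine change of coordinates therefore brings this ellipse to the unit circle $f:=x^2+y^2-1=0$.

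Next I describe all quadratic systems for which $f=0$ is invariant. The relation $2xp+2yq=k\,f$ forces $\deg k\le 1$, and since $xp+yq$ has no constant term one gets $k=-2\mu x+2\nu y$. A kernel/image computation for the linear map $(p,q)\mapsto xp+yq$ on pairs of quadratics (kernel generated by $(yg,-xg)$ with $g$ of degree at most one, image of dimension $9$) identifies the solution set as the $5$-parameter family
\begin{equation*}
\dot x=-y(ax+by+c)-\mu f,\qquad \dot y=x(ax+by+c)+\nu f.
\end{equation*}
Two parameters are still free: the affine stabilizer of the unit circle is $O(2)$, and a rotation of the plane rotates the pair $(\mu,\nu)\in\R^2$. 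If $(\mu,\nu)=(0,0)$ then every concentric circle is invariant and no isolated cycle can exist, so I rotate to $\nu=0$, $\mu>0$, and rescale time to obtain $\mu=1$, landing on Qin's form \eqref{QinQS}.

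It remains to translate ``$f=0$ is a limit cycle'' into inequalities on $a,b,c$. On the circle the field equals $L\cdot(-y,x)$ with $L=ax+by+c$, so the circle is a regular orbit iff the line $L=0$ misses the disk, giving $a^2+b^2<c^2$. On the $y$-axis one has $\dot y\equiv 0$, and $\dot x$ reduces to $g(y):=-(b+1)y^2-cy+1$; real zeros require $c^2+4(b+1)>0$, and $g(1)g(-1)=b^2-c^2<0$ then places a zero inside $(-1,1)$, supplying the singular point demanded by Poincar\'e--Bendixson. Finally, if $a=0$ the system is invariant under $(x,y,t)\mapsto(-x,y,-t)$, hence time-reversible, and cannot support a hyperbolic periodic orbit; in particular no algebraic limit cycle exists, so $a\ne 0$ is necessary.

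The subtlest step is uniqueness of the limit cycle. I plan to look for a Dulac function adapted to $f$---of the form $B=f^{-m}$ or $B=f^{-m}h$ for a polynomial $h$---making $\di(Bp,Bq)$ of constant sign on each connected component of $\R^2\setminus\{f=0\}$; failing that, the detailed phase-portrait analysis (one antisaddle inside the disk and a saddle outside) together with standard quadratic-system uniqueness results should close the argument. This is the step that will require most of the work.
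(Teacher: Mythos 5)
The paper does not actually prove this proposition: it is quoted as Qin's 1958 theorem with a pointer to \cite{Q}, so there is no internal argument to measure yours against, and your attempt must stand on its own. The normalization half of your proposal is essentially correct and complete: the oval forces the conic to be an ellipse, the kernel/image count (kernel $(yg,-xg)$, image all cubics vanishing at the origin) correctly identifies the five-parameter family of quadratic fields tangent to $x^2+y^2-1=0$, the $O(2)$-action legitimately reduces $(\mu,\nu)$ to $(1,0)$ after a time rescaling, and the conditions $a^2+b^2<c^2$ (no singular point on the circle) and $c^2+4(b+1)>0$ (real singular points, one inside the disk) are derived correctly --- indeed the second is already implied by the third, since $b^2<c^2$ gives $c^2+4b+4>(b+2)^2\ge 0$. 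For $a=0$ your conclusion is right but the justification is slightly off: reversibility does not merely forbid \emph{hyperbolic} periodic orbits (a limit cycle need not be hyperbolic); the correct statement is that the circle crosses the symmetry axis $x=0$ transversally, so the half-return map argument produces a full annulus of periodic orbits around it, whence it is not isolated.

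The genuine gap is the uniqueness of the limit cycle, which is the substantive analytic content of Qin's theorem and which you explicitly defer. Worse, the first tool you propose demonstrably fails: here $\di(p,q)=(b-2)x-ay$ and the cofactor of $f$ is $k=-2x$, so $\di(f^{-m}p,f^{-m}q)=f^{-m}\bigl((b-2+2m)x-ay\bigr)$ for every $m$; since $a\neq0$ this linear form vanishes on a diameter of the disk and changes sign across it, so no pure power of $f$ has the constant-sign property on the components of $\R^2\setminus\{f=0\}$. The fallback on ``standard quadratic-system uniqueness results'' is also not available off the shelf: the one such result the paper invokes (at most one limit cycle for a quadratic system with an invariant straight \emph{line}) does not apply, since \eqref{QinQS} need not have an invariant affine line when $a\neq0$. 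What is actually required is to show that every limit cycle surrounds the unique antisaddle inside the disk, that no limit cycle can meet the invariant circle, and then to exclude a second cycle nested inside or outside it --- this last step is where Qin (and later \cite{CGL}) do real work with a nontrivial Dulac-type or rotated-vector-field argument. As written, your proposal establishes the normal form and the necessity of the parameter restrictions, but not the uniqueness assertion of the proposition.
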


The case of the limit cycles of degree 3 was studied later on. Using three papers Evdokimenco proved from 1970 to 1979 that there are no quadratic systems having limit cycles of degree 3 (see \cite{E1,E2,E3}). A simpler proof can be found in \cite{CLM}.

\smallskip

Yablonskii \cite{Y} found the first family of quadratic differential systems having an algebraic limit cycle of degree $4$ in 1966:

\begin{proposition}[Yablonskii limit cycle]\label{P.Yab}
The quadratic differential system
\begin{equation}\label{YabQS}
\begin{split}
\dot{x}&=-4abcx-(a+b)y+3(a+b)cx^2+4xy \ , \\
\dot{y}&=(a+b)abx-4abcy+(4abc^2-{\frac 32}(a+b)^2+4ab)x^2+8(a+b)cxy+8y^2 \ , \\
\end{split}
\end{equation}
with $abc\neq 0$, $a\neq b$, $ab>0$ and $4c^2(a-b)^2+(3a-b)(a-3b)<0$, has the irreducible invariant algebraic curve
\[
(y+cx^2)^2+x^2(x-a)(x-b)=0
\]
of degree 4 having two components: an oval (the algebraic limit cycle) and an isolated singular point.
\end{proposition}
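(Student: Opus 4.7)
The plan is to establish the four assertions of the statement in sequence: invariance of $f(x,y)=(y+cx^2)^2+x^2(x-a)(x-b)$, irreducibility of $f$ over $\R[x,y]$, the description of the real zero set as the disjoint union of an oval and an isolated singular point, and the fact that the oval is an algebraic limit cycle.

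For invariance I would compute $f_x=4cx(y+cx^2)+2x(x-a)(x-b)+x^2(2x-a-b)$ and $f_y=2(y+cx^2)$. Since $\deg(p\,f_x+q\,f_y)\leq 5$ and $\deg f=4$, the candidate cofactor $k(x,y)$ must have degree at most one. Making an ansatz $k=\alpha+\beta x+\gamma y$, I would expand $p\,f_x+q\,f_y-k\,f$ and match coefficients monomial by monomial; this yields an overdetermined linear system in $\alpha,\beta,\gamma$ whose consistency and whose unique solution in terms of $a,b,c$ constitute the main computational verification of the proposition. This step is lengthy but purely mechanical, and it is the principal obstacle.

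Irreducibility follows by viewing $f$ as a quadratic in $y$ with discriminant $-4x^2(x-a)(x-b)\in\R[x]$; since $a\neq b$ this is not a perfect square, so $f$ admits no factorization $(y-u(x))(y-v(x))$ with $u,v\in\R[x]$, and the inspection of the degrees in $y$ rules out the remaining splittings. For the real locus, solving the quadratic in $y$ yields $y=-cx^2\pm x\sqrt{-(x-a)(x-b)}$, real precisely for $x$ in the closed interval $I$ bounded by $a$ and $b$; the hypothesis $ab>0$ forces $0\notin I$, so the two branches join smoothly at $x=a$ and $x=b$ and trace out a simple closed oval. Separately, $f(0,0)=f_x(0,0)=f_y(0,0)=0$, and the expansion $f=(y+cx^2)^2+ab\,x^2-(a+b)x^3+x^4$ with $ab>0$ shows that $f>0$ on a punctured neighborhood of the origin; hence the origin is an isolated real point of $f=0$ and a singular point of $f$.

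To conclude that the oval is an algebraic limit cycle, I would use the remaining inequality $4c^2(a-b)^2+(3a-b)(a-3b)<0$ to exclude equilibria of $(p,q)$ on the oval: substituting the parametrization $y=-cx^2\pm x\sqrt{-(x-a)(x-b)}$ into $p$ and $q$ and eliminating the radical should produce a polynomial in $x$ whose non-vanishing on $I$ is implied by that inequality. Since $f=0$ is invariant, the oval is a union of orbits; the absence of equilibria then forces it to be a single periodic orbit, and being a smooth connected component of an irreducible invariant algebraic curve it is automatically isolated among periodic orbits, hence a limit cycle.
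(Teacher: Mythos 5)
The paper does not actually prove Proposition \ref{P.Yab}: it is quoted from Yablonskii's 1966 paper \cite{Y} and stated without proof, so there is no internal argument to compare against. Judged on its own merits, your treatment of invariance (undetermined degree-one cofactor and coefficient matching), of irreducibility (Gauss's lemma plus the observation that the discriminant $-4x^2(x-a)(x-b)$ with respect to $y$ is not a square since $a\neq b$), and of the real locus (the oval over the interval between $a$ and $b$, which avoids $x=0$ because $ab>0$, together with the isolated singular point at the origin where $f\geq ab\,x^2+O(x^3)$) is sound, modulo the routine computations you defer and a quick check that $f_x=a^2(a-b)\neq 0$ at the turning points $x=a,b$ so the oval is genuinely smooth there.

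The genuine gap is in the last sentence. It is false that a periodic orbit which is a smooth connected component of an irreducible invariant algebraic curve is automatically isolated among periodic orbits. The linear center $\dot x=-y$, $\dot y=x$ has every circle $x^2+y^2=r^2$ as an irreducible invariant algebraic oval carrying a periodic orbit, and none is a limit cycle; quadratic Hamiltonian systems such as $\dot x=-y$, $\dot y=x+x^2$ give quadratic examples in which irreducible invariant cubics $H=h$ carry periodic orbits filling a period annulus. So after you have shown the oval is a periodic orbit, an additional argument is required to show it is isolated. The standard routes are: (i) prove hyperbolicity by showing $\int_0^T\mathrm{div}(p,q)\,dt\neq 0$ along the orbit, using the identity $\int_0^T\mathrm{div}(p,q)\,dt=\int_0^T k\,dt$ valid for a periodic orbit lying on a nonsingular part of an invariant algebraic curve with cofactor $k$; or (ii) show that the unique singular point enclosed by the oval is a strong focus (nonzero trace of the linearization), which for a quadratic system excludes a surrounding period annulus. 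Neither is supplied by your proposal, and neither follows from the steps you have carried out, so as written the conclusion that the oval is a \emph{limit cycle} is not established.
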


In 1973 a new family of algebraic limit cycles of degree 4 was found by Filipstov \cite{F}:

\begin{proposition}[Filipstov limit cycle]\label{P.Fil}
The quadratic differential system
\begin{equation}\label{FilQS}
\begin{split}
\dot{x}&=6(1+a)x+2y-6(2+a)x^2+12xy , \\
\dot{y}&=15(1+a)y+3a(1+a)x^2-2(9+5a)xy+16y^2 ,
\end{split}
\end{equation}
with $0<a<3/13$, has the irreducible invariant algebraic curve
\[
3(1+a)(ax^2+y)^2+2y^2(2y-3(1+a)x)=0
\]
of degree 4 having two components: one is an oval and the other one is homeomorphic to a straight line. This second component contains three singular points of the system.
\end{proposition}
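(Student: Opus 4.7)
The plan is to establish three claims: (a) the polynomial
\[
F(x,y) := 3(1+a)(ax^2+y)^2+2y^2\bigl(2y-3(1+a)x\bigr)
\]
defines an invariant algebraic curve of system \eqref{FilQS}; (b) $F$ is irreducible in $\R[x,y]$; and (c) the real zero locus of $F$ splits into an oval together with a non-compact component homeomorphic to $\R$ that contains exactly three singular points of \eqref{FilQS}.

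For (a), I would simply compute $p\,F_x+q\,F_y$ with $p,q$ the right-hand sides of \eqref{FilQS} and verify that $F$ divides the result, reading off the cofactor. Since $\deg(pF_x+qF_y)\le 5$ and $\deg F=4$, once divisibility is confirmed the cofactor is automatically linear, so the only substantive step is a finite symbolic polynomial division. For (b), note that the unique degree-four homogeneous part of $F$ is the single monomial $3a^2(1+a)\,x^4$. In any factorization $F=F_1F_2$ with both factors non-constant, the product of the leading homogeneous parts would have to equal $3a^2(1+a)\,x^4$, forcing each leading part to be a scalar multiple of a power of $x$. The two possible degree splits $(\deg F_1,\deg F_2)\in\{(1,3),(2,2)\}$ are then ruled out by expanding and matching against the explicit coefficients of $F$ (in the linear-times-cubic case the linear factor must be of the form $\alpha x+\beta$, and $F(-\beta/\alpha,y)$ fails to vanish identically in $y$; the quadratic-times-quadratic case is eliminated by an analogous coefficient comparison).

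For (c), the essential observation is that the first summand of $F$ is a perfect square, so $F(x,y)=0$ forces $y^2\bigl(2y-3(1+a)x\bigr)\le 0$. Consequently the real curve lies in the closed half-plane $H:=\{2y\le 3(1+a)x\}$, meeting its boundary line only at the two points where simultaneously $y+ax^2=0$ and $2y=3(1+a)x$. Intersecting $F=0$ with the coordinate axes locates further pierce points (along $y=0$ the curve meets the $x$-axis only at the origin, with multiplicity four, while $F(0,y)=y^2\bigl(3(1+a)+4y\bigr)$). Viewing $F(x_0,\cdot)=0$ as a real cubic in $y$ for each fixed $x_0$ and tracking the sign of its discriminant as $x_0$ varies then determines the number of real $y$-branches on each vertical section, and combined with the asymptotic analysis given by the Newton polygon shows that the real locus consists of a compact oval together with an unbounded branch homeomorphic to $\R$. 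The threshold $0<a<3/13$ is precisely the range in which the relevant discriminant keeps the oval non-degenerate. Finally, the finite singular points of \eqref{FilQS} are obtained by intersecting the two affine conics $p=0$ and $q=0$; checking which of these (at most four) intersection points satisfy $F=0$ and determining on which component of $F=0$ they lie yields the three singular points on the non-compact branch and none on the oval.

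The main obstacle is the topological step (c), in particular pinning down the exact role of the bound $a<3/13$. I expect it to emerge as the positivity condition on a discriminant or resultant governing when the two real branches over a certain $x$-interval bound a compact region; alternatively, it can be detected as the value of $a$ at which the oval pinches onto the unbounded branch through a self-intersection, and the system acquires a singular point on $F=0$ lying at that pinch. Parts (a) and (b) reduce to bounded symbolic computations and are essentially routine.
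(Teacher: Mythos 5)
First, a point of reference: the paper does not prove Proposition \ref{P.Fil} at all --- it is quoted from Filipstov \cite{F} as part of the survey in Section \ref{S.Known} --- so there is no internal proof to measure your argument against, and your attempt has to stand on its own. On those terms, your parts (a) and (b) are correct and essentially complete. The invariance check is a finite division, and your degree count shows the cofactor is automatically linear. The irreducibility argument is clean: the degree-four form of $F$ is the single monomial $3a^2(1+a)x^4$, so in any factorization both leading forms are scalar multiples of powers of $x$; neither putative factor can then contribute a $y^3$ monomial, so the term $4y^3$ of $F$ kills the $(1,3)$ and $(2,2)$ splits at once (your evaluation $F(-\beta/\alpha,y)\not\equiv 0$, a nonzero cubic in $y$, does the same job). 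The confinement $y^2\bigl(2y-3(1+a)x\bigr)\le 0$ on the real curve, the two boundary contacts with $2y=3(1+a)x$, and the axis intersections you list all check out.

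The gap is in part (c), which as written is a program rather than a proof. Three things are missing. First, the discriminant/Newton-polygon analysis is only announced, and it is not routine: the curve is singular at the origin, where the tangent cone is $y^2=0$ and the relevant quasihomogeneous part is the perfect square $3(1+a)(y+ax^2)^2$; substituting $y=-ax^2+u$ reduces the local equation to $3(1+a)u^2-6a^2(1+a)x^5+\cdots$, a higher (rhamphoid-type) cusp that is simultaneously a singular point of the vector field, and any count of real roots of $F(x_0,\cdot)$ along vertical sections must handle this point and the two boundary contacts correctly before one can conclude ``one oval plus one arc homeomorphic to $\R$''. Second, you explicitly leave open where $3/13$ comes from, offering two mutually incompatible guesses; since the claim is that the stated topology and singular-point distribution hold for \emph{every} $a\in(0,3/13)$, identifying what degenerates at $a=3/13$ --- and in particular whether it is a condition on the curve alone or on the position of the singular points of the system relative to the oval --- is not optional, and your strategy's success hinges on which alternative is true. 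Third, the assignment of exactly three of the (at most four) finite singular points to the unbounded component is asserted as a checkable computation but not performed, and the complementary fact that the oval carries no singular point and encloses the remaining one --- which is what makes it a periodic orbit and justifies the proposition's title --- is not addressed. None of these steps is expected to fail, but none of them is done.
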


The third algebraic limit cycle of degree four was found by Chavarriga in 1999, although the result  was first published in 2004 in \cite{CLS2004}:

\begin{proposition}[Chavarriga limit cycle]\label{P.Cha}
The quadratic differential system
\begin{equation}\label{ChaQS}
\begin{split}
\dot{x}&=5x+6x^2+4(1+a)xy+ay^2 , \\
\dot{y}&=x+2y+4xy+(2+3a)y^2 ,
\end{split}
\end{equation}
with $(-71+17\sqrt{17})/ 32<a<0$, has the irreducible invariant algebraic curve
\[
x^2+x^3+x^2y+2axy^2+2axy^3+a^2y^4=0
\]
of degree 4. It has three components; one of them is an oval and each one of the others is homeomorphic to a straight line. Each one of these last two components contains one singular point of the system.
\end{proposition}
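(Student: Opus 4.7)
\medskip

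\noindent\textbf{Proof plan.} The statement has three distinct assertions that I would establish separately: (i) the quartic $F(x,y):=x^2+x^3+x^2y+2axy^2+2axy^3+a^2y^4$ is invariant under the flow; (ii) $F$ is irreducible in $\R[x,y]$; (iii) for the prescribed range of $a$ the real zero set of $F$ has exactly three connected components, one oval and two line-like branches, with the two branches each meeting $\Sing$ of the vector field in a single point, and the oval is a limit cycle.

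First I would verify invariance by simply plugging the system \eqref{ChaQS} into the defining identity $p\,F_x+q\,F_y=kF$. Because the left-hand side has degree $5$ and $F$ has degree $4$, the cofactor $k$ is forced to be linear, so an undetermined-coefficient calculation $k=k_0+k_1x+k_2y$ solves uniquely and one reads off the precise $k$. For irreducibility I would observe that $F(x,y)=x^2(1+x+y)+2axy^2(1+y)+a^2y^4$, inspect its factorization pattern at $y=0$ (namely $x^2(1+x)$) and at $x=0$ (namely $a^2y^4$), and deduce that any putative nontrivial factorization $F=GH$ in $\R[x,y]$ would force the weights of $x$ and $y$ in $G$ and $H$ to be incompatible with these boundary factorizations; alternatively one reduces modulo the parameter~$a$ or uses a short Groebner-basis check to confirm $F$ is absolutely irreducible for the relevant values of $a$.

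For the topological description I would compute the singular locus of the curve $\{F=0\}$ by solving $F=F_x=F_y=0$; these are the only places where branches can meet, and they determine the decomposition into connected components. A classical way to see the oval is to regard $F$, for fixed small $|a|$, as a perturbation of $x^2(1+x+y)=0$, track how the double line $x=0$ opens into a smooth curve once the $a^2y^4$ and $2axy^2(1+y)$ terms are switched on, and locate the compact component by examining the sign of $F$ on suitable test points inside and outside the candidate region. The hypothesis $(-71+17\sqrt{17})/32<a<0$ is precisely the range on which the discriminant condition guaranteeing the existence of the oval (and preventing the singular set from lying on it) holds; I would derive this bound by eliminating $x$ from $F=F_x=0$ and imposing that the resulting univariate polynomial in $y$ has the correct sign pattern. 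Each of the two non-compact components is seen to contain exactly one equilibrium of the system by intersecting $\{F=0\}$ with $\{p=q=0\}$ and distributing the resulting points among the components.

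Finally, to upgrade the oval from ``closed orbit contained in an invariant curve'' to ``limit cycle'' I would argue that the curve $F=0$ is irreducible, so no one-parameter family of periodic orbits can fill a neighborhood of the oval (otherwise infinitely many algebraic integrals would coexist, forcing $F$ to be a Darboux factor in a larger first integral, which a direct check rules out). A cleaner alternative is to evaluate the characteristic exponent $\int_0^T k(x(t),y(t))\,dt$ along the oval using the explicit cofactor obtained in the first step; its non-vanishing implies hyperbolicity and hence that the cycle is a limit cycle. The main obstacle I anticipate is the topological step: rigorously proving that for every $a$ in the stated open interval the real quartic has exactly the announced three components requires a careful resultant/discriminant analysis rather than a numerical inspection, and it is here that the specific upper bound $(-71+17\sqrt{17})/32$ enters in an essential way.
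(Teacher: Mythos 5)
The paper does not prove this proposition: it is quoted as a known result (Chavarriga 1999, published in \cite{CLS2004}) in the survey Section~\ref{S.Known}, so there is no internal proof to compare your attempt against; the verification lives in \cite{CLS2004}. Judged on its own terms, your outline is the standard one (verify the cofactor identity, check irreducibility, analyse the real topology of the quartic, then upgrade the oval to a limit cycle), and the first step is unproblematic. But two steps would fail as written. First, the irreducibility argument from the restrictions $F(x,0)=x^2(1+x)$ and $F(0,y)=a^2y^4$ does not close: a putative splitting $F=GH$ with $G(x,0)=x^2$, $G(0,y)=\alpha y^2$ and $H(x,0)=1+x$, $H(0,y)=(a^2/\alpha)y^2$ is perfectly compatible with those boundary factorizations, so no ``incompatible weights'' contradiction arises; note moreover that $F=\bigl(ay^2+x(1+y)\bigr)^2-x^2(y^2+y-x)$, so $F$ does split over the quadratic extension by $\sqrt{y^2+y-x}$, and ruling out a splitting in $\R[x,y]$ genuinely requires an argument (e.g.\ that $y^2+y-x$ is not a square, combined with uniqueness of the factorization over the extension field, or an explicit undetermined-coefficients computation).

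Second, your primary argument that the oval is isolated is not valid: the existence of a period annulus around the oval does \emph{not} force ``infinitely many algebraic integrals'' or any additional invariant algebraic curve --- a center can have a continuum of non-algebraic periodic orbits together with a single invariant algebraic curve, so nothing is contradicted. The correct route is your ``cleaner alternative'', but it too needs more than you state: the identity $\oint_\gamma k\,dt=\oint_\gamma \di X\,dt$ for a periodic orbit $\gamma$ contained in $\{f=0\}$ requires that $\nabla f$ not vanish on $\gamma$ (this is the lemma used in \cite{CGL}), and the non-vanishing of that integral is itself a nontrivial computation, not a formality. Alternatively one can exclude a center by appealing to the classification of quadratic centers. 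Either way, the step ``closed orbit on an invariant curve $\Rightarrow$ limit cycle'' is exactly where the substance lies, and your proposal currently leaves it unsupported.
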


Finally, Chavarriga, Llibre and Sorolla \cite{CLS2004} in 2004 found the fourth one:

\begin{proposition}[Chavarriga, Llibre and Sorolla limit cycle]\label{P.CLS}
The quadratic differential system
\begin{equation}\label{CLS4QS}
\begin{split}
\dot{x}&=2(1+2x-2ax^2+6xy), \\
\dot{y}&=8-3a-14ax-2axy-8y^2,
\end{split}
\end{equation}
with $0<a<1/4$, possesses the irreducible invariant algebraic curve
\[
\frac 14+x-x^2+ax^3+xy+x^2y^2=0
\]
of degree 4 having three components; one of them is an oval and each of the others is homeomorphic to a straight line. One of these last two components contains two singular points of the system, the other does not contain any singular point.
\end{proposition}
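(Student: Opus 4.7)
The plan is a direct verification followed by a geometric analysis of the real curve. Setting $f=\tfrac14+x-x^2+ax^3+xy+x^2y^2$ and $(p,q)$ as in \eqref{CLS4QS}, I would first compute $p\,\pd{f}{x}+q\,\pd{f}{y}$ and check that the result factors as $k(x,y)\,f(x,y)$ with $k$ of degree at most one; this yields simultaneously the invariance of $f=0$ and the cofactor. Since it is a polynomial identity in $x,y,a$, this step is independent of the range $0<a<1/4$.

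To identify the components I would regard $f$ as a quadratic polynomial in $y$,
\[
x^2y^2+xy+\left(\tfrac14+x-x^2+ax^3\right)=0,
\]
whose $y$-discriminant equals $-4x^3(ax^2-x+1)$. For $0<a<1/4$ the polynomial $ax^2-x+1$ has two positive real roots $x_-<x_+$, so the set of real points of the curve is exactly $\{x\le 0\}\cup[x_-,x_+]$. On the compact interval $[x_-,x_+]$ the two $y$-branches coincide at the endpoints and produce a closed oval. On $\{x<0\}$ the discriminant is strictly positive, so the two branches are disjoint smooth graphs; since $f(0,y)=\tfrac14\neq 0$, the line $x=0$ is a vertical asymptote, while the dominant $ax^5$ term of the discriminant separates the two branches as $x\to-\infty$ (one tends to $+\infty$, the other to $-\infty$). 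This yields two further components each homeomorphic to $\R$, for a total of three. Irreducibility of $f$ in $\R[x,y]$ then follows since the $y$-discriminant contains the odd-multiplicity factor $x^3$, hence is not a square in $\R[x]$, ruling out a factorisation into two $y$-degree-one factors; and a $y$-constant factor is impossible since $\gcd(x^2,x,\tfrac14+x-x^2+ax^3)=1$.

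To place the singular points I would solve $p=q=0$. From $p=0$ and $x\neq 0$ one obtains $y=(2ax^2-2x-1)/(6x)$; substituting into $q=0$ produces a one-variable polynomial in $x$ whose real roots for $0<a<1/4$ I would enumerate, and for each such root I would evaluate $f$ and the sign of $x$ to locate the corresponding singular point among the three components. This should show that two finite singular points sit on one of the line-like branches, none on the other, and none on the oval. The main obstacle I expect is the final step: showing that the oval is genuinely an isolated periodic orbit (an algebraic limit cycle) rather than a member of a continuous annulus of periodic orbits. For this I would use the cofactor $k$ obtained in the first step: the characteristic exponent of the periodic orbit equals $\oint_{\text{oval}} k\,dt$, and checking that it is nonzero provides both the isolation and the hyperbolicity required.
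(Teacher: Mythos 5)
The paper does not prove this proposition: it is quoted verbatim from Chavarriga--Llibre--Sorolla \cite{CLS2004}, and the only original contribution here is the remark immediately following it, correcting the singular-point count of \cite{CGL} on the two line-like components. So there is no in-paper argument to compare against; your proposal has to stand on its own. As a plan it is essentially sound and standard for this kind of statement. The cofactor verification is a polynomial identity and unproblematic. Your component analysis via the $y$-discriminant $-4x^3(ax^2-x+1)$ is correct: for $0<a<1/4$ the quadratic $ax^2-x+1$ has two positive roots $x_-<x_+$, the real locus is $\{x<0\}\cup[x_-,x_+]$ (note $x=0$ is excluded since $f(0,y)=\tfrac14$, so write strict inequality), the compact piece is an oval and the two branches over $x<0$ are unbounded graphs. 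The irreducibility argument (discriminant has the odd-multiplicity factor $x^3$, hence is not a square in $\R[x]$, plus trivial content) is clean and works via Gauss's lemma.

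Two points deserve flagging. First, the singular-point location is exactly the delicate part of the statement --- it is the claim on which \cite{CGL} erred, which is why the present paper issues a correction --- and your proposal leaves it entirely as ``I would enumerate.'' Substituting $y=(2ax^2-2x-1)/(6x)$ into $q=0$ yields a quartic in $x$; you must determine which of its roots are real for $0<a<1/4$, evaluate $f$ at each, and decide which branch ($y_+$ or $y_-$ over $x<0$) the point lies on. This is finite but genuinely error-prone, and without carrying it out the last sentence of the proposition is unverified. Second, the isolation of the oval (the actual ``limit cycle'' content) is correctly identified by you as the hard step, and the identity $\oint_\gamma \mathrm{div}\,dt=\oint_\gamma k\,dt$ for a periodic orbit inside an invariant curve is the right tool; but you give no indication of how to show this line integral is nonzero, and that is not a routine computation --- in the literature it is handled by the uniqueness/hyperbolicity machinery of \cite{CGL}. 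Strictly speaking the body of the proposition only asserts the structure of the curve and the distribution of singular points, so your sketch covers what is literally claimed; but if the limit-cycle property is to be included, the final step of your proposal is a genuine gap rather than a deferred computation.
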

In what follows we denote this system by {\rm CLS}. We have corrected a mistake in \cite{CGL} concerning the singular points on the components of the algebraic curve of \eqref{CLS4QS}.

\smallskip

It was proved in \cite{CLS2004} that there are no other families of quadratic systems having algebraic limit cycles of degree 4. That is, after an affine change of variables and time, the unique quadratic systems having an algebraic limit cycles of degree 4 are the previous ones.

\smallskip

Concerning families of quadratic systems having algebraic limit cycles of degree greater than 4, up to now there were only one known family having an algebraic limit cycle of degree 5 and one known family having an algebraic limit cycle of degree 6. Both on them were presented by Christopher, Llibre and \'Swirszcz in 2005 (see \cite{CLS}):

\begin{proposition}[Christopher, Llibre and \'Swirszcz limit cycle of degree 5]\label{P.CLS5}
The quadratic differential system
\begin{equation}\label{CLS5QS}
\begin{split}
\dot x&=28x+2 (16-\alpha^2) (\alpha+12) x^2+6 (3 \alpha-4) xy-\frac{12 }{\alpha+4}y^2,\\
\dot y&=2 (16-\alpha^2)x+8 y+(16-\alpha^2) (\alpha+12)  xy+2 (5 \alpha-12) y^2,
\end{split}
\end{equation}
where $\alpha\in(3\sqrt 7/2,4)$, has an algebraic limit cycle contained into the algebraic curve of degree 5
\[
\begin{split}
f(x,y)=&\,x^2+(16-\alpha^2)  x^3+(\alpha-2) x^2y-\frac{2}{\alpha+4}x y^2-\frac{1}{4}   (4-\alpha) (\alpha+12)x^2 y^2\\
&+\frac{8-\alpha}{\alpha+4}x y^3+\frac{1}{(\alpha+4)^2}y^4+ \frac{\alpha+12 }{\alpha+4}xy^4-\frac{6 }{(\alpha+4)^2}y^5.
\end{split}
\]
The cofactor of this curve is
\[
k(x,y)=56 + 6 (16- \alpha^2) (\alpha+12) x + 4(13 \alpha-24 ) y.
\]
The curve has two components; one of them is an oval and  the other is homeomorphic to a straight line. This last component contains two singular points of the system.
\end{proposition}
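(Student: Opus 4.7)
The plan is to reduce the proposition to four checks: (i) that $f=0$ is an invariant algebraic curve with the asserted cofactor, (ii) that for $\alpha\in(3\sqrt 7/2,4)$ the real zero set of $f$ decomposes into an oval plus a branch homeomorphic to $\R$, (iii) that the oval is a limit cycle of the system, and (iv) that exactly two singular points of the system lie on the non-oval component.

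For step (i) I would simply carry out the polynomial identity
\[
p(x,y)\,\pd{f}{x}(x,y) + q(x,y)\,\pd{f}{y}(x,y) \;=\; k(x,y)\,f(x,y)
\]
by expansion and coefficient comparison. The equality is a finite check depending rationally on $\alpha$ and $(\alpha+4)$, so it suffices to clear denominators (multiplying $f$ by $(\alpha+4)^2$) and verify it term by term; this is tedious but wholly routine. Step (ii) is the main geometric content: I would analyse the real affine variety $\{f=0\}$ by first computing its singular locus $f=f_x=f_y=0$, then studying its branches at infinity via the Newton polygon of the homogenisation; this lets us count and locate the real points at infinity and describe the real topological type. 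The bound $\alpha<4$ ensures $16-\alpha^2\neq 0$ so that the coefficients stay bounded and of the correct sign, and the bound $\alpha>3\sqrt 7/2$ is identified (by looking at the discriminant of $f$ viewed as a polynomial in $y$, or equivalently by tracing when the oval is born/dies in a bifurcation) as the exact condition under which a bounded oval component exists and is disjoint from the unbounded component.

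For step (iii), once an oval exists, it is a union of orbits because $f$ is invariant, and being compact with no equilibrium on it (this last fact is verified by listing the singular points of the system and checking they are off the oval) it is a periodic solution. To upgrade it to a \emph{limit} cycle, I would invoke the Darboux-type observation that the cofactor $k(x,y)$ does not vanish identically on the oval (its linear part is manifestly nonzero), so the system has no polynomial first integral with $f$ as a factor in a neighbourhood of the oval, and hence the oval is isolated among periodic orbits. Finally for step (iv), I would compute the affine singular points of \eqref{CLS5QS} explicitly (solving $\dot x=\dot y=0$) and substitute each into $f$ to identify those lying on $\{f=0\}$; the two points that appear must then lie on the component homeomorphic to a line, since step (ii) guarantees no singular point sits on the oval.

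The main obstacle is clearly step (ii): producing a clean global description of the real topology of a quintic curve depending on a parameter. A purely algebraic classification (e.g.\ via resultants) is messy, so in practice I would supplement it by a continuity argument — verify the topology for one convenient value of $\alpha$ in the interval, show that no bifurcation of the real curve occurs on $(3\sqrt 7/2,4)$ by proving that the discriminant locus of $f$ meets the parameter interval only at its endpoints, and conclude by persistence. Steps (i), (iii), (iv) are then comparatively mechanical.
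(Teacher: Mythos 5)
Your proposal takes a genuinely different route from the paper. The paper never proves Proposition \ref{P.CLS5} by direct verification: the result is imported from \cite{CLS}, and within the paper the system \eqref{CLS5QS} together with its quintic and cofactor is re-derived in Section \ref{SS.45} by moving the singular point $(-1/(\alpha+4),(\alpha-2)/2)$ of the quartic system {\rm CLS} \eqref{CLS4QS} to the origin, swapping variables, and applying the quadratic Cremona map {\rm (C2)} via Theorem \ref{T.C2a}. In that approach all the hard geometric content of your step (ii) --- the existence of the oval, its disjointness from the other component, and the parameter window --- is \emph{inherited} from the degree-$4$ case (Proposition \ref{P.CLS}, proved in \cite{CLS2004}): the Cremona map is an isomorphism off its maximal contractile curve (Lemma \ref{L.MaximalContractile}), one checks the oval avoids the contracted lines, and the degree $5$ and the component/singular-point count are transported by Theorem \ref{P.MP}. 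The parameter range $\alpha\in(3\sqrt 7/2,4)$ is the image of $a\in(0,1/4)$, not the output of a fresh discriminant computation. Your direct attack is legitimate in principle, and steps (i) and (iv) are indeed mechanical, but step (ii) as you describe it (discriminant locus plus persistence for a parametric real quintic) is exactly the heavy lifting the birational method is designed to avoid.

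There is also a genuine gap in your step (iii). The inference ``the cofactor does not vanish identically on the oval, hence there is no polynomial first integral with $f$ as a factor near the oval, hence the oval is isolated among periodic orbits'' is a non sequitur twice over: a nonzero cofactor is compatible with the existence of first integrals of other (Darboux, or merely continuous) types, and the absence of a polynomial first integral does not preclude an annulus of periodic orbits surrounding the oval. To upgrade the periodic orbit to a limit cycle you need either a hyperbolicity argument --- e.g.\ the classical identity that for a periodic orbit $\gamma\subset\{f=0\}$ the characteristic exponent $\oint_\gamma \di(p,q)\,dt$ equals $\oint_\gamma k\,dt$, which must then be shown nonzero --- or, as in the paper's scheme, the observation that the Cremona map is a diffeomorphism of a neighbourhood of the degree-$4$ oval onto a neighbourhood of the degree-$5$ oval, so ``isolated periodic orbit'' is transported verbatim from Proposition \ref{P.CLS}. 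As written, your step (iii) would not survive scrutiny.
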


We shall denote   system \eqref{CLS5QS} by {\rm CLS5}.

\begin{proposition}[Christopher, Llibre and \'Swirszcz limit cycle of degree 6]\label{P.CLS6}
The quadratic differential system
\begin{equation}\label{CLS6QS}
\begin{split}
\dot x&=28\beta ( \beta - 30 )  x + y + 1 6 8 \beta^2 x^2 + 3 x y,\\
\dot y&=224\beta^2(\beta-30)^2 x+516\beta(\beta-30) y+1344\beta^3(\beta-30)  x^2 +24\beta (17\beta-6)xy+6y^2,
\end{split}
\end{equation}
where $\beta\in(3/2,2)$, has an algebraic limit cycle contained into the algebraic curve of degree 6
\[
\begin{split}
f(x,y)=& 48\beta^3(\beta - 30)^4x^2+ 24\beta^2(\beta - 30)^3xy+ 3\beta(\beta - 30)^2  y^2\\
& + 64\beta^3(\beta - 30)^3(9\beta - 4)x^3+ 24\beta^2(\beta - 30)^2(9\beta - 4)x^2y \\
& + 18\beta(\beta - 30)( \beta-2) xy^2 -7y^3 + 576\beta^3(\beta - 30)^2( \beta-2)^2x^4 \\
&+ 144\beta^2(\beta - 30)(\beta - 2)^2x^3y + 27\beta(\beta - 2)^2 x^2y^2\\
&- 3456\beta^3(\beta - 30)(\beta-2)^2( 2\beta+3)x^5- 432\beta^2(\beta - 2)^2( 2\beta+3)x^4y  \\
&+ 3456\beta^3(\beta - 2)^2( \beta+12)( 2\beta+3)x^6.
\end{split}
\]
The cofactor of this curve is
\[
k(x,y)=168\beta (\beta-30)   + 1008 \beta^2 x + 18 y.
\]
The curve has two components; one of them is an oval and  the other is homeomorphic to a straight line. This last component contains three singular points of the system.
\end{proposition}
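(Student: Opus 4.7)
My plan is to establish three claims in order: (i) $f(x,y)$ is an invariant algebraic curve of \eqref{CLS6QS} with the stated cofactor; (ii) the real zero set of $f$ has the claimed two connected components for every $\beta\in(3/2,2)$, one of them being an oval; and (iii) that oval is an isolated periodic orbit.

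For (i), the direct route is to put $F(x,y):=p\,f_{x}+q\,f_{y}-k\,f$ and expand. Since $p,q$ have degree~$2$, $f$ has degree~$6$ and $k$ has degree~$1$, $F$ has degree at most~$7$ in $(x,y)$ and its coefficients are polynomial expressions in $\beta$; proving the invariance relation then reduces to checking that each such coefficient is identically zero, a mechanical finite list of polynomial identities in $\beta$ best verified with a computer algebra system. Alternatively, and more in the spirit of the paper, \eqref{CLS6QS} is known to arise from the Chavarriga--Llibre--Sorolla system \eqref{CLS4QS} by a quadratic Cremona transformation: once the transformation is pinned down, Theorem~\ref{T.C2a} together with the degree formula of Section~\ref{S.PVF+Cre} and the invariance recorded in Proposition~\ref{P.CLS} deliver both the invariance and the explicit cofactor of $f$ essentially for free.

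For (ii), I first check that $f$ is irreducible in $\R[x,y]$, e.g.\ by specialising $\beta$ to a rational value in $(3/2,2)$ and applying a standard irreducibility test. Next I determine the singularities of $\{f=0\}$ by solving $f=f_{x}=f_{y}=0$: the quadratic part of $f$ equals $3\beta(\beta-30)^{2}\bigl(4\beta(\beta-30)x+y\bigr)^{2}$, so the origin is automatically a singular point of the curve, and I expect the three equilibria of the system named in the statement to be the only real singular points of the curve, all sitting on the unbounded branch through the origin. Combining a local branch analysis at each such point with the behaviour at infinity (controlled by the unique degree-$6$ monomial $x^{6}$) and a global count via B\'ezout, I aim to show that the real locus decomposes into exactly one compact smooth oval plus one connected unbounded branch homeomorphic to $\R$; a continuity argument in $\beta$ then propagates the picture across the whole parameter interval.

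For (iii), once (ii) is in hand, the oval is a compact connected invariant set free of equilibria of \eqref{CLS6QS}, so the Poincar\'e--Bendixson theorem forces it to be a periodic orbit; its isolation follows by the standard argument for algebraic limit cycles, namely computing the characteristic multiplier along the cycle from the cofactor and checking it is not~$1$. The main obstacle of the plan is part~(ii): giving a uniform description of the real topology of a singular sextic whose coefficients depend non-trivially on $\beta$. In practice this requires either a discriminant computation to rule out the appearance of extra real singularities as $\beta$ ranges in $(3/2,2)$, or fixing one reference value of $\beta$, establishing the topology there explicitly, and transporting the description by continuity. The computation in~(i) is tedious but conceptually routine; all the real work sits in~(ii).
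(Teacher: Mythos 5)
Your part (i) is fine, and your observation that the quadratic part of $f$ equals $3\beta(\beta-30)^2\bigl(4\beta(\beta-30)x+y\bigr)^2$ is correct. But note that the paper does not prove this proposition by direct verification at all: it is quoted from \cite{CLS}, and the route the paper itself supplies (Section \ref{SS.46} together with Theorem \ref{P.MP}, Lemma \ref{L.CremonaOnCurves} and Lemma \ref{L.MaximalContractile}) is precisely the ``alternative'' you mention only in passing for the invariance claim: CLS6 is the image of the CLS system \eqref{CLS4QS} under the quadratic Cremona map \eqref{C2}, after moving a suitable singular point to the origin and swapping the variables. That route does not merely give the invariance and the cofactor ``essentially for free''; it gives everything in your parts (ii) and (iii) for free as well. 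Since the Cremona map is an isomorphism off the maximal contractile curve (Lemma \ref{L.MaximalContractile}) and the oval of the CLS quartic does not meet that locus, the oval is carried diffeomorphically to an oval of the sextic which is still an isolated periodic orbit; the degree $6$ comes from Lemma \ref{L.CremonaOnCurves} via $2\cdot 4-\sum_i m(p_i,C)=6$; and the component count and the singular points on the unbounded branch are read off from Proposition \ref{P.CLS} by tracking the two non-oval components and the contracted lines. This is exactly the argument the paper writes out in full for its new degree-5 system in Section \ref{S:new5}.

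As written, your parts (ii) and (iii) are programs rather than proofs, and both have genuine gaps. In (ii) you name the main one yourself: controlling the real topology of a parameter-dependent singular sextic uniformly in $\beta$ is hard, and neither the discriminant computation nor the continuity transport is carried out; moreover B\'ezout alone cannot pin down the number of connected components of a real locus. In (iii), isolation via the characteristic multiplier requires showing $\oint_\gamma k\,dt\neq 0$ along a cycle you cannot locate explicitly, and you give no indication of how to do this; the usual escapes (analyticity of the return map plus exclusion of a period annulus, or the uniqueness results of \cite{CGL}) are not invoked. All of these difficulties evaporate if you commit to the Cremona route throughout, which is what the paper (following \cite{CLS}) actually does.
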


We shall denote  this system by {\rm CLS6}.


\smallskip

Figure \ref{F.PP} shows the phase portraits of all these seven families of quadratic systems in the Poincar\'e disk. We note that Qin's system has two topologically non-equivalent phase portraits depending on the parameters.

\begin{figure}[th]
\centering
\begin{tabular}{ccc}
\includegraphics[width=3.5cm]{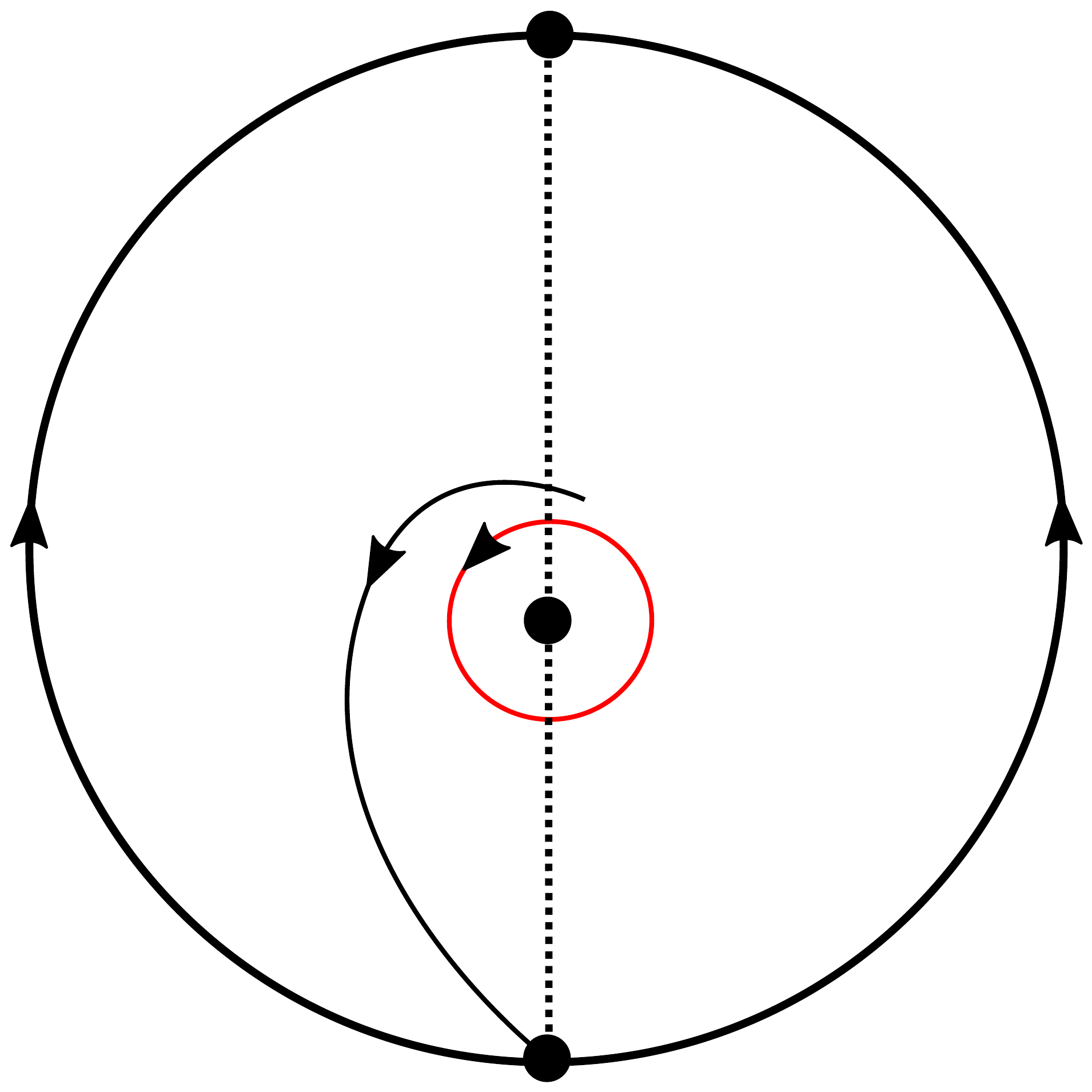}&\includegraphics[width=3.5cm]{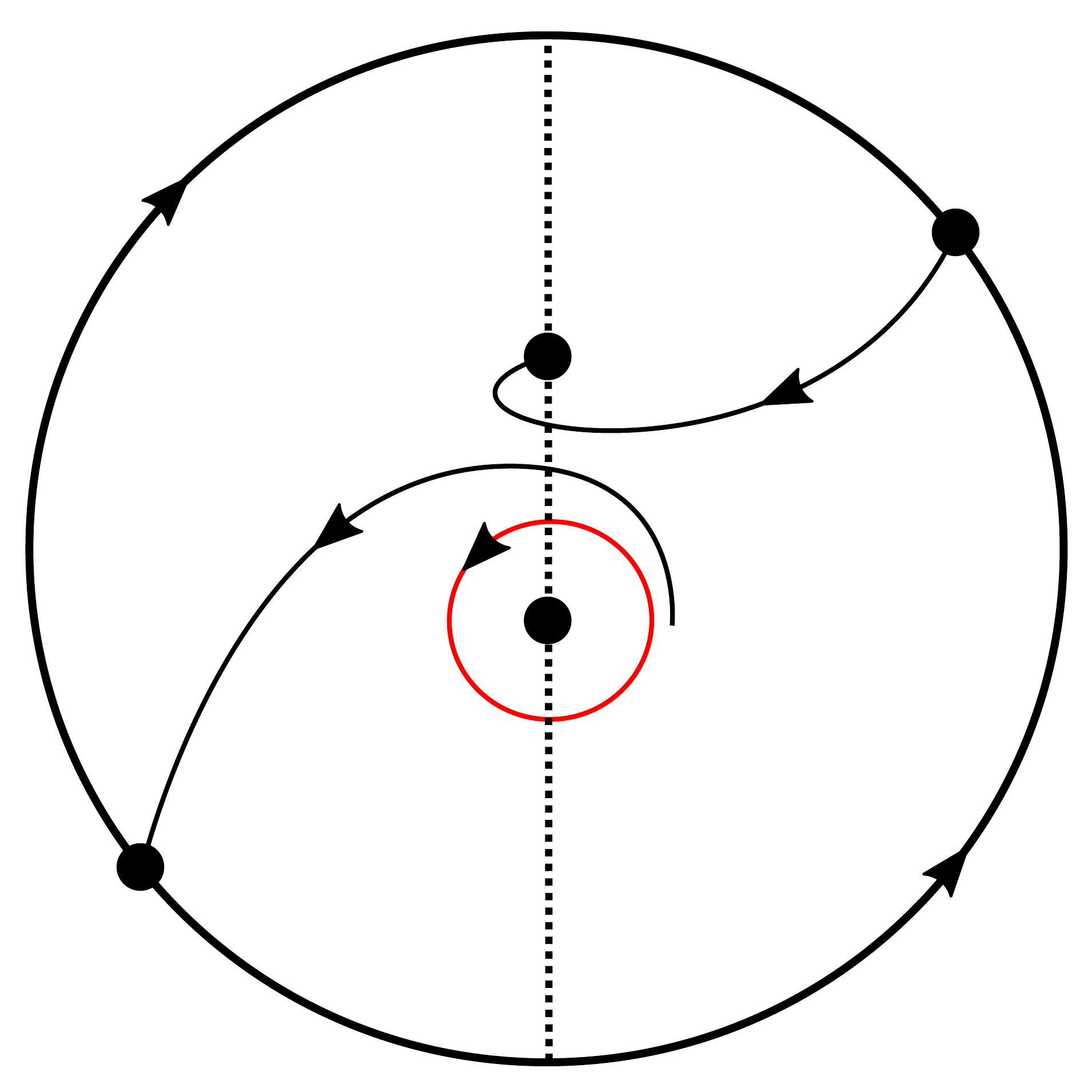}&\includegraphics[width=3.5cm]{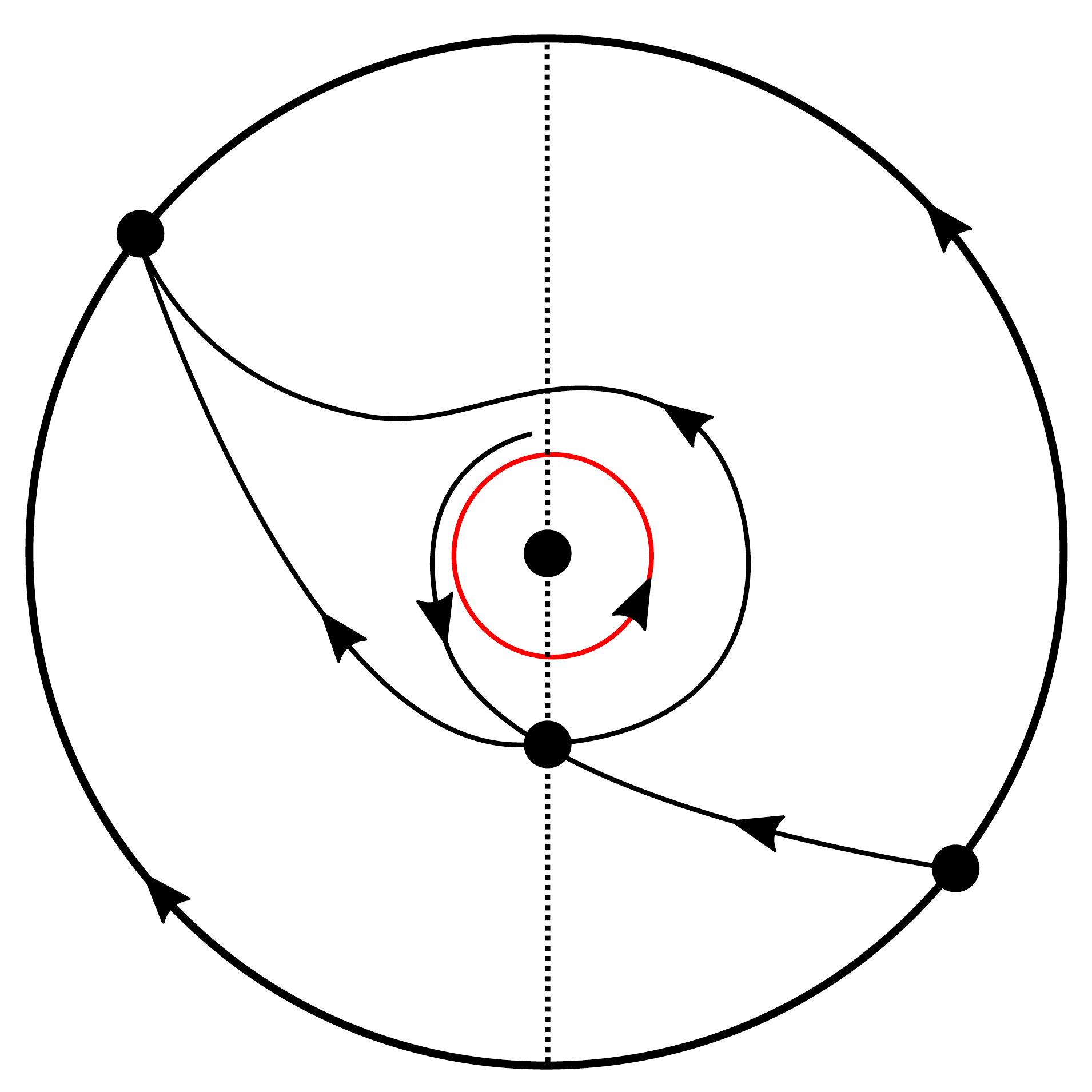}\cr
Qin with $b=-1$&Qin with $b<-1$&Qin with $b>-1$\cr\cr
\includegraphics[width=3.5cm]{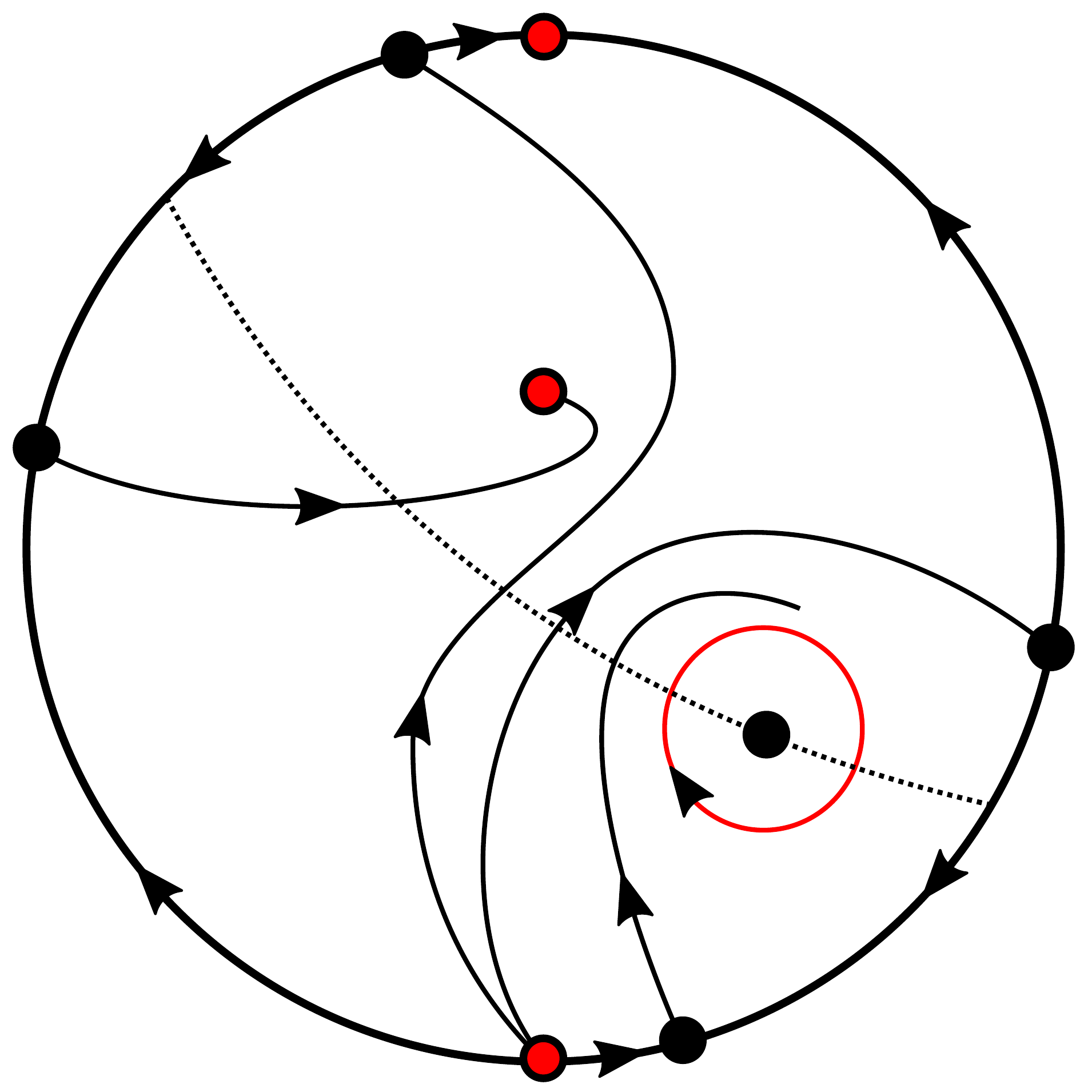}&\includegraphics[width=3.5cm]{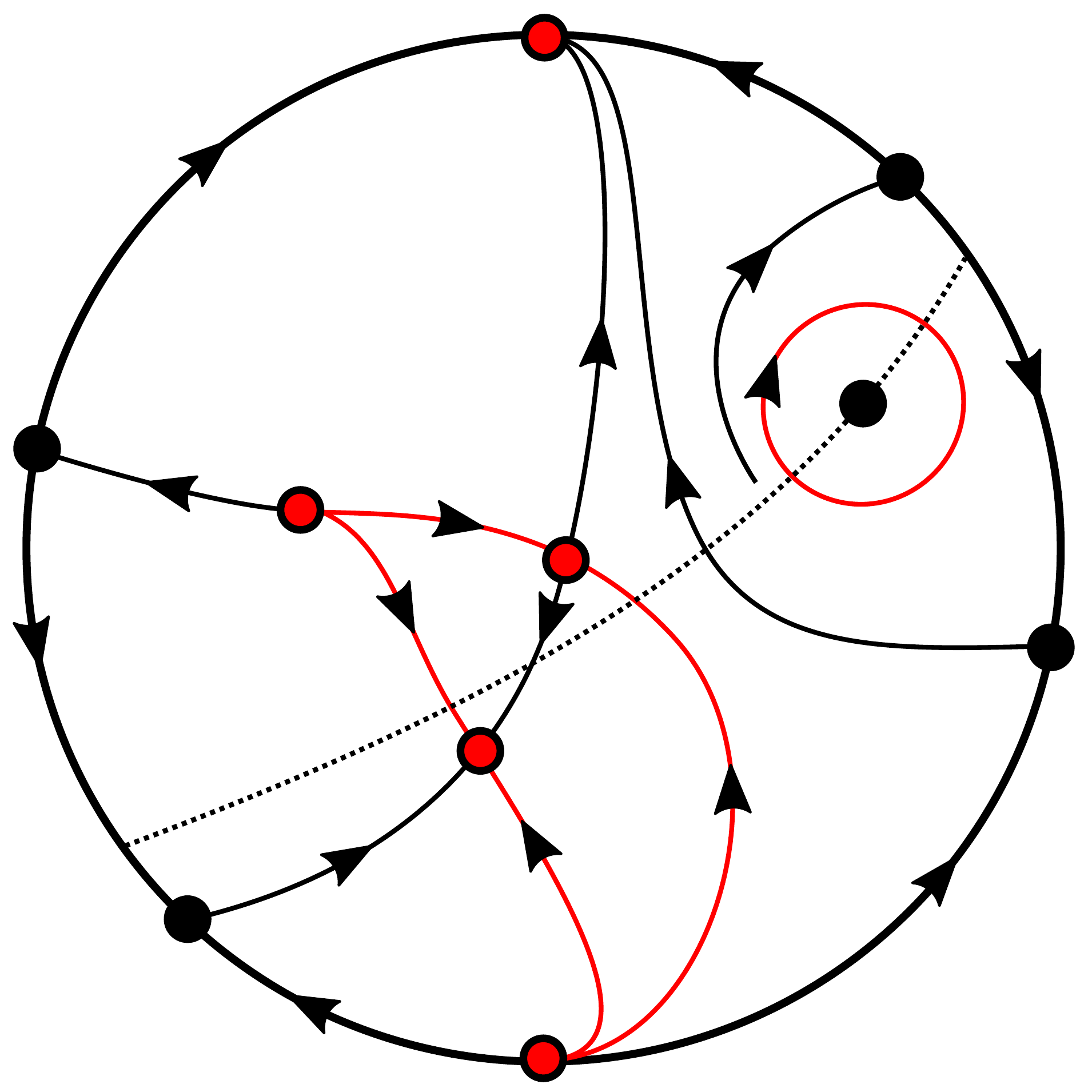}&\includegraphics[width=3.5cm]{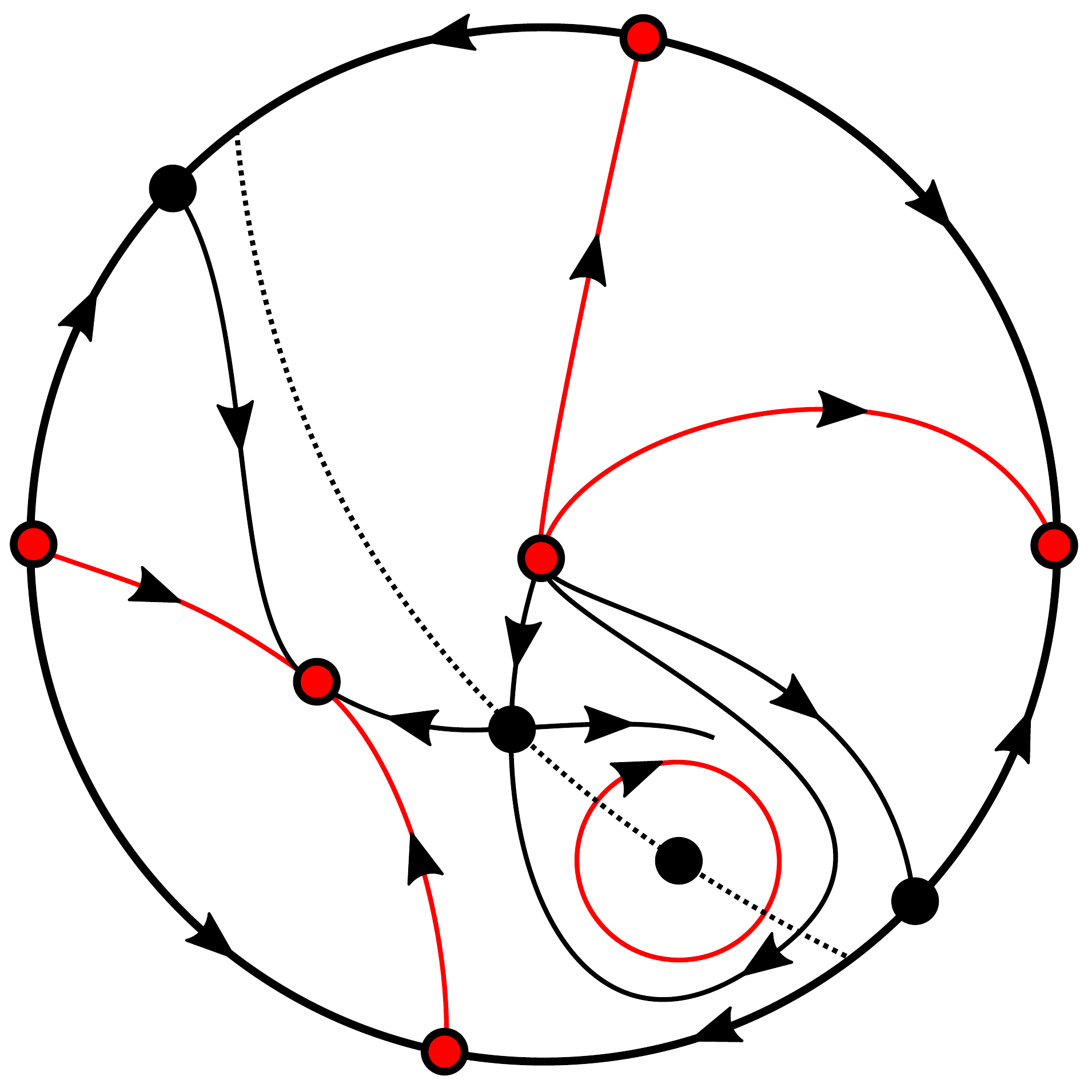}\cr
Yablonskii&Filipstov&Chavarriga\cr\cr
\includegraphics[width=3.5cm]{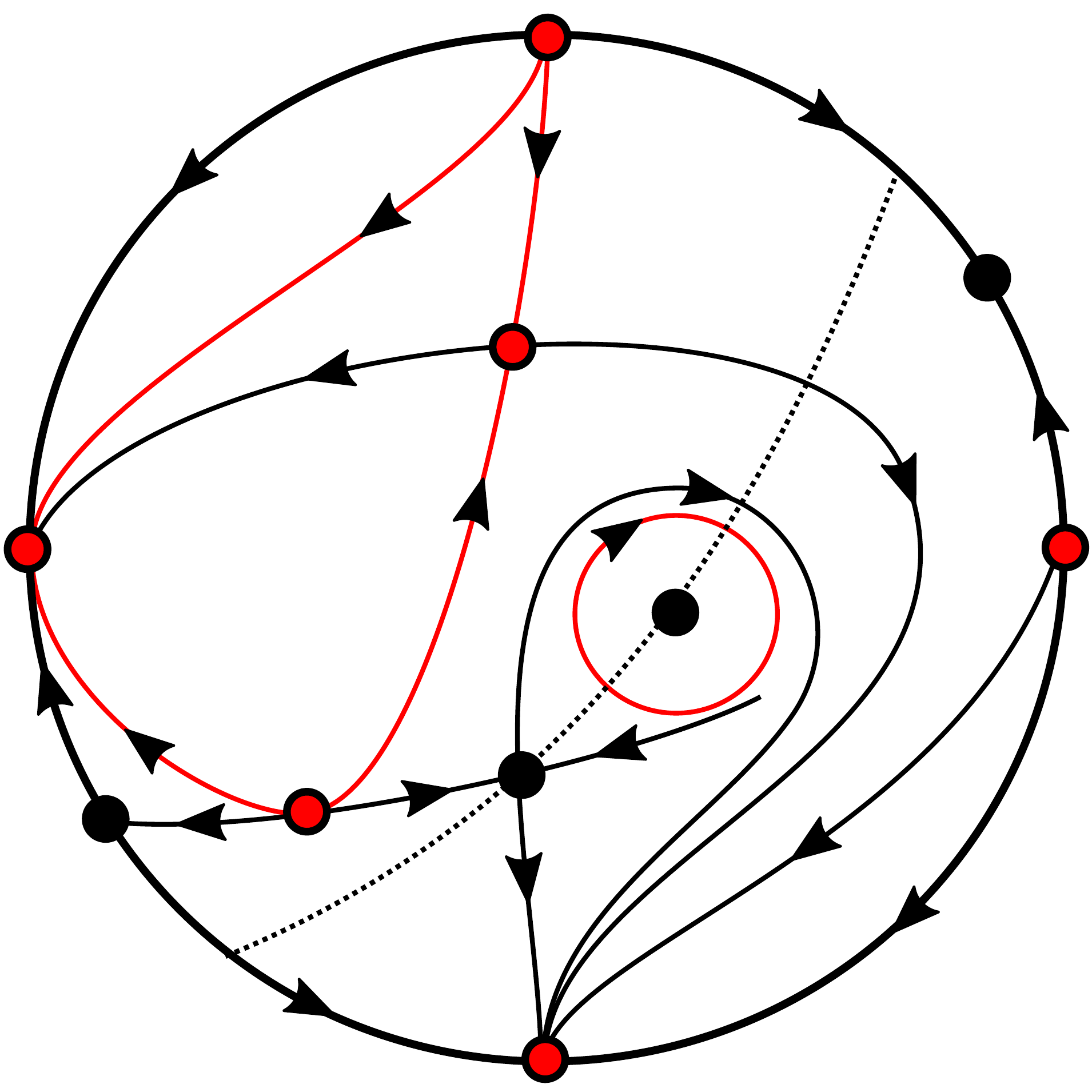}&\includegraphics[width=3.5cm]{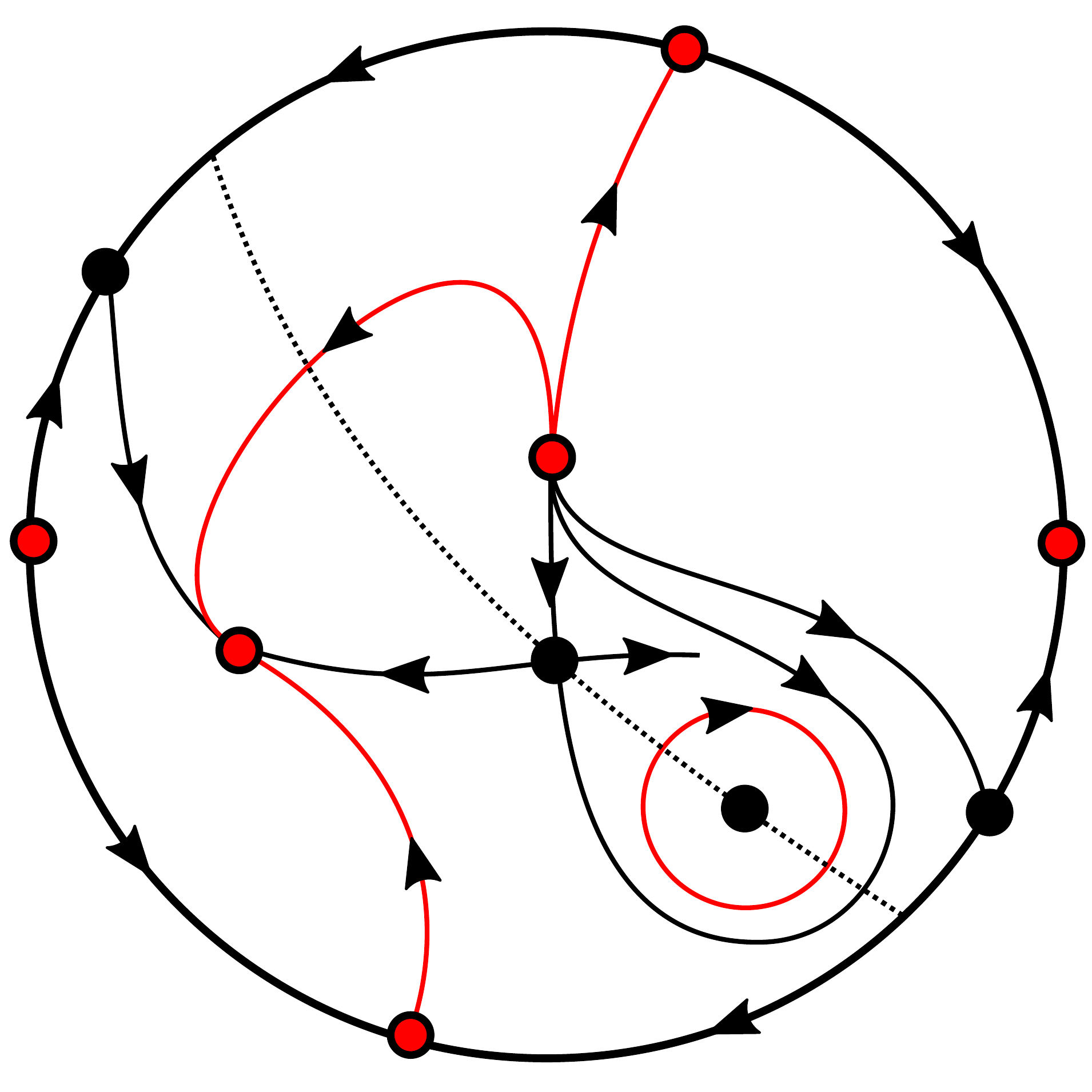}&\includegraphics[width=3.5cm]{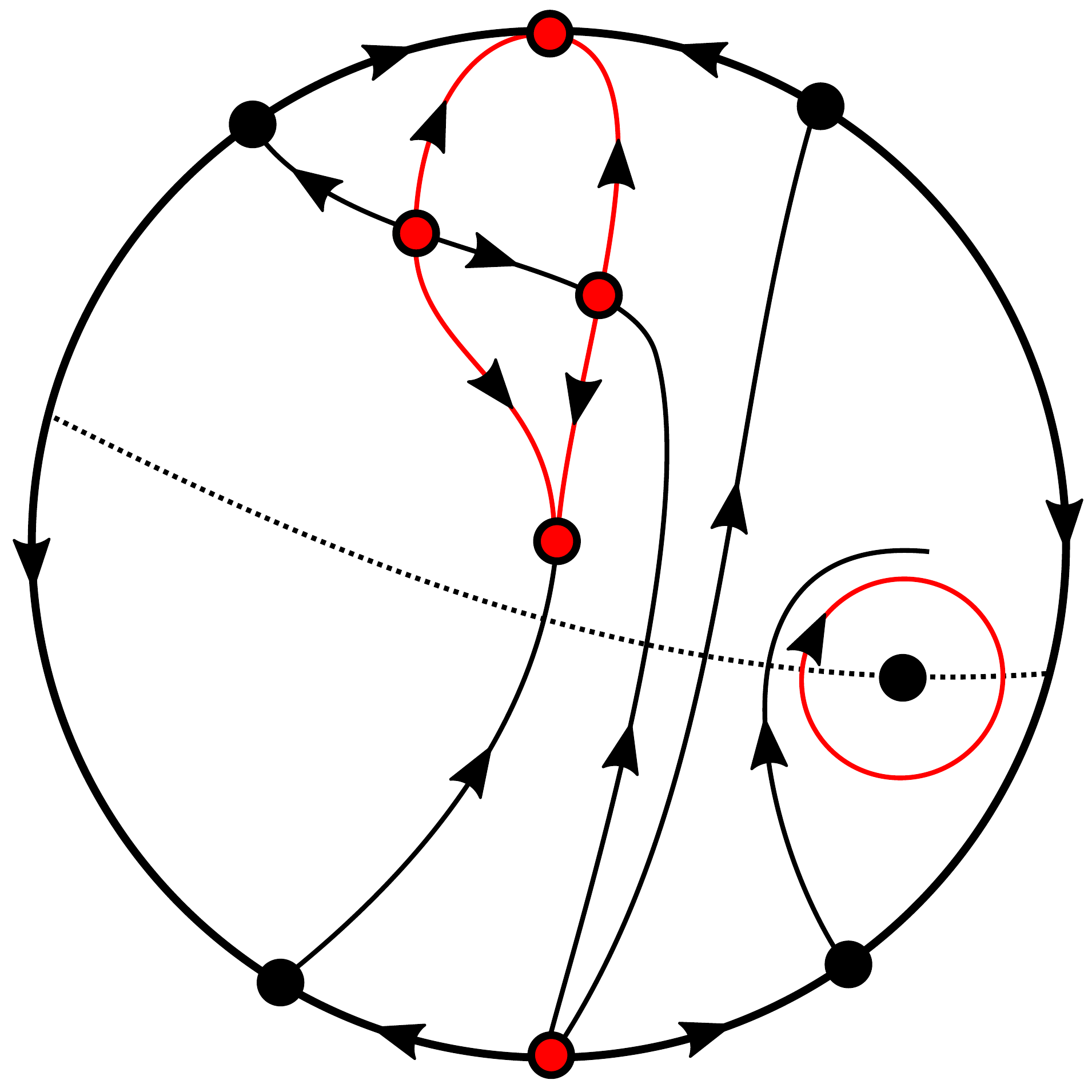}\cr
CLS&CLS5&CLS6
\end{tabular}
\caption{\footnotesize Phase portraits of the known quadratic differential systems having an algebraic limit cycle in the Poincar\'e disk. The red lines correspond to the invariant algebraic curves. The dashed lines correspond to the cofactors.}\label{F.PP}
\end{figure}

\section{Plane Cremona maps}\label{S.PVF+Cre}

\subsection{Projective vector fields}\label{S.PVF}

We shall use projective coordinates for our purposes, so we introduce in this section some basic notions on projective vector fields.

\smallskip

Let $A$, $B$ and $C$ be homogeneous polynomials of degree $m+1$ in the variables $X$,
$Y$ and $Z$. The homogeneous 1-form
\[
\omega=A\,dX+B\,dY+C\,dZ
\]
is said to be {\it projective} if $XA+YB+ZC=0$, that is, if there exist $L,M,N$ homogeneous polynomials of degree $m$ such that
\begin{equation}\label{LMN}
A=MZ-NY,\quad B=NX-LZ,\quad C=LY-MX.
\end{equation}
The triple $L, M, N$ can be thought as a homogeneous polynomial vector field in $\mathbb C^3\setminus\{0\}$ of degree $m$, which passing to $\CP^2$ provides the  projective {\it foliation} of {\it degree} $m$ defined by $\omega$, which we denote by $\mathcal F$. Equivalently, we will say that $(L, M, N)$ is a  polynomial vector field in $\CP^2$ of degree $m$.

\smallskip

The following result is well known, see \cite{D}.
\begin{lemma}\label{L.W}
If we take $\bar L=L+XW$, $\bar M=M+YW$ and $\bar N=N+ZW$, with $W$ a homogeneous polynomial of degree $m-1$, then the 1-form $\omega$ remains invariant.
\end{lemma}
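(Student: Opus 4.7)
The plan is to verify the lemma by a direct substitution into formulas \eqref{LMN}. Given the perturbed triple $(\bar L,\bar M,\bar N)=(L+XW,\,M+YW,\,N+ZW)$, I would form the associated coefficients $\bar A=\bar M Z-\bar N Y$, $\bar B=\bar N X-\bar L Z$, $\bar C=\bar L Y-\bar M X$, and check that each one coincides with the corresponding original coefficient. For the first one,
\[
\bar A=(M+YW)Z-(N+ZW)Y=MZ-NY+(YZW-YZW)=A.
\]
The same cancellation occurs for $\bar B$ and $\bar C$: in every case the two ``extra'' terms contain the same pair of homogeneous coordinates multiplying $W$, with opposite signs, so they vanish. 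Hence $\bar A=A$, $\bar B=B$, $\bar C=C$, and therefore $\bar\omega=\omega$.

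The conceptual reason behind this cancellation is that the assignment \eqref{LMN} realises $(A,B,C)$ as the ``cross product'' $(X,Y,Z)\times(L,M,N)$, which annihilates any vector proportional to the radial direction $(X,Y,Z)$. Since the modification adds precisely the radial vector $W\cdot(X,Y,Z)$ to $(L,M,N)$, it lies in the kernel of this operation and the 1-form $\omega$ is unaffected. As a sanity check, the perturbed triple still defines a projective 1-form, since
\[
X\bar A+Y\bar B+Z\bar C=XA+YB+ZC=0.
\]

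There is no real obstacle: the statement reduces to a one-line algebraic identity. The only subtlety worth pointing out explicitly in the write-up is the geometric interpretation above, which makes transparent the fact that the gauge freedom described by the lemma has degree $m-1$ because $W$ must homogenise $(XW,YW,ZW)$ to match the degree $m$ of $(L,M,N)$.
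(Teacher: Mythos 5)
Your proof is correct: the direct substitution into \eqref{LMN} gives the cancellation exactly as you claim, and the ``cross product'' interpretation is the right conceptual picture. The paper itself omits the proof, citing the result as well known, and your one-line verification is precisely the standard argument.
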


Lemma \ref{L.W} tells us that $(\bar L,\bar M,\bar N)$ defines also $\omega$, i.e., $A=\bar MZ-\bar NY$, $B=\bar NX-\bar LZ$, $C=\bar LY-\bar MX$.

\smallskip

The singular points $p$ of $\omega$ (or of $\mathcal F$) are those satisfying the system of equations $A(p)=B(p)=C(p)=0$. The following result gives un upper bound for the number of singular points, see \cite{D} again.

\begin{proposition}
The number of singular points of any homogeneous polynomial vector field $(L, M, N)$ in $\CP^2$, with $L, M, N$ coprime of degree $m$, having finitely many singular points is at most $m^2+m+1$.
\end{proposition}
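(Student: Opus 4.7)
The plan is to apply Bezout's theorem to the two homogeneous polynomials $A$ and $B$ that, together with $C$, cut out $\Sing(\mathcal F) = V(A) \cap V(B) \cap V(C)$, and then to discount the $m$ intersections of $V(A) \cap V(B)$ that lie at infinity and fail to belong to $V(C)$.

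First I would change homogeneous coordinates so that (i) no point of $\Sing(\mathcal F)$ lies on the line $L_\infty := \{Z=0\}$; (ii) $L_\infty$ is not invariant by $\mathcal F$, equivalently $N(X,Y,0) \not\equiv 0$ as a form of degree $m$; and (iii) the roots of $N(X,Y,0)$ on $L_\infty \cong \CP^1$ avoid the coordinate points $[1:0:0]$ and $[0:1:0]$. Since $\Sing(\mathcal F)$ is finite by hypothesis and (ii)--(iii) are generic conditions on coordinate changes, such a choice exists.

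Next I would prove $\gcd(A,B) = 1$. The identity $(A,B,C) = (L,M,N) \times (X,Y,Z)$ shows that any irreducible common factor $D$ of $A,B,C$ would force $(L,M,N) \parallel (X,Y,Z)$ along the whole curve $V(D)$, so $V(D) \subseteq \Sing(\mathcal F)$, contradicting finiteness; thus $\gcd(A,B,C) = 1$. From $XA + YB + ZC = 0$, any irreducible common factor of $A$ and $B$ divides $ZC$; since it cannot divide $C$ it must equal $Z$, which is excluded by (ii) (otherwise $Z \mid MZ - NY$ forces $Z \mid N$, i.e., $N(X,Y,0) \equiv 0$). Bezout's theorem then gives $|V(A) \cap V(B)| = (m+1)^2$ counted with multiplicity. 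On $L_\infty$ we have $A|_{Z=0} = -N(X,Y,0) Y$ and $B|_{Z=0} = N(X,Y,0) X$, so their common zeros are exactly the zeros of $N(X,Y,0)$, contributing $m$ points with multiplicity. By (i) none of these lies in $V(C)$, so they are excluded from $\Sing(\mathcal F)$; subtracting yields $|\Sing(\mathcal F)| \leq (m+1)^2 - m = m^2 + m + 1$.

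The most delicate step is the multiplicity count at infinity: one must verify that the total intersection multiplicity of $V(A)$ and $V(B)$ at the points of $V(N(X,Y,0)) \cap L_\infty$ is exactly $m$ (and not more). Condition (iii) is what makes this clean: each such point has $X_0 Y_0 \neq 0$, so in an affine chart like $Y = 1$ a local computation shows that the intersection multiplicity of $A$ and $B$ at the point matches the vanishing order of $N(X,Y,0)$ there. The bookkeeping is routine but deserves care, as the shared factor of $N(X,Y,0)$ in both $A|_{Z=0}$ and $B|_{Z=0}$ could otherwise inflate the count.
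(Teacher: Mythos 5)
The paper does not actually prove this proposition; it only cites Darboux. Your Bezout argument is correct and self-contained, and it is essentially the classical route to this bound. The one step you flag as delicate but do not carry out — that the intersection multiplicity of $V(A)$ and $V(B)$ at a point $p_0$ of $L_\infty$ with $N(X_0,Y_0,0)=0$ equals the vanishing order of $N(X,Y,0)$ there — closes cleanly from the Euler relation: in the chart $Y=1$ one has $b+xa=-zc$ with $c=C(x,1,z)$, and since by (i) the point is not singular, $c(p_0)\neq 0$, whence $I_{p_0}(a,b)=I_{p_0}(a,zc)=I_{p_0}(a,z)=\oo_{x_0}N(x,1,0)$; summing over the roots of the degree-$m$ binary form $N(X,Y,0)$ gives exactly $m$, as you need. (Condition (iii) is actually superfluous, since every point of $L_\infty$ lies in one of the two charts $X=1$ or $Y=1$.) For comparison, the more commonly written version of this same Bezout count works in an affine chart: the finite singular points are among the $m^2$ common zeros of $p$ and $q$, and the singular points at infinity are among the $m+1$ zeros of the degree-$(m+1)$ binary form $Xq_m-Yp_m$ (with $p_m,q_m$ the top-degree parts), giving $m^2+m+1$ directly without the correction term; your version trades that decomposition for a subtraction at infinity, but the two are equivalent in substance.
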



Let $F$ be a homogeneous polynomial of degree $n$ in $\CP^2$. We say that $F=0$ is an {\it invariant algebraic curve} of $\omega=0$ if
\begin{equation}\label{Fomega}
L\pd FX+M\pd FY+N\pd FZ=KF,
\end{equation}
where $K$ is a polynomial of degree $m-1$, called the {\it cofactor} of $F$. Euler's theorem for any homogeneous polynomial of degree $n$ gives the relation
\[
X\pd FX+Y\pd FY+Z\pd FZ=nF.
\]
From the above relation and from \eqref{Fomega} we have
\begin{equation}\label{FK}
\pd FX\left(L-\frac{KX}n\right)+\pd FY\left(M-\frac{KY}n\right)+\pd FZ\left(N-\frac{KZ}n\right)=0.
\end{equation}

\begin{remark}\label{R.cof0}
Taking $\bar L=L-KX/n$, $\bar M=M-KY/n$ and $\bar N=N-KZ/n$ we have that the cofactor of an invariant algebraic curve is zero for $\omega=\bar L(YdZ-ZdY)+\bar M(ZdX-XdZ)+\bar N(XdY-YdX)$.
\end{remark}

The affine quadratic vector field \eqref{e1} can be thought in $\CP^2$ as the projective 1-form
\[
Z^4\left[-q\left(\frac XZ,\frac YZ\right)\frac{ZdX-XdZ}{Z^2}+p\left(\frac XZ,\frac YZ\right)\frac{ZdY-YdZ}{Z^2}\right]=0
\]
of degree 3, where we have taken $(x,y)=(X/Z,Y/Z)$.
Indeed, taking $P(X, Y, Z)=Z^2p(X/Z, Y/Z)$ and $Q(X, Y, Z)=Z^2q(X/Z, Y/Z)$, this 1-form writes as
\begin{equation}\label{Eq.1form}
\omega=-ZQdX+ZPdY+(XQ-YP)dZ.
\end{equation}
The affine differential system \eqref{e1} is equivalent to the projective one defined by the  1-form \eqref{Eq.1form} of $\CP^2$. This is called the {\it projectivization} of  \eqref{e1}.
Observe that the degree of the affine system \eqref{e1} and the degree of its projectivization coincide.

\smallskip

I will be convenient for our purpose to work with the projective extension of an initial affine differential system, since we will apply to it a projective birational transformation, also known as Cremona map. At the end we will need to derive a new affine differential system from the transformed projective one. Hence we will study next the reverse operation of the projectivization.

\smallskip

From a foliation complex projective $\mathcal{F}$ of degree $m$  defined by the projective 1-form $\omega=A\,dX+B\,dY+C\,dZ$ in some suitable projective coordinate system, one easily restricts in the affine chart $Z\neq 0$  to the affine 1-form $A(x,y,1)\, dx + B(x,y,1) \, dy$ and hence to the affine differential system $\dot x= B(x,y,1),\quad \dot y= - A(x,y,1)$. The affine restriction of a projective differential system is the reverse operation of the projectivization, and reciprocally. However, notice that the degree of the affine restriction of a given projective differential system may have increased in one unit. Since the scope of this paper is dealing  with quadratic differential systems, the invariance of the degree through this operation is an important issue to handle with.

\smallskip

\begin{proposition}\label{P.InvariantLine}
A complex projective foliation  $\mathcal{F}$ of degree $m$ which is defined by a projective 1-form $\omega=A\,dX+B\,dY+C\,dZ$ restricts to an affine differential system of the same degree $m$ if and only if $\mathcal{F}$ has an invariant line $\mathcal L$, that is, there is a subpencil of the net $\{ a A + b B + c C=0 : a,b,c \in \mathbb{C} \}$ having $\mathcal L$ as a common factor.
\end{proposition}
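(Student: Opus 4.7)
The plan is to reduce both sides of the claimed equivalence to the same divisibility condition on the coefficients of $\omega$. Since the foliation degree, the degree of the affine restriction to a given chart, and the existence of an invariant line are projectively invariant, I would first choose projective coordinates so that the candidate invariant line is the line at infinity $\mathcal L=\{Z=0\}$ of the affine chart in question. Under this normalization it then suffices to prove
\[
\omega|_{Z=1}\text{ has affine degree }m \iff Z\mid A\text{ and }Z\mid B.
\]

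For the degree of the affine restriction, observe that the affine system corresponding to $\omega|_{Z=1}$ is $\dot x=B(x,y,1)$, $\dot y=-A(x,y,1)$, and since $A,B$ are homogeneous of degree $m+1$, each of $A(x,y,1)$ and $B(x,y,1)$ has degree at most $m+1$ in $(x,y)$. Its top-degree piece in $(x,y)$ is $A(X,Y,0)$, which vanishes iff $Z\mid A$, and likewise for $B$. Hence the affine degree is $\leq m$ iff $Z\mid A$ and $Z\mid B$. To see the affine degree cannot be strictly smaller, I would use that projectivizing an affine system of degree $m'$ returns $\mathcal F$ represented by a 1-form whose coefficients have degree $m'+1$; since the minimal-degree representation of $\mathcal F$ is of degree $m+1$, necessarily $m'=m$.

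For the invariance and subpencil condition, the projectivity relation $XA+YB+ZC=0$ evaluated at $Z=0$ gives $X\,A(X,Y,0)+Y\,B(X,Y,0)=0$, so there is a homogeneous polynomial $G$ of degree $m$ with $A(X,Y,0)=YG$ and $B(X,Y,0)=-XG$, and $\omega|_{Z=0}=G(Y\,dX-X\,dY)$. Thus invariance of $\mathcal L=\{Z=0\}$, namely $\omega|_{\mathcal L}\equiv 0$, amounts to $G=0$, i.e.\ to $Z\mid A$ and $Z\mid B$. Furthermore the triples $(a,b,c)\in\C^3$ with $Z\mid aA+bB+cC$ are precisely those annihilating the linear functional $(a,b,c)\mapsto c\,C(X,Y,0)$; since $\mathcal F$ has degree exactly $m$, the polynomials $A,B,C$ share no common factor, so $C(X,Y,0)\neq 0$ and these triples form exactly the $2$-dimensional subpencil spanned by $A$ and $B$, which has $\mathcal L$ as common factor, as required.

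The main obstacle I anticipate is this last point: the hypothesis that the foliation has degree exactly $m$ (not merely at most $m$) is what forces $C(X,Y,0)\neq 0$ and guarantees that the space of multiples of $\mathcal L$ inside the net is a genuine subpencil rather than the whole net; otherwise $Z$ would be a common factor of all of $A,B,C$, giving a representation of $\mathcal F$ by coefficients of degree $m$ and dropping the foliation degree to $m-1$, a contradiction.
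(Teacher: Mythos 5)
Your proof is correct and pivots on the same divisibility condition as the paper's ($Z\mid A$ and $Z\mid B$ after normalizing $\mathcal L=\{Z=0\}$), but you reach it by a noticeably different route on the ``invariance'' side. The paper works through the vector-field representative $(L,M,N)$ of \eqref{LMN} and the cofactor equation \eqref{Fomega}: invariance of $Z=0$ gives $N=KZ$, hence $A=ZA_1$, $B=ZB_1$; conversely, vanishing of the top-degree parts $A_2(X,Y)$, $B_2(X,Y)$ gives the divisibility and then invariance is read back off \eqref{Fomega}. You instead stay entirely with $(A,B,C)$, use the Euler relation $XA+YB+ZC=0$ at $Z=0$ to write $A(X,Y,0)=YG$, $B(X,Y,0)=-XG$, and characterize invariance as vanishing of the pullback $\omega|_{\mathcal L}=G(Y\,dX-X\,dY)$; this is equivalent to the paper's cofactor definition (for $F=Z$ the cofactor equation reads $Z\mid N$, which is equivalent to $Z\mid A$ and $Z\mid B$). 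Your version buys two refinements the paper glosses over: the argument that the restricted degree cannot drop \emph{below} $m$ (via minimality of the degree-$(m+1)$ representation, which indeed rests on $A,B,C$ having no common factor), and the exact identification of the subpencil as $\langle A,B\rangle$ using $C(X,Y,0)\neq 0$. One small loose end: since the proposition's ``that is'' clause makes the subpencil condition the operative definition of invariance, you should also record the implication (subpencil with common factor $\mathcal L$) $\Rightarrow$ ($Z\mid A$ and $Z\mid B$); this is immediate from your own computation, because if $G\neq 0$ then $XG$ and $YG$ are linearly independent, so the space of $(a,b,c)$ with $(aY-bX)G+cC(X,Y,0)=0$ has dimension at most one and cannot contain a subpencil.
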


Observe that in the case of the foliation defined by the form \eqref{Eq.1form}, which is the projectivization of \eqref{e1}, the invariant line is $Z=0$.

\begin{proof}
Suppose first that $\mathcal{F}$ has an invariant line $\mathcal L$, which can be written as $\mathcal L = \{ Z=0 \}$ by changing the projective coordinate system to a suitable one.
From \eqref{Fomega}  we infer $N=K Z$, with $K$ homogeneous of degree $m-1$. Substituting in \eqref{LMN} we obtain $A= Z A_1$, $B= Z B_1$ with $A_1$, $B_1$ homogeneous of degree $m$, that is, the subpencil $\{ a A + b B =0 : a,b \in \mathbb{C} \}$ is the one having $\mathcal L$ as common factor.
Now, when taking the affine chart $Z\neq 0$, $\omega$ restricts to the affine 1-form $A_1(x,y,1)\, dx + B_1(x,y,1) \, dy$ of degree $m$.

Conversely, suppose  $\omega=A\,dX+B\,dY+C\,dZ$, with $A$, $B$, $C$ homogeneous of degree $m+1$ restricts to $A(x,y,1)\, dx + B(x,y,1) \, dy$ with $A(x,y,1)$ and $B(x,y,1)$ of degree $m$. Writing
\begin{eqnarray*}
  A(X,Y,Z) &=& Z A_1(X,Y,Z) + A_2(X,Y) \, , \\
  B(X,Y,Z) &=& Z B_1(X,Y,Z) + B_2(X,Y) \, ,
\end{eqnarray*}
where $A_1$, $B_1$ are homogeneous in $X, Y, Z$ of degree $m$, and $A_2$, $B_2$ are homogeneous in $X, Y$ of degree $m+1$, the hypothesis implies that $A_2$, $B_2$ vanish identically. Therefore, the subpencil $\{ a A + b B =0 : a,b \in \mathbb{C} \}$ has $ Z=0 $ as common factor and we see from \eqref{Fomega} and \eqref{LMN} that  $\mathcal L= \{ Z=0 \}$ is an invariant line of $\mathcal{F}$.
\end{proof}


If we have a foliation $\mathcal F$ with an invariant line $\mathcal L$ and we want to obtain the corresponding affine differential system, we proceed as follows: first we compute $L,M,N$ from $A,B,C$ using \eqref{LMN} and use Lemma \ref{L.W} to obtain $\bar L,\bar M,\bar N$. $W$ is a homogeneous polynomial of degree $m-1$  to be fixed.


\begin{remark}\label{R.InvLine}
In the case of quadratic foliations invariant lines are easy to find.
From Remark \ref{R.cof0} an invariant line $\mathcal L:=a_1X+a_2Y+a_3Z=0$ can be seen as an invariant algebraic curve with null cofactor. So we have the equation
\begin{equation}\label{rectainv}
a_1\bar L+a_2\bar M+a_3\bar N=0.
\end{equation}
Solving this equation, which can be written as a system of equations with unknowns $a_1,a_2,a_3$ and the coefficients of $W$, we can obtain all the invariant lines of $\mathcal F$.
\end{remark}


\subsection{Local invariants}\label{S.LInv}

In a local setting, let $\omega $ be a holomorphic 1-form generating a holomorphic foliation $\mathcal{F}$ on a smooth surface $S$ on a neighborhood of a point $p \in S$, and let $\pi_p: S' :=\textrm{Bl}_{p}S \rightarrow S$ be the blow-up at $p$ with exceptional divisor $E_p= \pi_p^{-1} (p)$. The points of $E_p$ are called points in the \emph{first (infinitesimal) neighborhood} of $p$.
Taking local coordinates $x$, $y$ centered at $p$, the local ring $\mathcal{O}_{S,p}$ of germs of holomorphic functions in a neighborhood of $p$ is identified with $\C\{x,y\}$ (the ring of convergent power series in $x$ and $y$), and we write $\mathfrak{m}_p =(x,y)$ for the maximal ideal of $\mathcal{O}_{S,p}$. Suppose the foliation $\mathcal{F}$ is given by
\begin{equation}\label{Eq.hol1form}
\omega = a(x,y) dx + b(x,y) dy =0,
\end{equation}
with $a,b \in\C\{x,y\}$, that is, $\mathcal{F}$ is defined locally by the vector field $-b(x,y)\frac{\partial}{\partial x} + a(x,y)\frac{\partial}{\partial y}$.
The point $p$ is \emph{singular} if $a(p)=b(p)=0$.
Attached to each singular point $p \in \Sing(\mathcal{F})$ we consider two local invariants of $\mathcal{F}$: the \emph{algebraic multiplicity} $m(p, \mathcal{F})$ and the \emph{vanishing order} $l(p, \mathcal{F})$ of the pullback $\pi_p^{\ast} \omega $ over the exceptional divisor $E_p$.
Namely,
 \begin{itemize}
   \item $m(p, \mathcal{F}) = \min \{ \oo_p(a),  \oo_p(b) \}$, where $\oo_p (f)$ is the degree of the initial term of $f \in \mathbb{C}\{x,y\}$. In other words, $\oo_p $ is the $\mathfrak{m}_p$-adic order. Note that this definition can be extended to any $p \not\in \Sing(\mathcal{F})$ as $m(p, \mathcal{F}) = 0$;
   \item $l(p, \mathcal{F})=l$ satisfies that $\pi_p^{\ast} \mathcal{F} $ is defined locally at any $q \in  E_p$ by the 1-form $z^{-l} \pi_p^{\ast} \omega $, where $z$ is any equation for the exceptional divisor $E_p$ near $q$. Namely, either $l(p, \mathcal{F})= m(p, \mathcal{F}) +1$ when the exceptional divisor $E_p$ is not invariant by $\pi_p^{\ast} \mathcal{F} $ (in which case it is said that $p$ is \emph{dicritical}), or $l(p, \mathcal{F})= m(p, \mathcal{F})$ otherwise.
 \end{itemize}

We recall that a {\it star-node} is a singular point of a differential system whose Jacobian matrix is, up to a multiplicative constant, the identity. This is actually the simplest example of a dicritical singular point.

\smallskip

We introduce well-known analogous notions for a holomorphic germ of curve $C:f=0$ at the point $p \in S$ with $f \in \mathbb{C}\{x,y\}$: the \emph{multiplicity} of $C$ (or of $f$) at $p$ is  $m(p, C)= m(p, f) := \oo_p (f)$, and the \emph{value} $l(p, C)= l(p, f)$ of $C$ (or of $f$) at $p$ is the vanishing order of the pullback $\pi_p^{\ast} f $ over the exceptional divisor $E_p$, which equals the multiplicity $m=m(p, C)$ (see for instance \cite[3.2.1]{casas}). The \emph{strict transform} $\widetilde{C}$ of $C$ by $\pi_p$ is then defined locally at any $q \in  E_p$ by $z^{-m} \pi_p^{\ast} f$, where $z$ is any equation for the exceptional divisor $E_p$ near $q$. The interest in distinguishing between these two concepts, multiplicity and value, will become apparent when extending them to further blow-ups.

\smallskip

Let $p$ be an \emph{infinitely near} point in $S$, that is, a point lying on a surface $p \in S' $ obtained from $S$ after a sequence $\pi : S' \xrightarrow{} S$ of  blow-ups at $\sigma$ points belonging to a set $K$. A point $q \in K$ is said to \emph{precede} $p$ if $p$ belongs to the to the pullback on $S'$ of the exceptional divisor $E_q$ of the blow-up of $q$, that is, the blow-up of $q$ is needed in order to obtain $p$. The set $K$ is called \emph{cluster} of (infinitely near) points and satisfies that for any $p \in K$ $K$ contains all the points preceding $p$.
Sometimes the points lying on a surface $S$ will be called \emph{proper} points of $S$ in order to stress their difference to the infinitely near ones. If $p$ belongs to the total exceptional divisor $\pi^{-1} (O)$ of some proper point $O \in S$, we say that $p$ is \emph{infinitely near to} $O$; if $\sigma$ is the minimal number of blow-ups that are needed to obtain $p$ from $O$, then we say that $p$ lies in the $\sigma-$\emph{th (infinitesimal) neighborhood} of $O$.
%
We extend at $p$ the notions of \emph{algebraic multiplicity} $m(p, \mathcal{F}):= m(p, \pi^{\ast} \mathcal{F})$ and \emph{vanishing order} $l(p, \mathcal{F}):= l(p, \pi^{\ast} \mathcal{F})$ of a foliation $\mathcal{F}$ on $S$, and of \emph{multiplicity} $m(p, C)= m(p, f) := m(p, \widetilde{C})$  and \emph{value} $l(p, C)= l(p, f) := l(p, \pi ^{\ast} f)$ of a curve $C: f=0$ on $S$.
We say that the point $p$ is \emph{singular} (respectively, \emph{simple}) for $\mathcal{F}$ if  $m(p, \mathcal{F})> 0$ (respectively, $m(p, \mathcal{F})=1$). It holds that the vanishing order of the pullback $(\pi \circ \pi_p)^{\ast} f $ over the exceptional divisor $E_p$ equals $l(p, C)$. We say that the point $p$ lies on $C$ if $m(p, C)\neq 0$.

\begin{remark}\label{R.UniversalPropertyBlowingUp} 
Notice that these notions being invariant by local isomorphism and by using the universal property of blowing-up (\cite[3.3]{casas}), $m(p, \mathcal{F})$, $l(p, \mathcal{F})$, $m(p, C)$ and $l(p, C)$ are independent on the number $\sigma$ of blowing-ups that are performed to reach $p$ from $O$.
\end{remark}

\smallskip

The set $K$ of points (which is a union of clusters) which have been blown up to obtain $\pi : S' \xrightarrow{} S$ gives a parameterization of the set of (irreducible) exceptional components $\{ E_{p_i} \}_{p_i \in K}$ on $S'$. By a slight abuse of notation, we keep denoting by $E_p$ the strict transforms (on all intermediate surfaces) of the exceptional divisor of the blow-up at $p \in K$. We may establish a \emph{proximity relation} between the points in $K$. Namely, we say that a point $q\in K$ is \emph{proximate} to $p\in K$ if and only if $q$ belongs (as proper or infinitely near point) to the exceptional divisor $E_p$. We will denote this relation as $q\rightarrow p$. Since the total exceptional divisor of $\pi $, $E_{p_1 }+ \cdots + E_{p_{\sigma}}$, is a normal crossings divisor on $S'$ (i.e. any pair of non-disjoint components intersect transversally and there are no more than two components meeting at a point), any non-proper point $p \in K$ is proximate to at most two other points in $K$: if it is proximate to just one point, $p$ is called \emph{free}, and it is called \emph{satellite} otherwise.

\smallskip

A cluster $K$ of infinitely near points to some proper point $O \in S$ is described by means of an Enriques diagram, the proper points are represented by black-filled circles and the infinitely near ones are represented by grey-filled circles. These conventions will be used throughout this work for all the pictures depicting clusters.
An {\em Enriques diagram} is a tree, rooted on the proper point $O$, whose vertices are identified with the points in $K$, and there is an edge between $p$ and $q$ if and only if $p$ lies on the first neighborhood of $q$ or vice-versa. Moreover, the edges are drawn (as dotted arcs by convention in this work) according to the following rules:
\begin{itemize}
\item If $q$ is proximate to just one point $p$, the edge joining $p$ and $q$ is curved and, if $p \neq O$, it is tangent to the edge ending at $p$.
\item If $p$ and $q$ ($q$ in the first neighborhood of $p$) have been represented, the rest of points proximate to $p$ arising in successive blow ups are proximate to exactly two points, and they are represented on a straight half-line starting at $q$ and orthogonal to the edge ending at $q$.
\end{itemize}

The multiplicities of a curve $C$ on $S$ satisfy the so called \emph{proximity equalities} at any proper or infinitely near point $p$ (\cite[3.5.3]{casas}):
\begin{equation}\label{Eq.proxeq}
m(p,C)= \sum_{q \rightarrow p} m(q,C) ,
\end{equation}
and they are related to the values by means of the formulae (\cite[4.5.1]{casas}):
\begin{equation}\label{Eq.multvalues}
l(p,C)= m(p,C) + \sum_{p \rightarrow q} l(q,C).
\end{equation}

\begin{remark}\label{R.Seiden}
Notice that from \eqref{Eq.proxeq} it follows that the multiplicities of a curve do not increase on further blowing-ups. By a result of Seidenberg \cite{S} the same holds for foliations: if $ q \rightarrow p$ then $m(p,\mathcal{F}) \geq m(q,\mathcal{F})$.
\end{remark}

Now, returning to our setting of a projective foliation $\mathcal{F}$ on the plane $\CP^2$, $\CP^2$ will play the role of the aforementioned surface $S$: by taking suitable affine charts the algebraic projective 1-form \eqref{Eq.1form} can be written locally as an holomorphic 1-form as in \eqref{Eq.hol1form}.

\subsection{Plane Cremona maps}\label{SS.Cre}

A plane Cremona map is a birational map between two complex projective planes $\Phi : \CP^2_1 \dashrightarrow \CP^2_2$. There is a largest (Zariski-open) subset $U \subseteq \CP^2_1$ where the map $\Phi $ is defined. It satisfies that $F_{\Phi }=\CP^2_1 - U$ is a finite set of points, called \emph{fundamental points} of $\Phi $. Once projective coordinates are fixed in both planes, $\Phi $ is defined by three homogeneous polynomials $H_1$, $H_2$, $H_3$ of degree $n$ in the variables $X$, $Y$, $Z$, with no common factor. The linear system $\mathcal{H}= \{ a_1 H_1 + a_2 H_2+ a_3 H_3=0 : a_i \in \mathbb{C} \}$ defining $\Phi $ is called \emph{homaloidal net}, its members are called \emph{homaloidal curves}, and $d(\mathcal{H})=n$ is called the \emph{degree} of the map $\Phi $. It is worth noticing that this notion of degree of the homaloidal net differs from the degree of $\Phi $ as a rational map, this latter being always one, since it is generically one-to-one. Next we shall present some basic notions and results about plane Cremona maps relevant to this work and we refer the interested reader to \cite{A-C2002} for a deeper insight.

\smallskip

Any plane Cremona map $\Phi $ factorizes as the blow up of a sequence of $\sigma$ points
 $${\displaystyle \pi:S=S_{\sigma}\xrightarrow{}S_{\sigma-1}\xrightarrow{}\cdots \xrightarrow{}S_0=\CP^2_1}$$ with $S_{i+1} = \textrm{Bl}_{p_{i+1}}S_i$ for a point $p_{i+1} \in S_i$,
followed by the blow downs of a sequence of $\sigma$ (-1)-curves (curves with autointersection equal to $-1$)
$${\displaystyle \pi' :S=S'_{\sigma}\xrightarrow{}S'_{\sigma -1}\xrightarrow{}\cdots \xrightarrow{}S'_0=\CP^2_2}$$ with $S'_{i+1} = \textrm{Bl}_{q_i}S'_i$ for a point $q_{i+1} \in S'_i$, that is, the contraction of the (-1)-curve $E'_{q_{i+1}}$, which is also the exceptional divisor of the blow up at $q_{i+1}$ (see \cite[Section 2.1]{A-C2002}). Thus we have the equality of birational maps $\Phi = \pi' \circ  \pi^{-1}$, where $\pi $ and $\pi '$ are morphisms and hence they are denoted by an arrow, whereas birational maps, as $\Phi $, are denoted by a broken arrow (meaning that it is not defined everywhere).
Whenever $\sigma$ is minimal, the set $K = \{ p_1, \ldots p_{\sigma} \}$ of points which have been blown up is called the cluster \emph{base points} of $\Phi $. Then the set $K'= \{ q_1, \ldots q_{\sigma} \}$ is the cluster of base points of $\Phi ^{-1}$. Observe that the fundamental points of $\Phi $, being proper planar points, are included into the base points. Namely, the proper points in $K$ are those of $F_{\Phi }$.
Notice that some coincidences between the exceptional curves of $\pi$ and $\pi'$ may occur, namely, $E_{p_i} = E'_{q_{j}}$ for some pairs of indexes $(i,j)$. In this case, the base point $p_i$ of $\Phi $ is called \emph{non-expansive}, and accordingly $q_j$ is a non-expansive base point of $\Phi ^{-1}$. The rest of base points lacking this property are called \emph{expansive}.

\smallskip

Whenever $\sigma$ is minimal, the proper or infinitely near points of $S$ are in bijection through $\pi$ to the proper or infinitely near points of $\CP^2_1$ which are not base points of $\Phi $, and this correspondence will be denoted by $\pi _{\ast}$ or $(\pi ^{-1})_{\ast}$. Consider a proper or infinitely near point $p$ of $\CP^2_1$, and suppose $(\pi ^{-1})_{\ast}(p)$ is equal or infinitely near to the proper point $p'$ of $S$, then its image by $\Phi $ is a well defined proper point $\Phi (p) = \pi' (p')$ in $\CP^2_2$. Likewise we also state a finer notion, $\Phi _{\ast}(p)= \pi '_{\ast} (\pi ^{-1})_{\ast}(p)$, which is the point $(\pi ^{-1})_{\ast}(p)$ in $S$ regarded as a proper or infinitely near point of $\CP^2_2$ through $\pi'$. Observe that $\Phi _{\ast}(p)$ is equal or infinitely near to the point $\Phi (p)$.

\smallskip

Now, we will weight each base point $p \in K$ of $\Phi $ by a non-negative integral value $m (p, \mathcal{H})$ deeply related to the homaloidal net $\mathcal{H}$ as follows.
If $p$ is a proper point of $S:=\CP^2_1 $, without loss of generality we may assume that $p$ lies on the affine chart given by $Z\neq 0$ (otherwise perform a projective coordinate change), and write $x= \frac{X}{Z}$, $y=\frac{Y}{Z}$. Define the \emph{multiplicity} of the net $\mathcal{H}$ at $p$ as $ m = m (p, \mathcal{H})= \min\{ \oo_p(h_1), \oo_p(h_2),\oo_p(h_3) \} $ with $h_i (x,y)= H_i(x,y,1)$.
%
If $q $ is any point in the first neighborhood of $p$, consider $\pi_p: S' :=\textrm{Bl}_{p}S \rightarrow S$ the blow-up at $p$ with exceptional divisor $E_p= \pi_p^{-1} (p)$. Since the pull-back of functions induces an injective homomorphism of rings $\pi_{p}^{\ast}: \mathcal{O}_{S,p} \longrightarrow \mathcal{O}_{S',q}$, we may consider on $S'$ the net defined locally at $q$ as $\mathcal{H}_p=z^{-m} \pi_{p}^{\ast}(\mathcal{H})$ (where $z$ is any equation for the exceptional divisor $E$ near $q$), that is, generated by $ z^{-m} \pi_{p}^{\ast}(h_i)$, $i \in \{1,2,3\}$. Define $m (q, \mathcal{H}):= m (q, \mathcal{H}_p)$. Recursively this definition may be extended to any $q$ infinitely near to $p$.

\smallskip

Still associated to the homaloidal net $\mathcal{H}$, we may define at whatever (proper or infinitely near) point $p$ in $\CP^2_1 $ the \emph{value} of  $\mathcal{H}$ at $p$ as $l(p, \mathcal{H})= \min\{ l(p, h_1), l(p,h_2),l(p,h_3) \}$.
It is worth to notice that values of $\mathcal{H}$ are straightforward computed from the individual values of its three generators, whereas the computation of the multiplicities requires the common setting of the multiplicities of the generators and subsequent decision at each step of the blowing-up.

\begin{theorem}\label{T.Homaloidal}
\begin{enumerate}
\item[{\rm (1)}] The multiplicities of a homaloidal net $\mathcal{H}$ satisfy the proximity equalities at any proper or infinitely near base point $p$ :
\begin{equation}\label{Eq.proxeq_homal}
m(p,\mathcal{H})= \sum_{q \rightarrow p} m(q,\mathcal{H}) .
\end{equation}
\item[{\rm (2)}] The values of a homaloidal net $\mathcal{H}$ satisfy \begin{equation}\label{Eq.multvalues_homal}
l(p,\mathcal{H})= m(p,\mathcal{H}) + \sum_{p \rightarrow q} l(q,\mathcal{H}).
\end{equation}
Futhermore, the base points of a homaloidal net $\mathcal{H}$ are characterized as those proper or infinitely near points $p$ for which $l(p,\mathcal{H})- \sum_{p \rightarrow q} l(q,\mathcal{H})> 0$.
\item[{\rm (3)}] Homaloidal nets $\mathcal{H}$ are linear systems which have the property of being \emph{complete}: any curve $C$ of degree $n$ and having values $l(p,C) \geq l(p,\mathcal{H})$ at all base points $p$ of $\mathcal{H}$ is a homaloidal curve.
\end{enumerate}
\end{theorem}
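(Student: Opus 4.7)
My plan is to derive parts (1) and the first formula of (2) from the corresponding statements for a single curve---\eqref{Eq.proxeq} and \eqref{Eq.multvalues}---via a genericity argument. At each proper or infinitely near point $q$ in the finite cluster of base points, the conditions $m(q, h) = m(q, \mathcal{H})$ and $l(q, h) = l(q, \mathcal{H})$ each cut out a Zariski-open and non-empty subset of $\mathcal{H}$; a generic member $H \in \mathcal{H}$ therefore attains the minimum multiplicity and minimum value simultaneously at every such $q$. Applying \eqref{Eq.proxeq} and \eqref{Eq.multvalues} to this $H$ at each base point $p$ then transports the equalities directly to the net $\mathcal{H}$, yielding \eqref{Eq.proxeq_homal} and \eqref{Eq.multvalues_homal}.

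For the characterization of base points in (2), I would isolate from \eqref{Eq.multvalues_homal} the identity $m(p,\mathcal{H}) = l(p,\mathcal{H}) - \sum_{p\to q} l(q,\mathcal{H})$ and observe that by construction of the minimal resolving cluster, a proper or infinitely near point $p$ belongs to the cluster of base points if and only if $m(p, \mathcal{H}) > 0$.

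The substantive content is part (3), for which I would pass to the resolved surface $S$ of the factorization $\Phi = \pi' \circ \pi^{-1}$. Let $\mathcal{D}$ be the class in $\mathrm{Pic}(S)$ of the strict transform of a generic member of $\mathcal{H}$; in the strict-transform basis this reads $\mathcal{D} = nL - \sum_i l(p_i, \mathcal{H}) E_{p_i}$, where $L$ is the pullback of the line class on $\CP^2_1$. The key claim is that $\widetilde{\mathcal{H}} = |\mathcal{D}|$, i.e., $\widetilde{\mathcal{H}}$ is a complete linear system on $S$. To establish this, I would identify $\widetilde{\mathcal{H}}$ with the pullback $(\pi')^* |L_2|$ of the complete linear system of lines on $\CP^2_2$: each $L_2 \in |L_2|$ corresponds bijectively to the homaloidal curve $\Phi^{-1}(L_2) \in \mathcal{H}$, and $(\pi')^* L_2$ differs from its strict transform only by exceptional components. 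Since $\pi'$ is a birational morphism between smooth projective surfaces, $(\pi')_* \mathcal{O}_S = \mathcal{O}_{\CP^2_2}$, and the projection formula gives $h^0(S, \mathcal{D}) = h^0(\CP^2_2, L_2) = 3$, forcing the inclusion $\widetilde{\mathcal{H}} \subseteq |\mathcal{D}|$ to be an equality of two-dimensional linear systems.

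Completeness then follows routinely: for a degree-$n$ curve $C$ with $l(p_i, C) \geq l(p_i, \mathcal{H})$ at every base point, the effective divisor $\widetilde{C} + \sum_i (l(p_i, C) - l(p_i, \mathcal{H})) E_{p_i}$ has class $\mathcal{D}$ and therefore lies in $|\mathcal{D}| = \widetilde{\mathcal{H}}$; pushing forward by $\pi$ contracts the exceptional correction and recovers $C$, so $C \in \mathcal{H}$. The principal obstacle is this completeness step $|\mathcal{D}| = \widetilde{\mathcal{H}}$, which is the only place requiring non-elementary input---namely, the projection formula applied to the birational contraction $\pi' : S \to \CP^2_2$; parts (1) and (2) are essentially bookkeeping once the curve-level statements are accepted.
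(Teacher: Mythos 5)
Your treatment of part (1) and of the first formula in (2) is essentially the paper's: the paper also passes to a generic member of the net (citing \cite[2.1.3]{A-C2002} for the fact that generic homaloidal curves realise the multiplicities $m(p,\mathcal{H})$) and then invokes the curve-level identities \eqref{Eq.proxeq} and \eqref{Eq.multvalues}; your semicontinuity argument is just an unpacking of that citation. Where you genuinely diverge is in the remaining two claims, which the paper disposes of by references (the characterization of base points via \cite{ACAMB16}, and completeness via \cite[2.5.2]{A-C2002}), whereas you supply self-contained arguments. Your cohomological proof of (3) -- identifying the system of strict transforms on the resolved surface $S$ with $(\pi')^{*}|L_2|$, computing $h^0(S,\mathcal{D})=3$ from $(\pi')_{*}\mathcal{O}_S=\mathcal{O}_{\CP^2_2}$ and the projection formula, and deducing $|\mathcal{D}|=\widetilde{\mathcal{H}}$ by dimension count -- is correct and arguably more informative than the paper's citation, at the cost of importing the machinery of $\mathrm{Pic}(S)$ and pushforwards of structure sheaves. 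The one step I would not let pass as ``by construction'' is the equivalence, in your proof of the second assertion of (2), between ``$p$ belongs to the cluster of base points of $\Phi$'' (defined via the minimal factorization $\Phi=\pi'\circ\pi^{-1}$) and ``$m(p,\mathcal{H})>0$'': that the minimal factorization of the map coincides with the resolution of the base locus of the net is a theorem (elimination of indeterminacy), and it is precisely the content the paper outsources to \cite{ACAMB16} and \cite{A-C2002}; you should either cite it or prove it, and you should also note that \eqref{Eq.multvalues_homal} must be verified at arbitrary (not only base) points $p$ for the characterization to read off correctly -- which your genericity argument does deliver, since for each fixed $p$ only the finitely many points preceding it are involved.
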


\begin{proof}
By \cite[2.1.3]{A-C2002}, generic homaloidal curves  $C: a_1 H_1 + a_2 H_2+ a_3 H_3=0$ with $(a_1,a_2,a_3) \in V $, $V$ a (Zariski) open subset of $\mathbb{C}^{3}$), satisfy $m(p,C)= m(p,\mathcal{H})$. Hence, claim (1) follows, and, applying \eqref{Eq.multvalues}, the first assertion of claim (2) follows as well.
The rest of claim (2) comes from the recent characterization in \cite{ACAMB16} of the base points of an ideal. Notice that the base points of the homaloidal net $\mathcal{H}$, weighted by the multiplicities or the values, equals the union of weighted clusters of base points of the ideals of the stalks of the ideal sheaf generated by the homaloidal net. From this and using \cite[2.5.2]{A-C2002}, claim (3) is inferred.
\end{proof}

Given a curve $C$ in $\CP^2_1$ its \emph{image} $\Phi(C)$ is the closure of $\Phi(C - F_{\Phi })$ in $\CP^2_2$. If $\Phi(C)$ is a union of points, $C$ is called $\Phi$\emph{-contractile}. There is a maximal $\Phi$\emph{-contractile} curve, which equals $ C_{\Phi} = \bigcup \pi (E'_{q_i})$, the union running over the indexes $1 \leq i \leq \sigma$ for which $q_i$ is expansive.

\begin{lemma} \label{L.MaximalContractile}
Restricted to $\CP^2_1 - C_{\Phi}$ the map $\Phi $ is an isomorphism onto $\CP^2_2 - C_{\Phi ^{-1}}$.
\end{lemma}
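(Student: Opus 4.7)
The plan is to lift $\Phi$ to the common resolution $S$ with its two projections $\pi:S\to\CP^2_1$ and $\pi':S\to\CP^2_2$, and then to exhibit a common open subset $V\subseteq S$ on which both projections are open immersions with the required images. The starting observation is that, being iterated blow-ups, $\pi$ (respectively $\pi'$) is an isomorphism from $S\setminus\bigcup_i E_{p_i}$ to $\CP^2_1\setminus F_\Phi$ (respectively from $S\setminus\bigcup_j E'_{q_j}$ to $\CP^2_2\setminus F_{\Phi^{-1}}$), and that the non-expansive base points are characterized exactly by the coincidences $E_{p_i}=E'_{q_j}$ between these two exceptional loci.

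First I would introduce
\[
V\;:=\;S\setminus\Bigl(\bigcup_i E_{p_i}\,\cup\,\bigcup_j E'_{q_j}\Bigr),
\]
the complement in $S$ of every irreducible exceptional component of either morphism. Both $\pi|_V$ and $\pi'|_V$ are injective open immersions, and I would determine their images by tracking where the removed curves go: each $E_{p_i}$ is contracted by $\pi$ to the proper fundamental point $p_i$; a non-expansive $E'_{q_j}$ coincides with some $E_{p_i}$ and therefore also projects via $\pi$ to a point of $F_\Phi$; while an expansive $E'_{q_j}$ is not exceptional for $\pi$ and its image $\pi(E'_{q_j})$ is a curve in $\CP^2_1$, whose union as $q_j$ ranges over expansive indices is, by definition, $C_\Phi$. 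Therefore $\pi(V)=\CP^2_1\setminus(F_\Phi\cup C_\Phi)$, and symmetrically $\pi'(V)=\CP^2_2\setminus(F_{\Phi^{-1}}\cup C_{\Phi^{-1}})$.

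The key step---and the place where I expect the main technical obstacle---is to prove the inclusions $F_\Phi\subseteq C_\Phi$ and $F_{\Phi^{-1}}\subseteq C_{\Phi^{-1}}$, which turn the previous equalities into $\pi(V)=\CP^2_1\setminus C_\Phi$ and $\pi'(V)=\CP^2_2\setminus C_{\Phi^{-1}}$. For a proper fundamental point $p=p_i$, it suffices to exhibit an expansive index $q_k$ such that the exceptional curve $E'_{q_k}$ meets $E_{p_i}$ in $S$, so that $p_i=\pi(E_{p_i}\cap E'_{q_k})\in\pi(E'_{q_k})\subseteq C_\Phi$. The existence of such a $q_k$ follows from an analysis of the proximity graph of the base cluster $K'$ together with the minimality of $\sigma$: since $p_i$ is fundamental, $E_{p_i}$ is expansive and so is not contracted by $\pi'$; the tree of exceptional components $E'_{q_j}$ attached to $E_{p_i}$ cannot consist exclusively of non-expansive ones, for otherwise the chain of blow-ups producing $E_{p_i}$ would not be undone by $\pi'$, forcing at least one expansive component to intersect $E_{p_i}$.

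Once both inclusions are established, the proof concludes at once: $\Phi|_{\CP^2_1\setminus C_\Phi}$ coincides with the composition $(\pi'|_V)\circ(\pi|_V)^{-1}$, which is a composition of two isomorphisms, and is therefore itself an isomorphism onto $\CP^2_2\setminus C_{\Phi^{-1}}$.
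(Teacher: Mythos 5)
Your overall skeleton coincides with the paper's: both arguments reduce the statement to the inclusion $F_{\Phi}\subseteq C_{\Phi}$ (the first half of your proposal, constructing $V$ and identifying $\pi(V)$ and $\pi'(V)$, is correct and is essentially the standard fact the paper quotes from \cite[2.1.9]{A-C2002}). The gap is in your key step, and it is genuine. First, the assertion ``since $p_i$ is fundamental, $E_{p_i}$ is expansive'' is false: fundamental only means that $p_i$ is a proper base point, and proper base points can be non-expansive. The paper's own normal forms provide counterexamples: in type (C2) one has $E_{p_2}=E'_{q_2}$ with $p_2$ proper, and in type (C3) one has $E_{p_1}=E'_{q_1}$ with $p_1$ the unique fundamental point. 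Second, the existence statement you reduce to --- an expansive $q_k$ with $E'_{q_k}\cap E_{p_i}\neq\emptyset$ on $S$ --- also fails in general. In type (C3) the only expansive divisor of $\pi'$ is $E'_{q_3}=\widetilde{L}$, the strict transform of the line $L=p_1p_2$; since $L$ and $E_{p_1}$ meet transversally at $p_2$, their strict transforms separate after blowing up $p_2$, so on $S$ the curve $\widetilde{L}$ meets $E_{p_2}$ but is disjoint from $E_{p_1}$. Nevertheless $p_1\in\pi(\widetilde{L})=L= C_{\Phi}$, because $E_{p_2}$, which links $\widetilde L$ to $E_{p_1}$, is itself contracted by $\pi$ over $p_1$.

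That example shows what the correct statement must be: one needs an expansive $E'_q$ meeting the \emph{whole fiber} $\pi^{-1}(p_i)$, a connected union of divisors all contracted by $\pi$ to $p_i$, and this already yields $p_i\in\pi(E'_q)\subseteq C_{\Phi}$. Producing such an $E'_q$ is where the paper invests its effort, using two inputs absent from your sketch: (i) the maximal base points of a plane Cremona map are expansive (\cite[2.2.6]{A-C2002}), which gives an expansive base point $p$ of $\Phi$ equal or infinitely near to $p_i$, joined to $E_{p_i}$ by a chain of non-expansive divisors; and (ii) the irreducible curve $\pi'(E_p)$ must pass through some base point $q_k$ of $\Phi^{-1}$ (\cite[2.6.6]{A-C2002}), which allows one to descend through a second chain of non-expansive divisors to an expansive $E'_q$ of $\pi'$. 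Your heuristic that ``the chain of blow-ups producing $E_{p_i}$ would not be undone by $\pi'$'' gestures at (i), i.e.\ at the existence of a divisor over $p_i$ not contracted by $\pi'$; but such a divisor is a component of $C_{\Phi^{-1}}$, not of $C_{\Phi}$, and it is step (ii) that bridges from it to a genuine component of $C_{\Phi}$ passing through $p_i$. Without that bridge the argument does not close.
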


\begin{proof}
Since $\Phi = \pi' \circ  \pi^{-1}$, $\Phi $ restricted to $\CP^2_1 - (F_{\Phi} \cup C_{\Phi})$ is an isomorphism (see \cite[2.1.9]{A-C2002}). We shall prove the sharper result of the statement by showing $ F_{\Phi}\subset C_{\Phi}$.
Let $p_i \in F _{\Phi}$. Since the base points infinitely near to $p_i$ constitute a cluster, $E_{p_i}$ is connected on $S$ to any $E_p$
with $p$ infinitely near to $p_i$ through a chain of exceptional divisors $E_{r_1}= E_{p_i}, \ldots, E_{r_a}= E_p $ where two consecutive elements intersect on $S$.
According to \cite[2.2.6]{A-C2002}, the maximal base points (by the ordering of being infinitely near) of a plane Cremona map are all expansive. Hence among the former points there exits an expansive $p$, for which $r_2, \ldots, r_{a-1}$ are non-expansive.
Now, applying \cite[2.6.6]{A-C2002}, the irreducible curve $\pi '(E_{p})$ on $\CP^2_2$ must go at least through one base point $q_k$ of $\Phi ^{-1}$.
Then, again, $E_{p}$ is connected on $S$ to any $E'_q$ with $q$ equal or infinitely near to $q_k$ through a chain of exceptional divisors $E_{p}, E'_{s_1}, \ldots, E'_{s_b}= E'_q$ where two consecutive elements intersect on $S$. Again by \cite[2.2.6]{A-C2002}, among the former points we may take an expansive $q$ which is minimal in the sense that all $s_1, \ldots, s_{b-1}$ are non-expansive. Notice that $E'_{s_i} = E_{t_i}$ for suitable non-expansive $t_i \in K$, $1 \leq i \leq b-1$.
Summing up, there is a chain of exceptional divisors of $\pi $, $E_{p_i}, E_{r_2}, \ldots, E_{r_{a-1}}, E_p, E_{t_1}, \ldots, E_{t_{b-1}}, E'_q$, connecting $E_{p_i}$ to $E'_q$ on $S$, where two consecutive elements intersect.
Thus the irreducible component $\pi (E'_{q})$ of $C_{\Phi}$ on $\CP^2_1$ goes through the point $p_i$, as desired.
\end{proof}

Suppose the inverse map $\Phi^{-1}$ is defined by homaloidal net $\mathcal{H}'$ spanned by the three homogeneous polynomials $H'_1$, $H'_2$, $H'_3$ in the variables $U$, $V$, $W$, with no common factor. If $C: G(X,Y,Z)=0$ is not contractile, then its \emph{direct image} $\Phi_{\ast}(C)$ is the curve in $\CP^2_2$ defined from the equation $G(H'_1, H'_2, H'_3)=0$ after deleting all the $\Phi ^{-1}$-contractile curves. 
If $\mathcal{F}$  is a projective foliation defined by the projective homogeneous 1-form $\omega (X,Y,Z) =A\,dX+B\,dY+C\,dZ$, then its \emph{direct image} $\Phi_{\ast}\mathcal{F}$ is the foliation in $\CP^2_2$ defined from the 1-form $(\Phi ^{-1})^{\ast}(\omega)$:  $ \omega (H'_1, H'_2, H'_3)= \overline{A}\,dU+\overline{B}\,dV+\overline{C}\,dW$ after deleting all common factors from $\overline{A}$, $\overline{B}$ and $\overline{C}$. These common factors happen to be all $\Phi ^{-1}$-contractile curves, since $\Phi $ is isomorphism outside $C_{\Phi}$ (Lemma \ref{L.MaximalContractile}).

\begin{remark}\label{R.DirectImagFol}
Notice that for whatever factorization of $\Phi = \pi' \circ  \pi^{-1}$, where $\pi $ and $\pi '$ are compositions of blowing-ups we have the equality of foliations $\pi ^{\ast} \mathcal{F} = (\pi ')^{\ast} \Phi_{\ast}\mathcal{F}$ on $S$.
\end{remark}

The action of a general plane Cremona map on curves is completely known (see \cite[Lemma 2.9.3]{A-C2002}). We highlight the most relevant results for our purpose in the following

\begin{lemma}\label{L.CremonaOnCurves}
If $C$ is a curve of  degree $d(C)$, then the degree of its direct image $\Phi_{\ast}(C)$ is
\[
d(C) d(\mathcal{H})- \sum_{i=1}^{\sigma} m(p_i,C) m(p_i,\mathcal{H}).
\]
If moreover $C$ has no contractile components, the multiplicities of $\Phi_{\ast}(C)$ can also be predicted:
\[
m(q_k,\Phi_{\ast}(C))=d(C) m(q_k,\mathcal{H}')- \sum_{i=1}^{\sigma}  m(p_i,C) e_{q_k}(p_i,\mathcal{H}),
\]
where the $e_{q_k}(p_i,\mathcal{H})$ are natural numbers algorithmically determined from the vector encoding the numerical features of $\Phi$, $\left(d(\mathcal{H}); m(p_1,\mathcal{H}),\ldots ,m(p_{\sigma},\mathcal{H})  \right)$.
\end{lemma}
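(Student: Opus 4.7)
\emph{Proof plan.} I will carry out the calculation on the common resolution $S$ of $\Phi$ given by the factorization $\Phi=\pi'\circ \pi^{-1}$. The Picard group $\mathrm{Pic}(S)$ admits two natural bases, the one coming from $\pi$ with generators $\pi^{\ast}L$ (pullback of a line $L\subset \CP^2_1$) and the total transforms $E_i^{\ast}$ of the exceptional divisors at the points $p_i\in K$, and analogously the one coming from $\pi'$ with generators $(\pi')^{\ast}L'$ and $(E'_{q_k})^{\ast}$. In each basis the intersection form is diagonal, with $(\pi^{\ast}L)^2=1$, $(E_i^{\ast})^2=-1$ and all mixed intersections zero, and likewise for the $\pi'$-basis.

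Iterating the standard blow-up formula, the class of the strict transform $\widetilde C$ of $C$ through $\pi$ is
\[
[\widetilde C]=d(C)\,[\pi^{\ast}L]-\sum_{i=1}^{\sigma}m(p_i,C)\,[E_i^{\ast}],
\]
while the strict transform through $\pi$ of a generic homaloidal curve $H\in\mathcal{H}$ coincides with the total transform $(\pi')^{\ast}L'$ of a generic line $L'\subset \CP^2_2$, yielding
\[
[(\pi')^{\ast}L']=d(\mathcal{H})\,[\pi^{\ast}L]-\sum_{i=1}^{\sigma}m(p_i,\mathcal{H})\,[E_i^{\ast}].
\]
Since contractile components of $C$ have strict transforms contracted by $\pi'$, we have $\pi'_{\ast}\widetilde{C}=\Phi_{\ast}(C)$, and the projection formula provides
\[
d(\Phi_{\ast}(C))=\Phi_{\ast}(C)\cdot L'=\widetilde{C}\cdot (\pi')^{\ast}L';
\]
expanding this intersection in the $\pi$-basis gives the stated degree formula.

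For the multiplicity formula, assume $C$ has no contractile components. Then by Lemma~\ref{L.MaximalContractile} the strict transform $\widetilde C$ through $\pi$ coincides with the strict transform of $\Phi_{\ast}(C)$ through $\pi'$, and therefore
\[
[\widetilde C]=d(\Phi_{\ast}(C))\,[(\pi')^{\ast}L']-\sum_{k=1}^{\sigma}m(q_k,\Phi_{\ast}(C))\,[(E'_{q_k})^{\ast}].
\]
Intersecting with $(E'_{q_k})^{\ast}$ yields $m(q_k,\Phi_{\ast}(C))=\widetilde C\cdot (E'_{q_k})^{\ast}$. To evaluate this intersection in the $\pi$-basis I use $[\pi^{\ast}L]\cdot (E'_{q_k})^{\ast}=m(q_k,\mathcal{H}')$, obtained by expanding $\pi^{\ast}L$ in the $\pi'$-basis symmetrically to the homaloidal expansion above; the outcome is the desired identity with
\[
e_{q_k}(p_i,\mathcal{H})=[E_i^{\ast}]\cdot [(E'_{q_k})^{\ast}].
\]

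The main obstacle is to show that these coefficients $e_{q_k}(p_i,\mathcal{H})$ are non-negative integers depending only on the characteristic vector $(d(\mathcal{H});m(p_1,\mathcal{H}),\ldots,m(p_{\sigma},\mathcal{H}))$. Geometrically, $[E_i^{\ast}]\cdot [(E'_{q_k})^{\ast}]$ measures how the exceptional components above $p_i$ and above $q_k$ meet on the common resolution, and the proximity equalities~\eqref{Eq.proxeq_homal} together with the classification of base points into proper, free and satellite ones translate into a linear system determining the change-of-basis matrix between the two orthogonal decompositions of $\mathrm{Pic}(S)$. The detailed combinatorial description of clusters developed in \cite[Ch.~2]{A-C2002} then yields both the non-negativity and the algorithmic computability of $e_{q_k}(p_i,\mathcal{H})$.
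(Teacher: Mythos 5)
The paper does not actually prove this lemma: it is quoted as known, with a pointer to \cite[Lemma 2.9.3]{A-C2002}. Your argument --- expanding the class of the strict transform of $C$ on the common resolution $S$ in the two orthogonal bases of $\mathrm{Pic}(S)$ coming from $\pi$ and $\pi'$, and evaluating intersections with $(\pi')^{\ast}L'$ and $(E'_{q_k})^{\ast}$ --- is the standard derivation underlying that reference, and the computations you indicate are correct, including the identification of $\widetilde{C}$ with the strict transform of $\Phi_{\ast}(C)$ through $\pi'$ when $C$ has no contractile components (which is exactly what Lemma \ref{L.MaximalContractile} provides). The one point you do not prove --- that the entries $e_{q_k}(p_i,\mathcal{H})=[E_i^{\ast}]\cdot[(E'_{q_k})^{\ast}]$ of the characteristic matrix are non-negative and determined by the characteristic vector alone --- is genuinely the nontrivial part of the cited result; you correctly flag it and defer it to \cite{A-C2002}, which is no less than the paper itself does.
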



Forthcoming Section \ref{SS.TransfFolByCre} is devoted to the study of the action of plane Cremona maps on foliations.

\subsection{Quadratic plane Cremona maps}

In this work we shall focus on plane Cremona maps of degree $2$, called \emph{quadratic}. Notice that any plane Cremona map may be expressed as the composition of quadratic transformations (see \cite[Theorem 8.4.3]{A-C2002}).

\smallskip

In virtue of \cite[Section 2.8]{A-C2002} any quadratic plane Cremona map with homaloidal net $\mathcal{H}$ has three base points, say them $p_1$, $p_2$, $p_3$, and they all are simple, that is, they have multiplicity $\mu (p_i, \mathcal{H})=1$ for all $i \in \{1,2,3\}$. As a consequence of Theorem \ref{T.Homaloidal}, according to the proximity relations between these base points, quadratic maps can be classified into three types:
\begin{enumerate}
\item[{\rm (C1)}] An \emph{ordinary} quadratic plane Cremona map: all three base points are proper planar points and hence there is no proximity relation between them.
\begin{figure}[h]
\centering
\includegraphics[width=12cm]{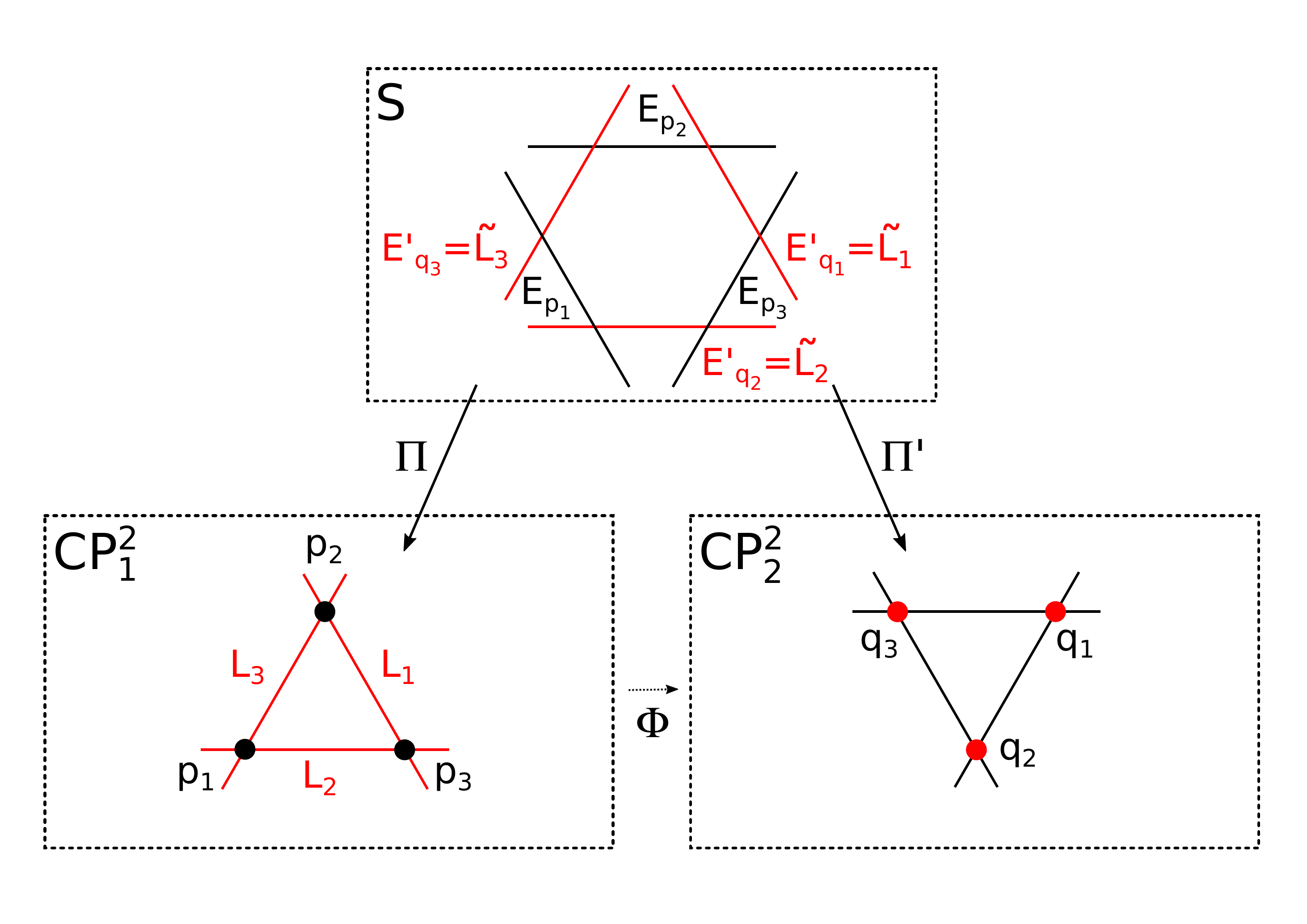}
\caption{\footnotesize The ordinary plane Cremona map.}\label{F.C1}
\end{figure}
Any ordinary quadratic Cremona map factorizes as the blow-up at $K= \{p_1, p_2, p_3\}$, followed by the blow-downs of the strict transforms $\widetilde{L}_{i} = E'_{q_i}$ of the three lines $L_i := p_j  p_k$, with $\{i,j,k\} = \{1,2,3\}$. Then $\{q_1, q_2, q_3\}$ are the base points of the inverse. See Figure \ref{F.C1}.

\item[{\rm (C2)}] A quadratic plane Cremona map with exactly two proper planar base points, $p_1$ and $p_2$. The third base point $p_3$ lies on the first neighborhood of one of them, suppose of $p_2$ (i.e. $p_3$ lies on the exceptional component $E_{p_2}$ of blowing up $p_2$) and there is only one proximity relation, namely $p_3 \rightarrow p_2$.
\begin{figure}[h]
\centering
\includegraphics[width=12cm]{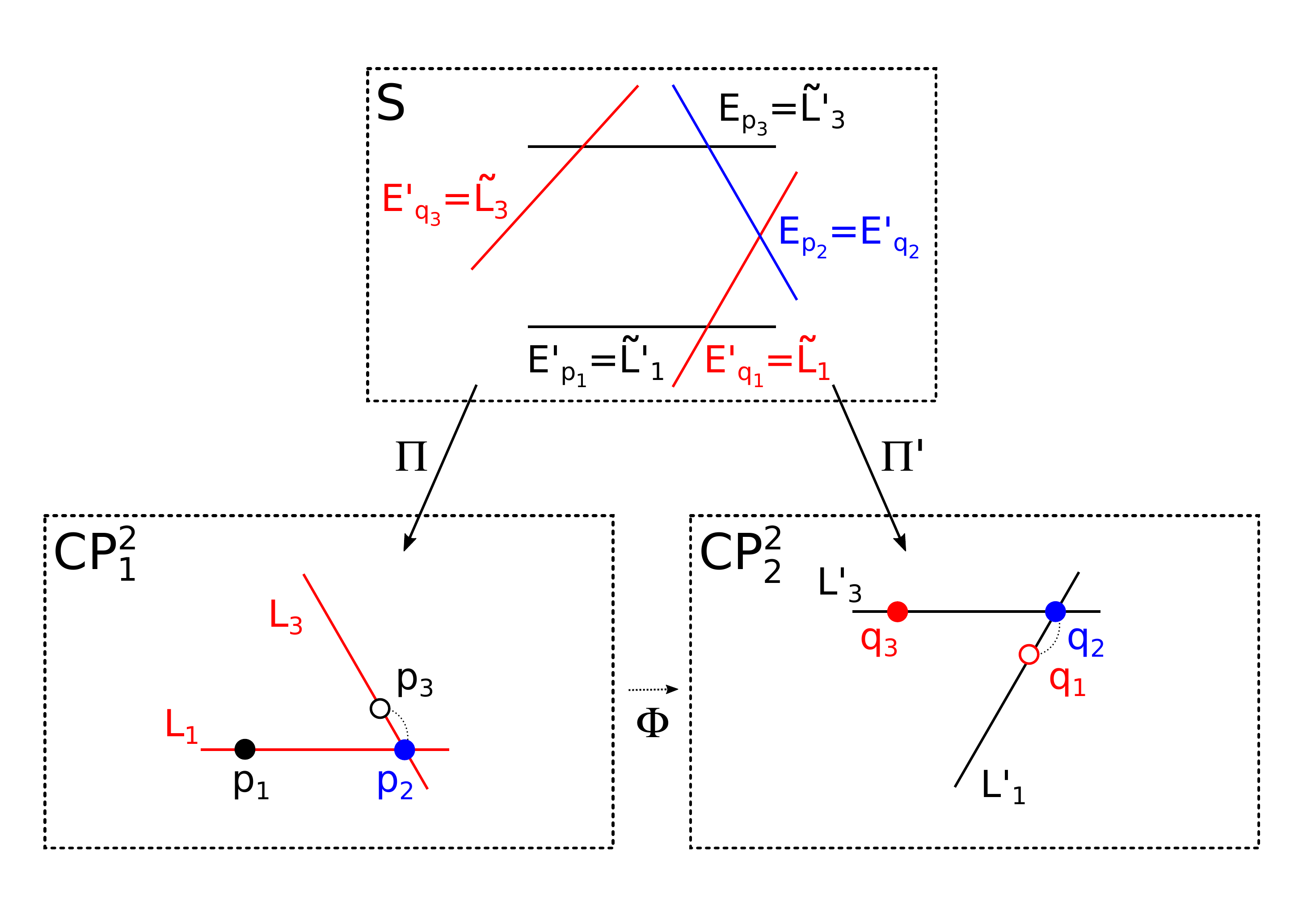}
\caption{\footnotesize A quadratic plane Cremona map with exactly two proper planar base points (I).}\label{F.C2}
\end{figure}
\begin{figure}[h]
\centering
\includegraphics[width=12cm]{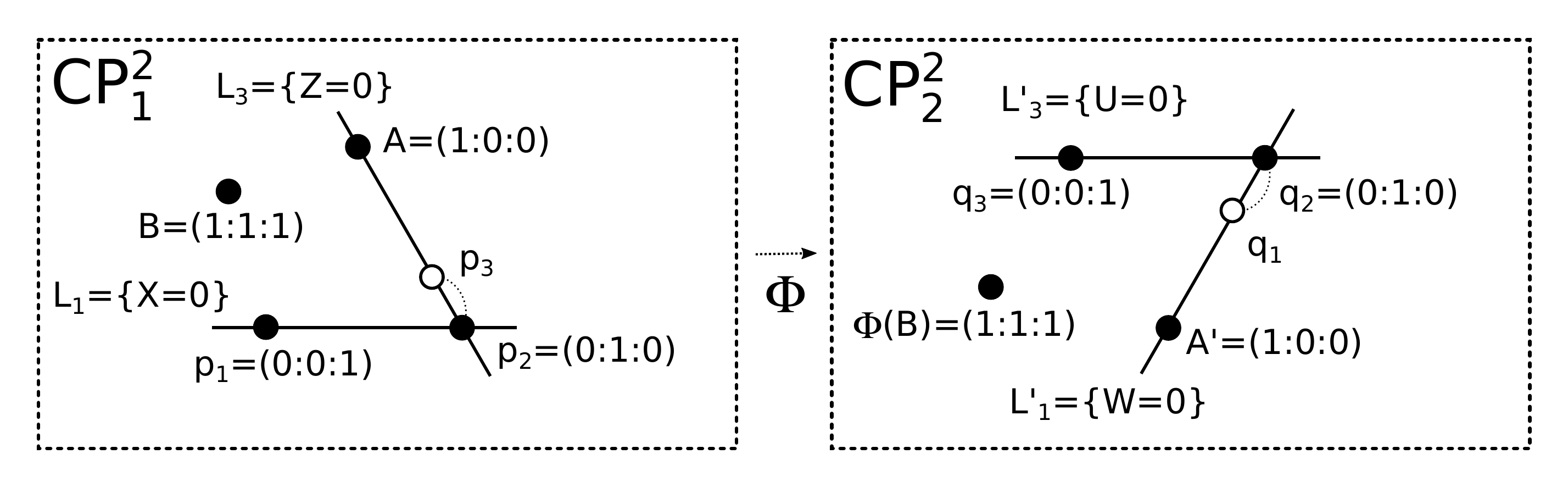}
\caption{\footnotesize A quadratic plane Cremona map with exactly two proper planar base points (II).}\label{F.C2b}
\end{figure}
Any quadratic Cremona map of type (C2) factorizes as the blow-up at $K= \{p_1, p_2, p_3\}$, followed by the blow-downs of the strict transforms $\widetilde{L}_{i} = E'_{q_i}$ of the lines $L_i := p_2  p_i$, for $i \in \{1,3 \}$ (where $p_2  p_3$ is the unique line going through $p_2$ such that its multiplicity at $p_3$ is one) and of the exceptional divisor $E_{p_2} = E'_{q_2}$. Then $\{q_1, q_2, q_3\}$ are the base points of the inverse, they satisfy $q_1 \rightarrow q_2$ and $ E_{p_i}$ are the strict transforms $\widetilde{L'_{i}}$ of the lines $L'_i := q_2  q_i$, for $i \in \{1,3 \}$ (where $q_2  q_1$ is the unique line going through $q_2$ such that its multiplicity at $q_1$ is one). See Figures \ref{F.C2} and \ref{F.C2b}.

\item[{\rm (C3)}] A quadratic plane Cremona map with a unique proper planar base point, $p_1$. A second base point, $p_2$, lies on the first neighborhood of $p_1$, and the third base point $p_3$ lies on the first neighborhood of $p_1$ and it is only proximate to $p_2$. That is, $p_2 \rightarrow p_1$ and $p_3 \rightarrow p_2$ are all the proximity relations.
\begin{figure}[h]
\centering
\includegraphics[width=12cm]{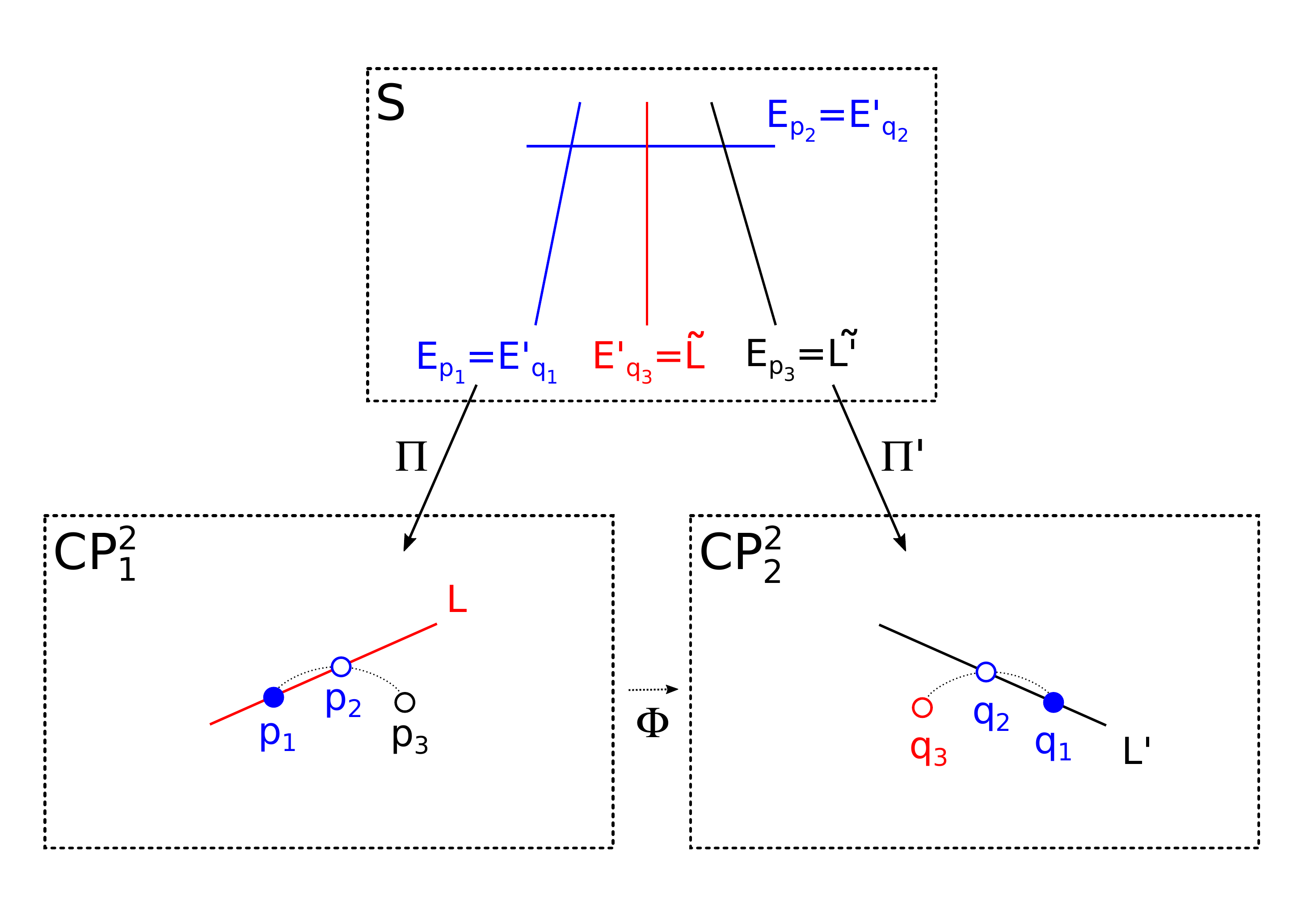}
\caption{\footnotesize A quadratic plane Cremona map with a unique proper planar base point (I).}\label{F.C3}
\end{figure}
\begin{figure}[h]
\centering
\includegraphics[width=12cm]{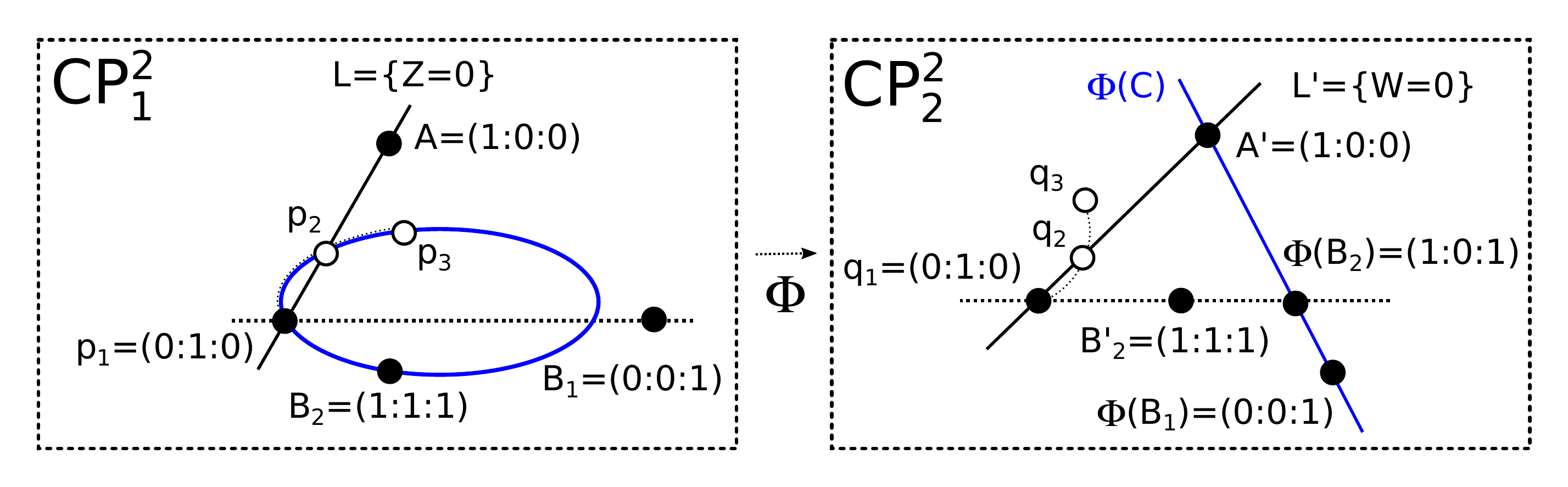}
\caption{\footnotesize A quadratic plane Cremona map with a unique proper planar base point (II).}\label{F.C3b}
\end{figure}
Any quadratic Cremona map of type (C3) factorizes as the blow-up at $K= \{p_1, p_2, p_3\}$, followed by the blow-downs of the strict transform $\widetilde{L} = E'_{q_3}$ of the line $L := p_1  p_2$ (where $p_1  p_1$ is the  unique line going through $p_1$ such that its multiplicity at $p_2$ is one) and of the exceptional divisors $E_{p_2} = E'_{q_2}$ and $E_{p_1} = E'_{q_1}$. Then $\{q_1, q_2, q_3\}$ are the base points of the inverse, they satisfy $q_2 \rightarrow q_1$ and $q_3 \rightarrow q_2$, and $ E_{p_3}$ is the strict transform $\widetilde{L'}$ of the line $L':= q_1  q_2$ (where $q_1  q_2$ is the unique line going through $q_1$ such that its multiplicity at $q_2$ is one). See Figures \ref{F.C3} and \ref{F.C3b}.
\end{enumerate}

Notice that no projectivity of the plane can transform a quadratic plane Cremona map of one type into another of a different type: indeed, a projectivity sends proper points to proper points, and each of the three types of quadratic maps has a different number of proper base points. Next result classifies quadratic plane Cremona maps under \emph{projective equivalence}, that is, two maps are equivalent if you obtain one from the other by composing with suitable planar projectivities, in both the departure $\CP^2_1$ and target $\CP^2_2$ planes. We will prove that any quadratic Cremona map is projectively equivalent to a unique distinguished representative belonging to one of the three types listed above.

\begin{proposition}\label{P.Cre}
The quadratic plane Cremona maps can be classified under projective equivalence into type \textrm{(C1)}, \textrm{(C2)} or \textrm{(C3)}, and each type has a distinguished normal form:
\begin{enumerate}
\item[{\rm (C1)}] An ordinary quadratic Cremona transformation can be written, up to projective equivalence, as
\begin{equation}\label{C1}
(U,V,W)= (YZ,XZ,XY).
\end{equation}
Its inverse is
\begin{equation}\label{C1i}
(X,Y,Z)=(VW,UW,UV).
\end{equation}
Notice that both transformations coincide, so the representative of the class {\rm (C1)} is an involution.
\item[{\rm (C2)}] A quadratic Cremona transformation with only two proper base points can be written, up to projective equivalence, as
\begin{equation}\label{C2}
(U,V,W)= (XZ,YZ,X^2).
\end{equation}
Its inverse is
\begin{equation}\label{C2i}
(X,Y,Z)=(UW,VW,U^2).
\end{equation}
Again the representative of the class {\rm (C2)} is an involution.
\item[{\rm (C3)}] A quadratic Cremona transformation with a unique proper base point can be written, up to projective equivalence, as
\begin{equation}\label{C3}
(U,V,W)= (XZ,YZ+cX^2,Z^2).
\end{equation}
for some convenient $c\neq0$. Its inverse is
\begin{equation}\label{C3i}
(X,Y,Z)=(UW,VW-cU^2,W^2).
\end{equation}
Notice that any $c\neq0$ gives a representative of the class {\rm (C3)}.
\end{enumerate}
\end{proposition}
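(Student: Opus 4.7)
My plan is to handle each of the three types (C1), (C2), (C3) by a common strategy: apply a projectivity of $\CP^2_1$ to normalize the cluster of base points (positions of the proper base points and tangent directions or positions of the infinitely near ones); use Theorem~\ref{T.Homaloidal}(3) to identify the homaloidal net $\mathcal{H}$ with the complete linear system of conics satisfying the prescribed multiplicity conditions; choose a basis and compose with a projectivity of $\CP^2_2$ to bring this basis to the stated normal form. The formula for the inverse is then obtained by algebraic inversion of the resulting rational map.

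For (C1), the three proper base points $p_1,p_2,p_3$ must be non-collinear, for otherwise the line through them would divide every conic of $\mathcal{H}$; a source projectivity sends them to the coordinate points, the space of conics through these is spanned by $\{YZ,XZ,XY\}$, and a target projectivity yields \eqref{C1}. The involutive character follows from the computation $\Phi\circ\Phi=(XYZ)\cdot\mathrm{id}_{\CP^2}$.

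For (C2), the direction of $p_3$ at $p_2$ must differ from the line $p_1p_2$ (otherwise this line divides every conic in $\mathcal{H}$). After sending $p_1\to[0:0:1]$ and $p_2\to[0:1:0]$, the stabilizer of $\{p_1,p_2\}$ acts transitively on the tangent directions at $p_2$ distinct from $p_1p_2=\{X=0\}$, so the $p_3$-direction can be normalized to $\{Z=0\}$. The three linear conditions imposed on a general conic $\alpha X^2+\beta Y^2+\gamma Z^2+\delta XY+\epsilon XZ+\zeta YZ$ force $\gamma=\beta=\delta=0$, the surviving space being spanned by $\{X^2,XZ,YZ\}$; a target projectivity then yields \eqref{C2}, which is verified to be an involution by a direct computation analogous to that in (C1).

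For (C3), place $p_1=[0:1:0]$ and the direction of $p_2$ at $p_1$ along $\{Z=0\}$. Two successive blowups in the chart $Y=1$ translate the passage through $p_3$, sitting at a position $r_3\neq 0$ on $E_{p_2}$ (the condition $r_3\ne 0$ encoding the non-proximity $p_3\not\to p_1$), into one additional linear relation on the coefficients; this yields the basis $\{XZ,\ YZ+cX^2,\ Z^2\}$ after setting $c=-1/r_3$, that is, the normal form \eqref{C3}. The residual source projectivity $X\mapsto\lambda X$ fixes $p_1$ and the $p_2$-direction while changing $c$ to $c/\lambda^2$, so any $c\ne 0$ gives an admissible representative, while $c=0$ would introduce a common factor $Z$ and spoil birationality; the inverse \eqref{C3i} follows by algebraic inversion. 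Across the three cases the main technical obstacle is to translate the cluster-level data at infinitely near base points, namely the proximity and non-proximity relations, into explicit linear conditions on the coefficients of a general conic via successive blowups; once this translation is carried out, the remainder of the proof reduces to elementary linear algebra on the three-dimensional space of conics.
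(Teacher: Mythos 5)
Your proof is correct and follows essentially the same strategy as the paper's: classify the three configurations by the proximity relations among the simple base points, normalize the cluster by a source projectivity, identify the homaloidal net as the complete system of conics through the weighted cluster via Theorem~\ref{T.Homaloidal}(3), and finish with a target projectivity. The only differences are in execution: you dispatch the target coordinates in one stroke by sending a chosen basis of the net to the standard one (where the paper refines the target reference incrementally by imposing conditions on the images of the contractile lines), and you prove the projective equivalence of all $c\neq 0$ in (C3) by the residual scaling $X\mapsto\lambda X$ (where the paper adjusts the unit points); both shortcuts are valid.
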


\begin{proof}
First of all, notice that any quadratic plane Cremona $\Phi$ map must fall into one of the three types (C1), (C2) or (C3) which have been described above. Indeed, these types comprise all the possibilities of proximity relations between the three simple base points of $\Phi$, according to the proximity equalities (1) in Theorem  \ref{T.Homaloidal}.

\smallskip

By taking a projective coordinate system in $\CP^2_1$ with $p_1 =(0:0:1)$, $p_2=(1:0:0)$, $p_3= (0:1:0)$ and whatever unit point $A:=(1:0:0)$, and the projective coordinate system in $\CP^2_2$ satisfying $q_1= (0:0:1)$, $q_2=(1:0:0)$, $q_3= (0:1:0)$ with unit point $\Phi (A)=(1:1:1)$, and by applying \cite[2.8.2]{A-C2002}, it follows that any quadratic Cremona map of type (C1) is projectively equivalent to the ordinary quadratic map in \eqref{C1}.

\smallskip

Start now with a quadratic Cremona map $\Phi$ of type (C2) with homaloidal net $\mathcal{H}$. Fix in $\CP^2_1$ the projective coordinate system $p_1= (0:0:1)$, $ p_2=(0:1:0)$, $A:=(1:0:0)$ being any point on the line $L_3:=p_2  p_3$ different from $p_2$, and take whatever suitable unit point $B:=(1:1:1)$. In $\CP^2_2$, fix the coordinate system $q_2= (0:1:0)$, $ q_3=(0:0:1)$, and, for the moment, $A':=(1:0:0)$
is any proper point on the line $L'_1:= q_2  q_1$, and take $B':=(1:1:1)=\Phi (B)$ as the unit point.
We will refine throughout the proof the election of the third coordinate point in $\CP^2_2$.

\smallskip

All the conics $C: a_1 XZ + a_2 YZ+ a_3 X^2= 0$ satisfy
$l(p,C) \geq l(p,\mathcal{H})$ at any base point $p$ of $\Phi$. Hence they all are homaloidal curves, according to Theorem \ref{T.Homaloidal}(3), and hence the homaloidal net $\mathcal{H}$ of $\Phi$ is $\mathcal{H}= \{ a_1 XZ + a_2 YZ+ a_3 X^2= 0 : a_i \in \mathbb{C} \}$. Hence $\Phi$ is defined by equations
\[
(U,V,W)= \left(a_1 XZ+ a_2 YZ+ a_3 X^2, b_1 XZ+ b_2 YZ+ b_3 X^2, c_1 XZ+ c_2 YZ+ c_3 X^2\right)
\]
for some $a_i, \, b_i, \, c_i \in \mathbb{C}$. Now, from the description of the (C2) type, we must impose the conditions $\Phi (p_2 p_1)= q_2$ and $\Phi (p_2 p_3)= q_3$: $\Phi (\{ X =0\})= (a_2:b_2:c_2)= (0:1:0)$ and $\Phi (\{ Z =0\})= (a_3:b_3:c_3)= (0:0:1)$, that is, $a_2=c_2=0$ and $a_3=b_3=0$. Notice that those equations for $\Phi$ also give $\Phi (\{ c_3X + c_1 Z =0\})\subseteq \{ w =0\}$, which is impossible unless $c_1=0$.

\smallskip

At this point, we have already proved that $\Phi$ has equations
\[
(U,V,W)= \left(a_1 XZ, b_1 XZ+ b_2 YZ, c_3 X^2\right)
\]
for some $a_1, \, b_1, \,b_2, \, c_3 \in \mathbb{C}$. We can still refine the choice of $A'$ in order to infer the desired normal form of \eqref{C2}.
Observe that the image by $\Phi$ of the line $y=0$ is the line $a_1 V-b_1 U = 0$. Hence choosing $A'$ in the intersection between the lines $q_2  q_1$ and  $\Phi (\{ Y=0\})$ is equivalent to saying that $\Phi (\{Y=0\})= \{ a_1 V -b_1 U = 0 \} = \{ V=0 \}$, that is, $b_1=0$.
Finally we impose the last condition $\Phi (B)=B'$, giving $\Phi (1:1:1)=(a_1:b_2:c_3)=(1:1:1)$, from which we infer the desired normal form of \eqref{C2}.

\smallskip

Consider a quadratic Cremona map $\Phi$ of type (C3) with homaloidal net $\mathcal{H}$. Fix in $\CP^2_1$ the projective coordinate system $p_1=(0:1:0)$, $A:=(1:0:0)$ any proper point on the line $L = p_1 p_2$, and whatever suitable third coordinate point $B_1:= (0:0:1)$ and unit point $B_2:=(1:1:1)$. In $\CP^2_2$, fix the coordinate system $q_1= (0:1:0)$, $B'_{1}=(0:0:1)=\Phi (B_1)$ and, for the moment, $A':=(1:0:0)$ is any proper point on the line $L':= q_1  q_2$, and take as unit point $B'_{2}:=(1:1:1)$ any suitable point on the line $q_1 \Phi (B_2)$. We will refine throughout the proof the election of the unit point in $\CP^2_1$ and of the third coordinate and unit points in $\CP^2_2$.

\smallskip

Choose a suitable $c \in \mathbb{C}$ with $c\neq 0$ so that the conic $C: YZ + c X^2=0$ goes through the infinitely near point $p_3$, that is, $m(p_3,C) =1$.
Then, all the conics $C: a_1 XZ + a_2 (YZ + c X^2)+ a_3 Z^2= 0$  satisfy
$l(p,C) \geq l(p,\mathcal{H})$ at any base point $p$ of $\Phi$.
Hence they all are homaloidal curves, according to Theorem \ref{T.Homaloidal}(3), and hence the homaloidal net $\mathcal{H}$ of $\Phi$ is $\mathcal{H}= \{ a_1 XZ + a_2 (YZ + c X^2)+ a_3 Z^2= 0 : a_i \in \mathbb{C} \}$. Hence $\Phi$ is defined by equations
\[
\begin{split}
U= a_1 XZ+ a_2 (YZ + c X^2)+ a_3 Z^2 \\
V= b_1 XZ+ b_2 (YZ + c X^2)+ b_3 Z^2 \\
W= c_1 XZ+ c_2 (YZ + c X^2)+ c_3 Z^2
\end{split}
\]
for some $a_i, \, b_i, \, c_i \in \mathbb{C}$. Now, from the description of the (C3) type, we must impose the condition $\Phi (p_1 p_2)= q_1$: $\Phi (\{ Z =0\})= (a_2:b_2:c_2)= (0:1:0)$, that is, $a_2=c_2=0$. Notice that those equations for $\Phi$ give $\Phi (\{ a_1X + a_3 Z =0\})\subseteq \{ U=0\}$, which is impossible unless $a_3=0$, and $\Phi (\{ c_1X + c_3 Z =0\})\subseteq \{ W =0\}$, which is impossible unless $c_1=0$.
Now, owing to the way we have chosen $B'$, we may impose $\Phi (B)=B'$ and this gives $\Phi (0:0:1)=(0:b_3:c_3)=(0:0:1)$, that is, $b_3=0$.

\smallskip

At this point, we have already proved that $\Phi$ has equations
\[
(U,V,W)= \left(a_1 XZ, b_1 XZ+ b_2 (YZ + c X^2), c_3 Z^2\right)
\]
for some $a_1, \, b_1, \,b_2, \, c_3 \in \mathbb{C}$, and a suitable non-zero $c$. We can still refine some choices in order to infer the desired normal form of \eqref{C3}.
Observe that the image by $\Phi$ of the conic
$C: YZ + c X^2=0$ is the line $a_1 v -b_1 u = 0$. Hence choosing $A'$ in the intersection of the lines $L'$ and $\Phi (C)$ is equivalent to saying that $\Phi (C)= \{ a_1 V -b_1 U = 0 \} = \{ V=0 \}$, that is, $b_1=0$.
Now, we can also refine the choice of the unit point $B'_{2}$ in $\CP^2_2$ and we take as $B'_{2}:=(1:1:1)$ the unique point on the line $q_1 \Phi (B_2)$ which makes that the coordinates of $\Phi (B_2)$ are exactly $(1:1+c:1)$. Then, imposing $\Phi (1:1:1)=(a_1:b_2 (1+c):c_3)=(1:1+c:1)$ gives the desired normal form of \eqref{C3}, in which $c$ is determined by the relative position of $p_3$ in the third infinitesimal neighborhood of $p_1$.

\smallskip

Finally we will show that we can still refine the choice of the unit point $B_{2}$ in $\CP^2_1$ to achieve that any two quadratic plane Cremona maps in normal form \eqref{C3} with different parameter $c$ are projectively equivalent. Indeed, taking $B_2:=(1:1:1)$ on the conic $C: YZ + c X^2=0$, we get $c=-1$.
\end{proof}

\begin{figure}[h]
\centering
\begin{tabular}{ccc}
\includegraphics[width=3cm]{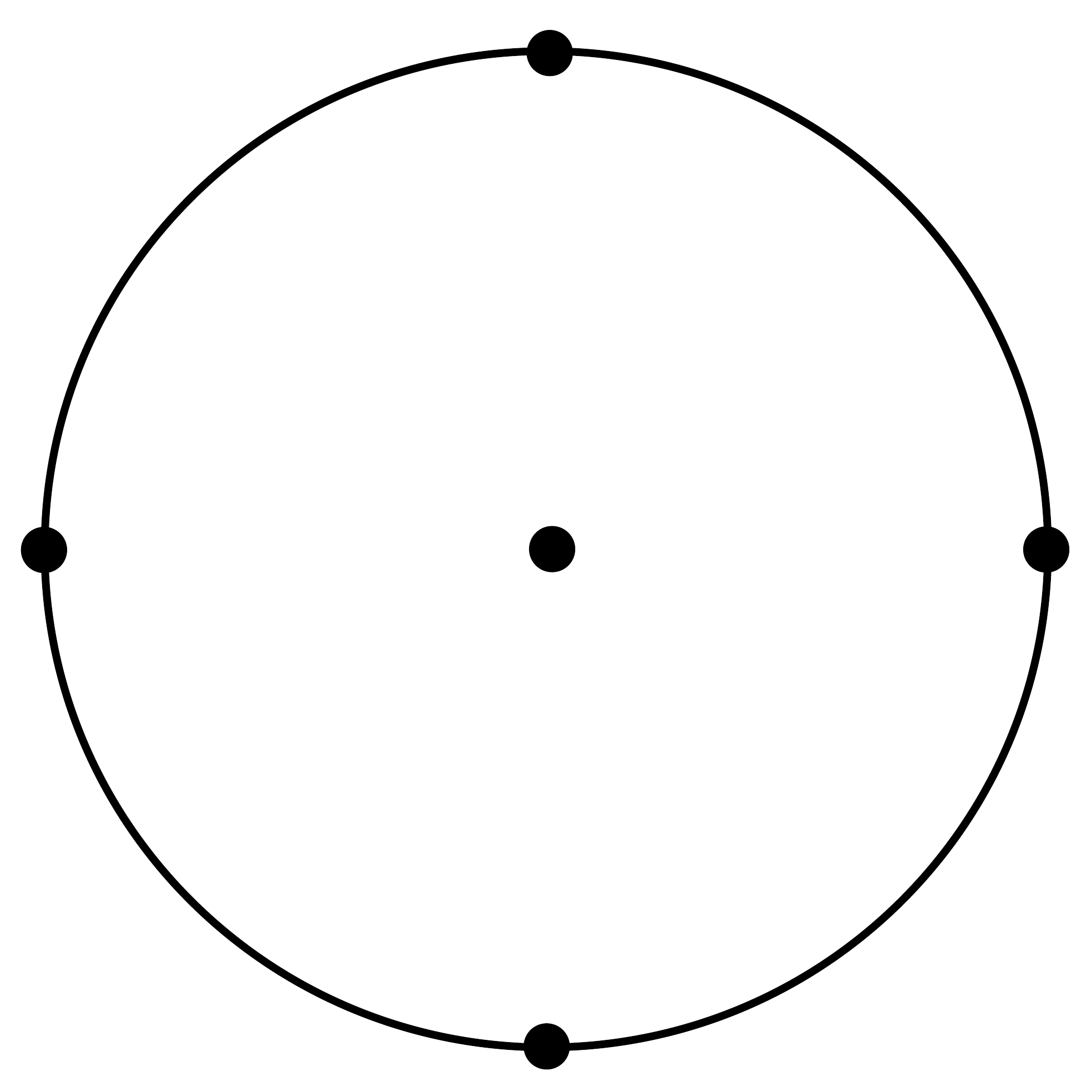}&\includegraphics[width=3cm]{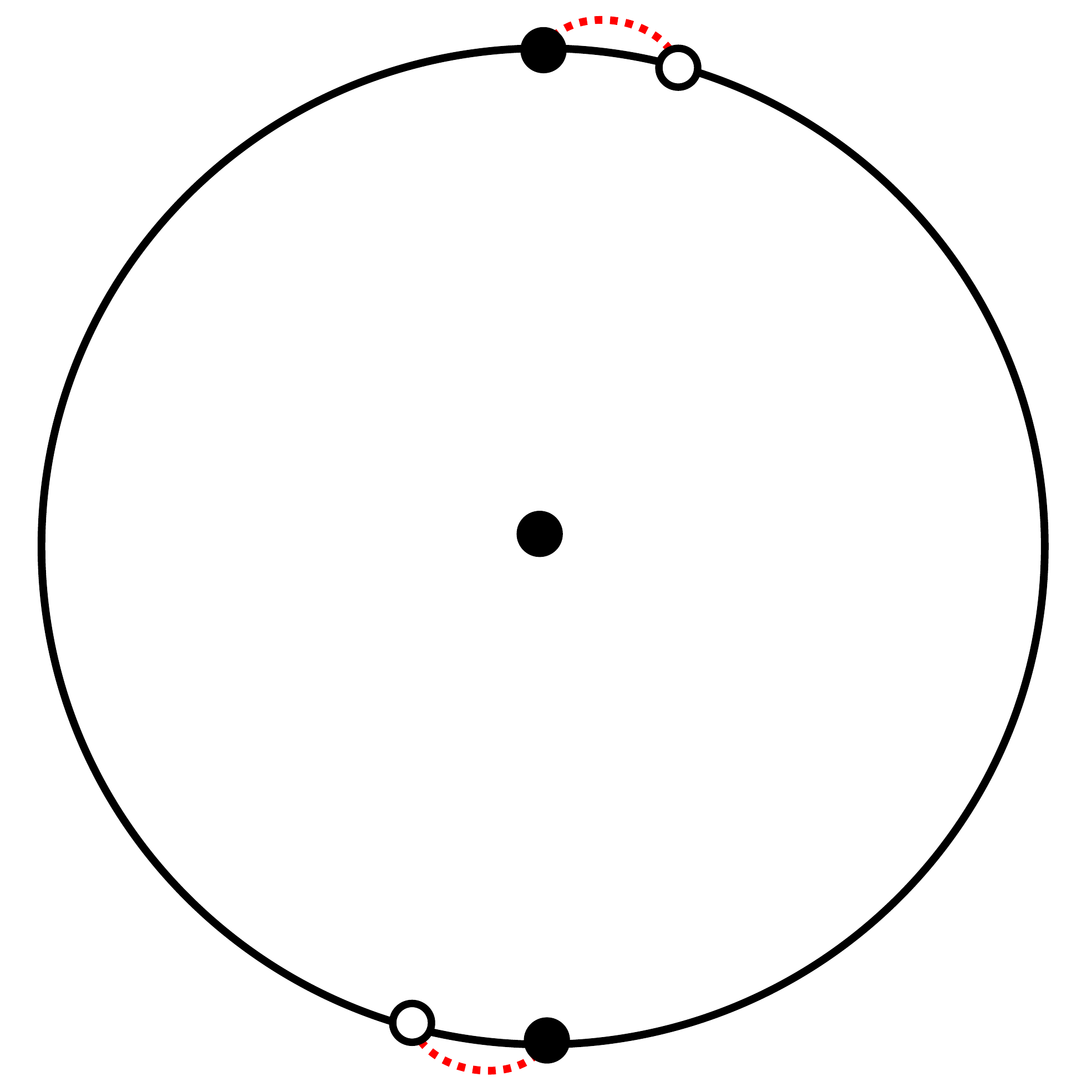}&\includegraphics[width=3cm]{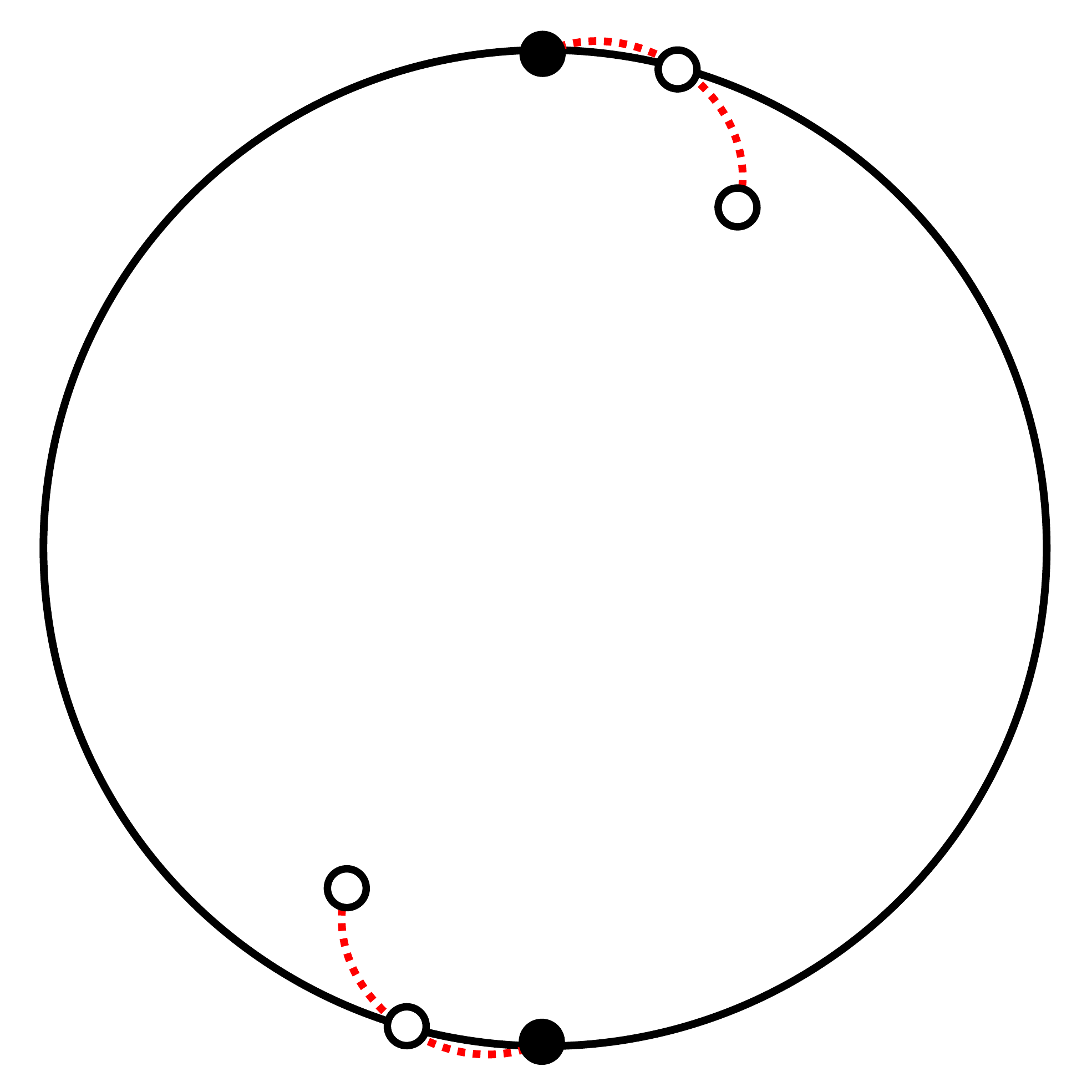}\cr
(a)&(b)&(c)
\end{tabular}
\caption{\footnotesize The three base points of the different types of Cremona maps (C1), (C2) and (C3) listed in Proposition \ref{P.Cre}, from left to right, drawn in the Poincar\'{e} disk. Black dots are proper points, while white dots are infinitely near singular points.}
\end{figure}

Still more interestingly the proof of Proposition \ref{P.Cre} gives the following existential result on quadratic plane Cremona maps.
\begin{proposition}\label{P.ExistCre}
Let $\{ p_1, p_2, p_3 \}$ be a cluster of (infinitely near) points in the plane. There exists a quadratic plane Cremona map with base points $p_1, p_2, p_3$ if and only if $p_1$, $p_2$ and $p_3$ are not aligned.
\end{proposition}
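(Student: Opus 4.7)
The plan is to establish the two directions of the equivalence separately, using Theorem \ref{T.Homaloidal} for necessity and Proposition \ref{P.Cre} for sufficiency.

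For necessity I argue by contradiction. Suppose $\Phi$ is a quadratic Cremona map with homaloidal net $\mathcal{H}$ and base cluster $\{p_1, p_2, p_3\}$, and assume the three points are \emph{aligned}, meaning there exists a line $L$ with $m(p_i, L) \geq 1$ for every $i$ (interpreted in the cluster sense via \eqref{Eq.multvalues}). For any conic $C \in \mathcal{H}$ the local intersection at the shared cluster satisfies $(C \cdot L) \geq \sum_{i=1}^{3} m(p_i, C)\, m(p_i, L) = 3$, whereas Bezout gives the global intersection $C \cdot L = 2$; hence $L$ must be a component of every conic in $\mathcal{H}$. This forces $\mathcal{H} = L \cdot \mathcal{L}$, where $\mathcal{L}$ is a two-dimensional linear system of lines; since the complete linear system of lines in $\CP^2$ is already two-dimensional, $\mathcal{L}$ is the full system of lines, and after cancelling the common factor $L$ the rational map $\Phi$ reduces to a projectivity, contradicting that $\Phi$ has degree two.

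For sufficiency I exploit the constructive content of Proposition \ref{P.Cre}. A non-aligned cluster $K = \{p_1, p_2, p_3\}$ falls into exactly one of the types (C1), (C2), (C3) according to its proximity relations. In each case the plan is to produce a projectivity $\phi \colon \CP^2 \to \CP^2$ sending $K$ onto the base cluster $K_0$ of the corresponding normal form $\Phi_0$, and then define $\Phi := \Phi_0 \circ \phi$, whose base cluster is then $\phi^{-1}(K_0) = K$. For (C1) this is the standard statement that three non-collinear proper points can be completed to a projective frame and carried to $(0:0:1), (1:0:0), (0:1:0)$. For (C2) the non-alignment translates into the tangent direction $p_3$ at $p_2$ being distinct from the direction of the line $p_1 p_2$ at $p_2$, which is precisely the freedom used in the coordinate setup of the proof of Proposition \ref{P.Cre}(C2) to simultaneously place $L_1 = \{X=0\}$ and $L_3 = \{Z=0\}$. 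For (C3) the non-alignment translates, after placing $p_1$ at $(0:1:0)$ and the direction $p_2$ along $Z=0$, into the parameter $c$ in \eqref{C3} being nonzero; the proof of Proposition \ref{P.Cre} further shows that $c$ can be normalized to $-1$ by an additional change of unit point.

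The step I anticipate as the main obstacle is the rigorous and uniform formalization of ``$L$ passes through $p_i$'' across the three cluster types, together with the intersection-at-a-cluster formula that underlies the Bezout argument of the necessity part; once this is in place, the sufficiency direction essentially amounts to rereading the proof of Proposition \ref{P.Cre} as producing, rather than merely classifying, the quadratic Cremona maps.
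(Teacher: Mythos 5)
Your proposal is correct and follows essentially the same route as the paper: sufficiency by invoking the construction carried out in the proof of Proposition \ref{P.Cre}, and necessity by the Bezout argument that a line through all three simple base points would meet the conics of the homaloidal net in at least three points. Your extra step ruling out the degenerate case where $L$ is a common component of the net (which would contradict the generators having no common factor, i.e.\ $d(\mathcal{H})=2$) is a worthwhile clarification of the paper's terser statement, but it is the same underlying argument.
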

\begin{proof}
If $p_1$, $p_2$ and $p_3$ are not aligned, then the construction of the proof of \ref{P.Cre} can be carried out and the the desired quadratic plane Cremona map.
Otherwise, if $p_1$, $p_2$ and $p_3$ lie on a line $L$ and such a quadratic map would exist, then $L$ would cut its homaloidal net of conics in 3 points, resulting in contradiction with Bezout's Theorem.
\end{proof}

\section{Transforming differential systems by plane Cremona maps}\label{SS.TransfFolByCre}

In this section we will describe the effect of applying quadratic plane transformations on foliations and on curves. The effect of an ordinary quadratic Cremona map is already known (see \cite{MP}). Since the base points of the ordinary map are all proper points in the plane and they are all expansive, the case of a quadratic map of type (C1) is easier to handle. In fact, the motivation for introducing in subsection \ref{S.LInv} the generalized (to infinitely near points) notions of algebraic multiplicity and vanishing order on exceptional divisors was to provide the suitable tools for describing the effect of a general quadratic Cremona map acting on any foliation.


Consider a plane Cremona map between two complex projective planes $\Phi : \CP^2_1 \dashrightarrow \CP^2_2$, and suppose $\mathcal{F}$ and $C$ a are a projective foliation and a curve in $\CP^2_1$, respectively.
The following lemma is a version of \cite[Lemma 1, pg. 278]{MP}, where the action of the plane Cremona map on curves is formulated in more precise terms (see our previous Lemma \ref{L.CremonaOnCurves}).

\begin{lemma}\label{L.MP}
Let $\Phi $ be an ordinary quadratic plane Cremona map, and suppose $p_1$, $p_2$, and $p_3$ are its proper base points, and $q_1$, $q_2$, and $q_3$ are the proper base points of its inverse $\Phi ^{-1}$, named according to Figure \ref{F.C1}.
If $C$ is a curve of  degree $d(C)$, then the degree of its direct image $\Phi_{\ast}(C)$ is
\[
2d(C)- \sum_{i=1}^3 m(p_i,C).
\]
If moreover $C$ has no contractile components, the multiplicities of $\Phi_{\ast}(C)$ can also be predicted:
\[
m(q_k,\Phi_{\ast}(C))=d(C)-m(p_i,C)-m(p_j,C),\quad i\neq j\neq k\in\{1,2,3\}.
\]
If $\mathcal{F}$  is a foliation of degree $d(\mathcal{F})$, then the degree of the foliation
$\Phi _{\ast}\mathcal{F}$ (with isolated singularities) is equal to
\[
2(d(\mathcal{F})+1)- \sum_{i=1}^3\ell(p_i,\mathcal{F}).
\]
Furthermore
\[
\ell(q_k,\Phi _{\ast}\mathcal{F})=d(\mathcal{F})+2-\ell(p_i,\mathcal{F})-\ell(p_j,\mathcal{F}),\quad i\neq j\neq k\in\{1,2,3\}.
\]
\end{lemma}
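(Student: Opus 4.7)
The strategy is to reduce both the curve and the foliation assertions to intersection-theoretic identities on the blow-up surface $S$ appearing in the factorization $\Phi=\pi'\circ\pi^{-1}$ from Subsection~\ref{SS.Cre}. The Picard group of $S$ is freely generated by $H=\pi^{*}(\text{line})$ and the three exceptional divisors $E_1,E_2,E_3$, with intersection form $H^2=1$, $H\cdot E_i=0$, $E_i\cdot E_j=-\delta_{ij}$. From the explicit description of $\pi'$ in Subsection~\ref{SS.Cre} one verifies the two fundamental identities
\[
(\pi')^{*}H' \;=\; 2H-E_1-E_2-E_3, \qquad E'_{q_k} \;=\; H-E_i-E_j \quad(\{i,j,k\}=\{1,2,3\}),
\]
reflecting that a line of $\CP^2_2$ pulls back to a smooth conic through $p_1,p_2,p_3$ and that $E'_{q_k}$ is the strict transform of the line $L_k=p_ip_j$.

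The curve formulas now drop out at once, either by invoking Lemma~\ref{L.CremonaOnCurves} with $d(\mathcal H)=2$ and $m(p_i,\mathcal H)=1$, or self-contained as follows: if $C$ has no contractile components, its strict transform in $S$ has class $\widetilde C = d(C)H-\sum_i m(p_i,C)E_i$, and the projection formula for $\pi'$ gives $d(\Phi_{*}C)=\widetilde C\cdot(\pi')^{*}H'$ together with $m(q_k,\Phi_{*}C)=\widetilde C\cdot E'_{q_k}$. Substituting the identities above and expanding yields $2d(C)-\sum_i m(p_i,C)$ and $d(C)-m(p_i,C)-m(p_j,C)$, respectively.

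For the foliation statements I would pass through the normal line bundle $N_{\mathcal F}$ that appears when viewing a foliation as a twisted $1$-form $\omega\in H^{0}(\Omega^1\otimes N_{\mathcal F})$ with isolated zeros. On $\CP^2$, a foliation of degree $d$ is defined by a $1$-form of homogeneous degree $d+1$, and combined with $c_1(\Omega^1_{\CP^2})=-3H$ this identifies $N_{\mathcal F}\cong\mathcal O_{\CP^2}(d+2)$. Under a single blow-up $\pi_p$ with exceptional divisor $E_p$, the pulled-back form $\pi_p^{*}\omega$ vanishes to order exactly $\ell(p,\mathcal F)$ along $E_p$ by the very definition of the vanishing order, and dividing out this common factor realises the defining form of $\pi_p^{*}\mathcal F$ as a section of $\Omega^1_{S'}\otimes \pi_p^{*}N_{\mathcal F}(-\ell(p,\mathcal F)E_p)$, giving the transformation rule
\[
N_{\pi_p^{*}\mathcal F} \;=\; \pi_p^{*}N_{\mathcal F}\otimes\mathcal O_{S'}\bigl(-\ell(p,\mathcal F)\,E_p\bigr).
\]
Iterating this at the three centres of $\pi$ and those of $\pi'$, and equating $c_1$ of the normal bundles of $\pi^{*}\mathcal F$ and $(\pi')^{*}\Phi_{*}\mathcal F$ via Remark~\ref{R.DirectImagFol}, one obtains in the Picard group of $S$ the identity
\[
(d(\mathcal F)+2)\,H-\sum_i \ell(p_i,\mathcal F)\,E_i \;=\; (d(\Phi_{*}\mathcal F)+2)(\pi')^{*}H'-\sum_k \ell(q_k,\Phi_{*}\mathcal F)\,E'_{q_k}.
\]

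Substituting the two basic identities for $(\pi')^{*}H'$ and $E'_{q_k}$ into the right-hand side and equating coefficients of $H,E_1,E_2,E_3$ produces a linear system of four equations in the four unknowns $d(\Phi_{*}\mathcal F)$ and $\ell(q_k,\Phi_{*}\mathcal F)$ for $k=1,2,3$. Summing the three $E_i$-equations eliminates $\sum_k \ell(q_k,\Phi_{*}\mathcal F)$ and, combined with the $H$-equation, yields the degree formula $d(\Phi_{*}\mathcal F)=2(d(\mathcal F)+1)-\sum_i \ell(p_i,\mathcal F)$; back-substitution into each $E_i$-equation then recovers $\ell(q_k,\Phi_{*}\mathcal F)=d(\mathcal F)+2-\ell(p_i,\mathcal F)-\ell(p_j,\mathcal F)$. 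The main obstacle I expect is the careful justification of the normal-bundle transformation rule, separating the dicritical and non-dicritical cases to confirm that the exponent is precisely $\ell(p,\mathcal F)$ in both situations and that the resulting divided form has the isolated zero locus required for it to define the saturated foliation $\pi_p^{*}\mathcal F$.
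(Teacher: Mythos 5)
Your proposal is correct, but it is worth noting that the paper does not actually prove Lemma \ref{L.MP}: it is imported wholesale, the curve statements from \cite[2.9.3]{A-C2002} (via Lemma \ref{L.CremonaOnCurves} specialized to $d(\mathcal{H})=2$, $m(p_i,\mathcal{H})=1$) and the foliation statements from Mendes--Pereira \cite[Lemma 1, p.~278]{MP}. What you supply is a genuine self-contained proof, and it is essentially the standard argument underlying those references: compute in $\mathrm{Pic}(S)$ with the identities $(\pi')^{*}H'=2H-E_1-E_2-E_3$ and $E'_{q_k}=H-E_i-E_j$, use the projection formula for the curve part, and for foliations track $c_1(N_{\mathcal F})=(d+2)H$ through the blow-ups via $N_{\pi_p^{*}\mathcal F}=\pi_p^{*}N_{\mathcal F}(-\ell(p,\mathcal F)E_p)$ and equate the two expressions for the normal bundle of the common foliation on $S$ given by Remark \ref{R.DirectImagFol}. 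Your linear algebra at the end checks out (the $H$-coefficient and the three $E_i$-coefficients do determine $d(\Phi_{*}\mathcal F)$ and the three $\ell(q_k,\Phi_{*}\mathcal F)$ exactly as claimed). The one step you rightly flag as needing care, the exponent $\ell$ rather than $m$ in the normal-bundle rule, is in fact immediate from the paper's own definition of $\ell(p,\mathcal F)$ as the exact vanishing order of $\pi_p^{*}\omega$ along $E_p$ (so that $z^{-\ell}\pi_p^{*}\omega$ is the defining form of the saturated pullback in both the dicritical case $\ell=m+1$ and the non-dicritical case $\ell=m$); you should also note, for the degree formula in the presence of contractile components, that the formula is additive over components and a contracted line $H-E_i-E_j$ meets $(\pi')^{*}H'$ in $0$, which is consistent. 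What your approach buys is independence from the external references; what the paper's citation buys is brevity and the more general Lemma \ref{L.CremonaOnCurves} for arbitrary Cremona maps.
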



We shall use this result to prove its generalization to whatever quadratic plane Cremona map, no matter its type. As a previous step we will need a technical result which describes the local behavior of the action of plane Cremona maps on foliations at any non-base (proper or infinitely near) point, including at points on its maximal contractile curve $C_{\Phi}$.

\begin{lemma}\label{L.LocalCrem}
Let $\mathcal{F}$ be a projective planar foliation and suppose $p$ is a (proper or infinitely near) point in the plane, not being a base point of the plane Cremona map $\Phi $. 
Then it holds
\[
\ell( p,\mathcal{F})= \ell(\Phi _{\ast}( p),\Phi _{\ast} \mathcal{F}).
\]
\end{lemma}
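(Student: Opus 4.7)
The plan is to exploit the identity of pullback foliations on a suitable common resolution, together with the invariance of the vanishing order under further blow-ups (Remark \ref{R.UniversalPropertyBlowingUp}).

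First, I would choose a factorization $\Phi = \pi' \circ \pi^{-1}$, where $\pi : S \to \CP^2_1$ and $\pi' : S \to \CP^2_2$ are compositions of blow-ups, engineered so that the point $p' := (\pi^{-1})_{\ast}(p)$ is a \emph{proper} point of $S$. Since $p$ is not a base point of $\Phi$, $(\pi^{-1})_{\ast}(p)$ is a well-defined (proper or infinitely near) point of $S$ for the minimal factorization; by performing additional blow-ups on both sides along the chain of points preceding $p'$ (and along the chain preceding its image on the $\CP^2_2$ side), we may assume that $p'$ is proper on $S$. By construction $\pi'_{\ast}(p') = \Phi_{\ast}(p)$, and by Remark \ref{R.DirectImagFol} the equality of foliations
\[
\pi^{\ast}\mathcal{F} \;=\; (\pi')^{\ast}\Phi_{\ast}\mathcal{F}
\]
continues to hold on $S$ for this extended factorization.

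Second, I would unwind the definitions. By the definition of the vanishing order at an infinitely near point and Remark \ref{R.UniversalPropertyBlowingUp}, the value $\ell(p,\mathcal{F})$ can be computed as $\ell(p',\pi^{\ast}\mathcal{F})$ at the proper point $p'$ of $S$ (that is, as the order of vanishing of $\pi_{p'}^{\ast}\pi^{\ast}\omega$ along the exceptional divisor $E_{p'}$, for $\omega$ a local generator of $\mathcal{F}$). Similarly, since $\Phi_{\ast}(p) = \pi'_{\ast}(p')$, we have
\[
\ell\bigl(\Phi_{\ast}(p),\,\Phi_{\ast}\mathcal{F}\bigr) \;=\; \ell\bigl(p',\,(\pi')^{\ast}\Phi_{\ast}\mathcal{F}\bigr).
\]

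Third, I would just combine the two previous steps: the identity $\pi^{\ast}\mathcal{F} = (\pi')^{\ast}\Phi_{\ast}\mathcal{F}$ on $S$ implies that the two vanishing orders on the right-hand sides above coincide, yielding the stated equality.

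The main obstacle is the first step: justifying that a common resolution $S$ can be chosen in which $p$ is realized as a proper point on both the domain and target sides simultaneously, and that the foliation-pullback identity of Remark \ref{R.DirectImagFol} persists for this enlarged factorization. This amounts to a standard application of the universal property of blowing-up: any further blow-up on one side can be matched by the blow-up of the corresponding point on the other side (well-defined because $p$ is not a base point, so the birational correspondence is a local isomorphism near $p'$), and this preserves the equality $\pi^{\ast}\mathcal{F} = (\pi')^{\ast}\Phi_{\ast}\mathcal{F}$.
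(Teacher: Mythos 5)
Your proposal is correct and follows essentially the same route as the paper: transfer both vanishing orders to the common surface $S$ via Remark \ref{R.UniversalPropertyBlowingUp}, then conclude from the identity $\pi^{\ast}\mathcal{F} = (\pi')^{\ast}\Phi_{\ast}\mathcal{F}$ of Remark \ref{R.DirectImagFol}. The only difference is your extra first step of enlarging the factorization so that $p'$ becomes a proper point of $S$, which is harmless but unnecessary, since $\ell$ is already defined at infinitely near points and the paper works directly with $(\pi^{-1})_{\ast}(p)$ as such.
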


\begin{proof}

Since $p$ is not a base point of $\Phi $, $(\pi^{-1}) _{\ast}( p)$ is a proper or infinitely near point on $S$, and by  Remark \ref{R.UniversalPropertyBlowingUp} we have $\ell( p,\mathcal{F})= \ell((\pi^{-1}) _{\ast}( p),\pi^{\ast}\mathcal{F})$. Similarly and using the equalities $\pi ^{\ast} \mathcal{F} = (\pi ')^{\ast} \Phi_{\ast}\mathcal{F}$ (Remark \ref{R.DirectImagFol}) and $\Phi _{\ast}(p)= \pi '_{\ast} (\pi ^{-1})_{\ast}(p)$,
it holds
\[
\ell(\Phi _{\ast}( p),\Phi _{\ast}\mathcal{F})= \ell((\pi '^{-1}) _{\ast}( \Phi _{\ast}( p)),(\pi ')^{\ast}\Phi _{\ast}\mathcal{F}))= \ell((\pi^{-1}) _{\ast}( p),\pi^{\ast}\mathcal{F}) =\ell( p,\mathcal{F}).
\]
\end{proof}

Notice that the hypothesis of the previous Lemma \ref{L.LocalCrem} includes non-base point lying on the maximal contractile curve $C_{\Phi}$ (cf. Lemma \ref{L.MaximalContractile}).

\begin{theorem}\label{P.MP}
Let $\Phi $ be any  quadratic plane Cremona map, and suppose $p_1$, $p_2$, and $p_3$ are its base points, and $q_1$, $q_2$, and $q_3$ are the base points of the inverse $\Phi^{-1}$, named according a suitable ordering.
If $C$ is a curve of  degree $d(C)$, then the degree of its direct image $\Phi _{\ast}(C)$ is
\[
2d(C)- \sum_{i=1}^3 m(p_i,C).
\]
If moreover $C$ has no contractile components, then
\[
m(q_k,\Phi _{\ast}(C))=d(C)-m(p_i,C)-m(p_j,C),\quad i\neq j\neq k\in\{1,2,3\}.
\]
If $\mathcal{F}$  is a foliation of degree $d(\mathcal{F})$, then the degree of the foliation
$\Phi _{\ast}\mathcal{F}$ (with isolated singularities) is equal to
\[
2(d(\mathcal{F})+1)- \sum_{i=1}^3 \ell(p_i,\mathcal{F}).
\]
Furthermore
\[
\ell(q_k,\Phi _{\ast}\mathcal{F})=d(\mathcal{F})+2-\ell(p_i,\mathcal{F})-\ell(p_j,\mathcal{F}),\quad i\neq j\neq k\in\{1,2,3\}.
\]
\end{theorem}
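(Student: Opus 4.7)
The statements about curves are direct consequences of Lemma \ref{L.CremonaOnCurves} applied to the homaloidal net $\mathcal{H}$ of a quadratic Cremona map. By the classification in Subsection \ref{SS.Cre}, such a net always has $d(\mathcal{H})=2$ and three simple base points, i.e.\ $m(p_i,\mathcal{H})=1$ for $i=1,2,3$. Plugging these into the first formula of Lemma \ref{L.CremonaOnCurves} gives $d(\Phi_{\ast}(C))=2d(C)-\sum_i m(p_i,C)$ at once. For the local multiplicity formula, one only needs to identify the coefficients $e_{q_k}(p_i,\mathcal{H})$: reading them from the blow-up/blow-down factorization displayed for each of the three types (C1), (C2), (C3) in Subsection \ref{SS.Cre} (or, equivalently, by a direct computation in the respective normal form of Proposition \ref{P.Cre}), one sees in every case that $e_{q_k}(p_i,\mathcal{H})=1$ when $i\neq k$ and $e_{q_k}(p_k,\mathcal{H})=0$. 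This yields the claimed formula for $m(q_k,\Phi_{\ast}(C))$.

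For the foliation statements, the plan is to mimic, in the 1-form setting, the argument that proves Lemma \ref{L.CremonaOnCurves}, exploiting the numerical identity $\pi^{\ast}\mathcal{F}=(\pi')^{\ast}\Phi_{\ast}\mathcal{F}$ of Remark \ref{R.DirectImagFol} for the factorization $\Phi=\pi'\circ\pi^{-1}$ of Subsection \ref{SS.Cre}. A foliation $\mathcal{F}$ of degree $d$ is given by a projective 1-form $\omega=A\,dX+B\,dY+C\,dZ$ whose coefficients are homogeneous of degree $d+1$; under $\pi_p^{\ast}$ the exceptional divisor $E_p$ appears as a factor of order exactly $\ell(p,\mathcal{F})$, by the very definition of $\ell$ in Subsection \ref{S.LInv}. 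Repeating this at each of the three blow-ups composing $\pi$ and then doing the dual bookkeeping along the three blow-downs composing $\pi'$, the degree of $\Phi_{\ast}\mathcal{F}$ ends up being $2(d(\mathcal{F})+1)-\sum_i\ell(p_i,\mathcal{F})$: the factor $d(\mathcal{F})+1$ (instead of $d(C)$) accounts for the degrees of the coefficients of $\omega$, the factor $2$ for $d(\mathcal{H})=2$, and each subtracted $\ell(p_i,\mathcal{F})$ is the order of vanishing of $\pi^{\ast}\omega$ on the exceptional component $E_{p_i}$ that must be removed in order to recover a foliation with isolated singularities. The formula for $\ell(q_k,\Phi_{\ast}\mathcal{F})$ is then obtained by applying the same computation to $\Phi^{-1}$ and $\Phi_{\ast}\mathcal{F}$, using that the degree of $\Phi^{-1}$-homaloidal net is also $2$ with simple base points $q_1,q_2,q_3$.

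The main obstacle is to handle types (C2) and (C3) uniformly with (C1), since in those cases some of the $p_i$ (resp.\ $q_k$) are infinitely near, so the corresponding exceptional divisors $E_{p_i}$ (resp.\ $E'_{q_k}$) do not appear after a single blow-up but only after a chain of them. Two ingredients unify the three cases: \emph{(a)} Remark \ref{R.UniversalPropertyBlowingUp}, which guarantees that $\ell(p,\mathcal{F})$ depends only on the point $p$ and not on the number of preceding blow-ups used to reach it, so the contributions subtracted at each step are still exactly $\ell(p_i,\mathcal{F})$; and \emph{(b)} Lemma \ref{L.LocalCrem}, which rules out any spurious creation of new singularities or tangencies away from the base points of $\Phi^{-1}$, so the only corrections to the degree arise at the $p_i$ and the only $q_k$-local changes arise from the $p_i$ themselves. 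With these two ingredients in hand, the argument of \cite[Lemma~1]{MP} for type (C1) carries over verbatim to types (C2) and (C3), and Lemma \ref{L.MP} is recovered as the special case of Theorem \ref{P.MP} for ordinary Cremona maps.
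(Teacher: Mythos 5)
Your treatment of the curve statements is fine and essentially matches the paper, which likewise reduces them to known facts about homaloidal nets (the paper cites \cite[2.9.3]{A-C2002}, \cite[2.8.7]{A-C2002} and \cite[2.8.8]{A-C2002}; your identification of the coefficients $e_{q_k}(p_i,\mathcal{H})$ from the normal forms is an acceptable concrete substitute).

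The foliation part, however, has a genuine gap at exactly the point where the theorem is nontrivial. Your plan is to redo the Mendes--Pereira computation directly on the single factorization $\Phi=\pi'\circ\pi^{-1}$, subtracting the vanishing orders $\ell(p_i,\mathcal{F})$ of $\pi^{\ast}\omega$ along the exceptional components, and you assert that the type (C1) argument ``carries over verbatim'' to types (C2) and (C3). It does not. For an ordinary map there are three contractile lines of $\Phi^{-1}$, each contracted to one proper base point $p_i$, and the common factor removed from $(\Phi^{-1})^{\ast}\omega$ along each is governed by the single invariant $\ell(p_i,\mathcal{F})$. For type (C2) there are only \emph{two} contractile lines ($L'_1=q_2q_1$ and $L'_3=q_2q_3$) and one non-expansive exceptional divisor $E_{p_2}=E'_{q_2}$; the line $L'_1$ is contracted to $p_1$ but its total transform on $S$ also carries $E_{p_2}$, so the order of the common factor along $L'_1$ is not $\ell(p_1,\mathcal{F})$ but a combination of $\ell(p_1,\mathcal{F})$ and $\ell(p_2,\mathcal{F})$ dictated by the proximity relation $p_2\rightarrow p_1$ (a value-type formula in the spirit of \eqref{Eq.multvalues}, which you never establish for foliations). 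For type (C3) the situation is worse still, with a single contractile line absorbing three invariants along a chain. Neither of the two ingredients you invoke closes this: Remark \ref{R.UniversalPropertyBlowingUp} only says $\ell$ is well defined at infinitely near points, and Lemma \ref{L.LocalCrem} concerns \emph{non-base} points, so it cannot produce the required bookkeeping at $p_2$ and $p_3$.

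The paper avoids this computation entirely by a different route: it factors a type (C2) map as a composition $\Phi_2\circ\Phi_1$ of two \emph{ordinary} quadratic maps (via \cite[8.5.1]{A-C2002}), applies Lemma \ref{L.MP} to each factor, and uses Lemma \ref{L.LocalCrem} only to transport $\ell$ at the auxiliary intermediate points $a_4={\Phi_1}_{\ast}(p_2)$ and $a_2$; the contribution $\ell(q,\mathcal{F})$ of the auxiliary base point $q$ cancels in the final sum. Type (C3) is then handled by composing two type (C2) maps. If you want to keep your direct approach you must prove, for types (C2) and (C3), the precise order of vanishing of $(\Phi^{-1})^{\ast}\omega$ along each contractile line in terms of the $\ell(p_i,\mathcal{F})$ and the proximity relations; otherwise you should switch to the factorization argument.
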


\begin{proof}
The assertion on curves comes from applying together \cite[2.9.3]{A-C2002}, \cite[2.8.7]{A-C2002} and \cite[2.8.8]{A-C2002}.
We shall prove the claim on foliations by factorizing the quadratic plane Cremona map $\Phi $ as the composition of ordinary ones, and then applying previous Lemma \ref{L.MP} to each of them.

\smallskip

If $\Phi $ is of type (C1) we are done by Lemma \ref{L.MP}. So, assume first $\Phi $ is of type (C2).
We name the base points such that $p_1$ and $p_3$ are the proper base points of $\Phi $ and  $p_2$ is infinitely near to $p_1$, and that $q_1$ and $q_3$ are the proper base points of $\Phi^{-1}$ and $q_2$ is infinitely near to $q_1$.
According to \cite[8.5.1]{A-C2002}, the map $\Phi $ factorizes as the composition of two ordinary ones,
$\Phi = \Phi _{2}\circ \Phi _{1}$, where $\Phi _{1}$ is a map of type (C1) whose ordinary base points are $p_1$, $p_3$ and whatever proper point $q$ not on $C_{\Phi}$, and $\Phi _{2}^{-1}$ is a map of type (C1) whose ordinary base points are $q_1$, $q_3$ and  $\Phi (q)=\Phi _{\ast}(q)$.
Denote by $a_1$, $a_2$ and $a_3$ the proper base points of $\Phi ^{-1}_{1}$, with $\Phi ^{-1}_{1} (a_2  a_3)= p_1$, $\Phi ^{-1}_{1} (a_1  a_3)= q$ and $\Phi ^{-1}_{1} (a_1  a_2)= p_3$. The infinitely near point $p_2$ is mapped to $a_4:={\Phi _{1}}_{\ast}(p_2)$, which is a proper point lying on the line $a_2 a_3$, and it is different from $a_2$ and $a_3$.
This point $a_4$ must be a base point of $\Phi _{2}$, since the line $q_1 q_3$ is contracted to $p_2$ by $\Phi ^{-1}$ and hence to $a_4$ by $\Phi ^{-1}_{2} = \Phi _{1}\circ \Phi ^{-1}$.
On the other side, since $\Phi _{2} (a_1  a_3) = (\Phi  \circ \Phi ^{-1}_{1}) (a_1  a_3)= \Phi (q)$, it follows invoking \cite[4.2.5]{A-C2002} that the common base points of $\Phi _{2}$ and $\Phi ^{-1}_{1} $ are $a_1$ and $a_3$. Thus, the base points of $\Phi _{2}$ are $a_1$, $a_3$ and $a_4$, with $\Phi _{2} (a_1  a_3)= \Phi (q)$, $\Phi _{2} (a_1  a_4)= q_3$, $\Phi _{2} (a_3  a_4)= q_1$. Moreover $a_2 a_3=a_3 a_4$ and $ a_2 = {\Phi ^{-1}_{2}}_{\ast} (q_2)$.
\smallskip

Applying Lemma \ref{L.MP} to the Cremona map $\Phi _{1}$ we obtain
\begin{equation*}
\begin{split}
d ({\Phi _1}_{\ast}(\mathcal{F}))&= 2(d(\mathcal{F})+1)- \ell(p_1,\mathcal{F})- \ell(p_3,\mathcal{F})- \ell(q,\mathcal{F}) ,\\
\ell(a_1,{\Phi _1} _{\ast}\mathcal{F})&= d(\mathcal{F})+2-\ell(q,\mathcal{F})-\ell(p_3,\mathcal{F}),\\
\ell(a_2,{\Phi _1} _{\ast}\mathcal{F})&= d(\mathcal{F})+2-\ell(p_1,\mathcal{F})-\ell(p_3,\mathcal{F}), \\
\ell(a_3,{\Phi _1} _{\ast}\mathcal{F})&= d(\mathcal{F})+2-\ell(p_1,\mathcal{F})-\ell(q,\mathcal{F}).
\end{split}
\end{equation*}
From this and applying Lemma \ref{L.MP} again, now to the Cremona map $\Phi _{2}$, we infer that ${\Phi }_{\ast}\mathcal{F}= {\Phi _2}_{\ast} ( {\Phi _1}_{\ast}\mathcal{F})$ has degree
\begin{equation*}
\begin{split}
2 \left( 2(d(\mathcal{F})+1)- \ell(p_1,\mathcal{F})- \ell(p_3,\mathcal{F})- \ell(q,\mathcal{F})  \right) + 2
- \ell(a_1,{\Phi _1} _{\ast}\mathcal{F}) -\ell(a_3,{\Phi _1} _{\ast}\mathcal{F}) & \\ - \ell(a_4,{\Phi _1} _{\ast}\mathcal{F})=
 2(d(\mathcal{F})+1)- \ell(p_1,\mathcal{F}) - \ell(p_2,\mathcal{F})- \ell(p_3,\mathcal{F}), \\
\end{split}
\end{equation*}
since $\ell(a_4,{\Phi _1} _{\ast}\mathcal{F})= \ell({\Phi _{1}}_{\ast}(p_2),{\Phi _1} _{\ast}\mathcal{F})=\ell(p_2,\mathcal{F})$ by applying Lemma\ref{L.LocalCrem}.
Furthermore, from Lemma \ref{L.LocalCrem} again we have $\ell(q_2,\Phi _{\ast}\mathcal{F})=  \ell(a_2,{\Phi _1} _{\ast}\mathcal{F})$ and hence
\begin{equation*}
\begin{split}
\ell(q_1,\Phi _{\ast}\mathcal{F})& =d({\Phi _1} _{\ast}\mathcal{F})+2-\ell(a_3,{\Phi _1} _{\ast}\mathcal{F})-\ell(a_4,{\Phi _1} _{\ast}\mathcal{F})= d(\mathcal{F})+2-\ell(p_2,\mathcal{F})-\ell(p_3,\mathcal{F}), \\
\ell(q_2,\Phi _{\ast}\mathcal{F})& =  \ell(a_2,{\Phi _1} _{\ast}\mathcal{F})=d(\mathcal{F})+2-\ell(p_1,\mathcal{F})-\ell(p_3,\mathcal{F}), \\
\ell(q_3,\Phi _{\ast}\mathcal{F})& =d({\Phi _1} _{\ast}\mathcal{F})+2-\ell(a_1,{\Phi _1} _{\ast}\mathcal{F})-\ell(a_4,{\Phi _1} _{\ast}\mathcal{F})=
d(\mathcal{F})+2-\ell(p_1,\mathcal{F})-\ell(p_2,\mathcal{F}).
\end{split}
\end{equation*}


\smallskip

Assume now $\Phi $ is of type (C3).
We name the base points such that $p_1$ is the proper base point of $\Phi$, $p_2$ is infinitely near to $p_1$ and $p_3$ is infinitely near to $p_2$, and $q_1$ is the proper base point of $\Phi^{-1}$, $q_2$ is infinitely near to $q_1$ and $q_3$ is infinitely near to $q_2$.
According to \cite[8.5.2]{A-C2002}, the map $\Phi $ factorizes as
$\Phi = \Phi _{2}\circ \Phi _{1}$, where $\Phi _{1}$ is a map of type (C2) whose base points are $p_1$, $p_3$ and whatever proper point $q$ not aligned with $p_1$ and $p_3$ (that is, $q$ is not on $C_{\Phi}$), and $\Phi _{2}^{-1}$ is a map of type (C2) whose base points are $q_1$, $q_3$ and  $\Phi (q)=\Phi _{\ast}(q)$.
Denote by $a_1$, $a_2$ and $a_3$ the base points of $\Phi ^{-1}_{1}$; $a_1$, $a_3$ are proper points and $a_2$ is infinitely near to $a_1$.
The infinitely near point $p_3$ is mapped to $a_4:={\Phi _{1}}_{\ast}(p_3)$, which is a proper point lying on the line $a_1 a_3$, and it is different from $a_1$ and $a_3$.
This point $a_4$ must be a base point of $\Phi _{2}$, since the line $q_1 q_2$ is contracted to $p_3$ by $\Phi ^{-1}$ and hence to $a_4$ by $\Phi ^{-1}_{2} = \Phi _{1}\circ \Phi ^{-1}$.
On the other side, since $a_1 a_3$ and $a_1 a_2$ are contractile lines by $\Phi ^{-1}_{1}$ mapping to $p_2$ and $q$ respectively, they are also contractile lines by $\Phi _{2} = \Phi \circ  \Phi ^{-1}_{1}$ mapping to $q_2$ and $\Phi (q)$ respectively. Invoking \cite[4.2.5]{A-C2002} it follows that the common base points of $\Phi _{2}$ and $\Phi ^{-1}_{1} $ are $a_1$ and $a_2$. Thus, the base points of $\Phi _{2}$ are $a_1$, $a_2$ and $a_4={\Phi _{1}}_{\ast}(p_3)$. Moreover $ a_3 = {\Phi ^{-1}_{2}}_{\ast} (q_3)$.

\smallskip

Applying twice the result we have just proved for quadratic Cremona maps of type (C2), and using from Lemma \ref{L.LocalCrem} that $\ell(a_4,{\Phi _1} _{\ast} \mathcal{F})= \ell(p_3,\mathcal{F})$ and $\ell(q_3,\Phi _{\ast} \mathcal{F})=  \ell(a_3,{\Phi _1} _{\ast}\mathcal{F})$,
we infer that
\begin{equation*}
\begin{split}
d({\Phi }_{\ast}\mathcal{F}) &= 2 \left( 2(d(\mathcal{F})+1)- \ell(p_1,\mathcal{F})- \ell(p_2,\mathcal{F})- \ell(q,\mathcal{F})  \right) + 2
- \ell(a_1,{\Phi _1} _{\ast}\mathcal{F}) -\ell(a_2,{\Phi _1} _{\ast}\mathcal{F}) \\ & - \ell(a_4,{\Phi _1} _{\ast}\mathcal{F})=
 2(d(\mathcal{F})+1)- \ell(p_1,\mathcal{F}) - \ell(p_2,\mathcal{F})- \ell(p_3,\mathcal{F}), \\
\ell(q_1,\Phi _{\ast}\mathcal{F})& =d({\Phi _1} _{\ast}\mathcal{F})+2-\ell(a_2,{\Phi _1} _{\ast}\mathcal{F})-\ell(a_4,{\Phi _1} _{\ast}\mathcal{F})= d(\mathcal{F})+2-\ell(p_2,\mathcal{F})-\ell(p_3,\mathcal{F}), \\
\ell(q_2,\Phi _{\ast}\mathcal{F})& =d({\Phi _1} _{\ast}\mathcal{F})+2-\ell(a_1,{\Phi _1} _{\ast}\mathcal{F})-\ell(a_4,{\Phi _1} _{\ast}\mathcal{F})=
d(\mathcal{F})+2-\ell(p_1,\mathcal{F})-\ell(p_3,\mathcal{F}), \\
\ell(q_3,\Phi _{\ast}\mathcal{F})&=  \ell(a_3,{\Phi _1} _{\ast}\mathcal{F})=d(\mathcal{F})+2-\ell(p_1,\mathcal{F})-\ell(p_2,\mathcal{F}).
\end{split}
\end{equation*}

\end{proof}

\subsection{Transforming quadratic foliations by quadratic plane Cremona maps}

In this work we focus on foliations $\mathcal{F}$  of degree $d(\mathcal{F})=2$ or, equivalently, on quadratic polynomial differential systems. From Theorem \ref{P.MP}, we infer a geometric characterization of the invariance of the degree of quadratic foliations by the action of the quadratic plane Cremona maps.

This problem was already tackled in \cite{CD2015}: a projective foliation $\mathcal{F}$ is called \emph{numerically invariant} under the action a plane Cremona map $\Phi$ if the degree of $\Phi _{\ast}\mathcal{F}$ equals the degree of the original $\mathcal{F}$. In \cite{CD2015} they prove that any quadratic foliation numerically invariant under the action of an ordinary quadratic Cremona map is transversely projective, and they give normal forms in case of numerically invariant pairs $(\Phi , \mathcal{F})$ where the map $\Phi$ is quadratic and the foliation $\mathcal{F}$ is projective quadratic.

In this setting we provide sharper results: we geometrically characterize numerically invariant quadratic pairs, and in forthcoming sections we will be interested in normal forms of numerically invariant pairs $(\Phi , \mathcal{F})$ where the map $\Phi$ is quadratic and the foliation $\mathcal{F}$ is affine quadratic. Recall from Proposition \ref{P.InvariantLine} that a general projective quadratic foliation does not restrict to any affine quadratic foliation.
It is worth to notice that, although the degree of a foliation is a \emph{global} feature, the characterization of this paper will be given in terms of \emph{local} features of the initial foliation: a direct inspection at the multiplicities and eigenvalues of the singular points suffices to elucidate the existence of some quadratic plane Cremona map capable to transform a quadratic foliation maintaining the degree invariant.


\begin{corollary}\label{T.Main.G}
Let  $\mathcal{F}$  be a complex projective foliation of degree $d(\mathcal{F})=2$. Then any  quadratic plane Cremona map $\Phi $ transforms the foliation into a foliation  ${\Phi }_{\ast}\mathcal{F}$ of degree lower than or equal to $6$.
Moreover, the transformed projective foliation ${\Phi }_{\ast}\mathcal{F}$  is quadratic if and only if
the three base points $p_1$, $p_2$ and $p_3$ of $\Phi $ satisfy one of the following conditions, in which we assume that $\{i,j,k\}=\{1,2,3\}$ and that the point is not dicritical unless it is explicitly mentioned:
\begin{enumerate}[{\rm(1)}]
\item $m(p_i, \mathcal{F})=3$, $m(p_j, \mathcal{F})=m(p_k, \mathcal{F})=0$ and $p_i$ is dicritical;
\item $m(p_i, \mathcal{F})=3$, $m(p_j, \mathcal{F})=1$ and $m(p_k, \mathcal{F})=0$;
\item $m(p_i, \mathcal{F})=2$, $m(p_j, \mathcal{F})=1$, $m(p_k, \mathcal{F})=0$ and $p_i$ is dicritical;
\item $m(p_i, \mathcal{F})=1$, $m(p_j, \mathcal{F})=2$, $m(p_k, \mathcal{F})=0$ and $p_i$ is dicritical;
\item $m(p_i, \mathcal{F})=m(p_j, \mathcal{F})=1$, $m(p_k, \mathcal{F})=0$ and $p_i$ and  $p_j$ are dicritical;
\item $m(p_i, \mathcal{F})=2$ and  $m(p_j, \mathcal{F})=m(p_k, \mathcal{F})=1$;
\item $m(p_i, \mathcal{F})=m(p_j, \mathcal{F})=m(p_k, \mathcal{F})=1$ and $p_i$ is dicritical.
\end{enumerate}
\end{corollary}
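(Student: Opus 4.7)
The plan is to specialize Theorem~\ref{P.MP} to $d(\mathcal{F})=2$, so that the degree formula becomes
\[
d(\Phi_{\ast}\mathcal{F}) \;=\; 6 - \sum_{i=1}^{3}\ell(p_i,\mathcal{F}).
\]
The first claim $d(\Phi_{\ast}\mathcal{F})\le 6$ then follows at once from $\ell(p_i,\mathcal{F})\geq 0$.

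For the characterization of quadraticity, the condition $d(\Phi_{\ast}\mathcal{F})=2$ is equivalent to $\sum_{i=1}^{3}\ell(p_i,\mathcal{F})=4$. I would then invoke the dichotomy recalled in Subsection~\ref{S.LInv}, namely
\[
\ell(p,\mathcal{F}) = \begin{cases} m(p,\mathcal{F}) & \text{if $p$ is not dicritical (including the case $m(p,\mathcal{F})=0$),}\\ m(p,\mathcal{F})+1 & \text{if $p$ is dicritical,}\end{cases}
\]
together with the bound $m(p,\mathcal{F})\leq 3$, which follows from the fact that a projective $1$-form defining $\mathcal{F}$ has polynomial coefficients of degree $m+1=3$. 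In particular, $\ell(p,\mathcal{F})\le 4$ at every point.

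At this point the proof reduces to a finite enumeration. I would list the (unordered) partitions of $4$ into three non-negative integers, each at most $4$, which are $(4,0,0)$, $(3,1,0)$, $(2,2,0)$ and $(2,1,1)$. For each such partition, I would expand every summand $\ell\in\{0,\dots,4\}$ as an admissible pair $(m,\varepsilon)$, where $\varepsilon=1$ encodes dicriticality, according to the elementary table
\[
\begin{array}{c|l}
\ell & (m,\varepsilon) \\ \hline
0 & (0,0) \\
1 & (1,0) \\
2 & (2,0),\ (1,1) \\
3 & (3,0),\ (2,1) \\
4 & (3,1).
\end{array}
\]
Running through the four partitions and cross-referencing with the above dictionary, modulo permutations of the unordered triple $\{p_1,p_2,p_3\}$, yields exactly the conditions (1)--(7) of the statement.

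The argument is essentially combinatorial bookkeeping. The main obstacle is handling the partition $(2,2,0)$ carefully: each of the two summands equal to $2$ admits independently two realizations as $(m,\varepsilon)$, and one must enumerate the resulting subcases without double-listing under the unordered indexing $\{i,j,k\}=\{1,2,3\}$, while also verifying that the remaining combinations genuinely correspond to the (ordered) cases as stated.
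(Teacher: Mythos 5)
Your overall route is the paper's own: the authors likewise dispose of the corollary in two lines by plugging $d(\mathcal{F})=2$ into Theorem~\ref{P.MP} and then enumerating how the vanishing orders $\ell(p_i,\mathcal{F})$ can sum to $4$, splitting each $\ell$ as $m$ or $m+1$ according to dicriticality. Your reduction to $\sum_i\ell(p_i,\mathcal{F})=4$, the bound $m(p,\mathcal{F})\le 3$ (which for infinitely near points also needs Remark~\ref{R.Seiden}), and the exclusion of $(0,1)$ because dicritical points are singular, are all fine.

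The gap is exactly at the place you flag as ``the main obstacle'' but do not actually resolve. The partition $(2,2,0)$ has \emph{three} realizations: $(2,0)+(1,1)+(0,0)$ (case (4)), $(1,1)+(1,1)+(0,0)$ (case (5)), and $(2,0)+(2,0)+(0,0)$ --- two non-dicritical points of algebraic multiplicity $2$ plus a regular point. This third realization is \emph{not} among conditions (1)--(7), so the bookkeeping does not ``yield exactly'' the stated list, and your proof of the ``only if'' direction does not close without an additional, non-combinatorial argument excluding it. If both multiplicity-$2$ points are proper, it is excluded because the Milnor numbers of a degree-$2$ foliation sum to $m^2+m+1=7$ while each point of algebraic multiplicity $2$ contributes at least $4$; but when one of the two is infinitely near the other (the relevant situation for a map of type (C2)) no such counting is available in your write-up, and in fact the configuration occurs: for the degree-$2$ foliation $\omega=y(y-x^2)\,dx+x^3\,dy$ the origin is a non-dicritical point with $m=2$ whose blow-up $v^2\,du+u^2\,dv$ has a non-dicritical point of multiplicity $2$ on the exceptional divisor, so $\ell=2+2+0=4$ with none of (1)--(7) satisfied. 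You must either supply an argument ruling this realization out or acknowledge it as an additional case; as written, the assertion that the enumeration reproduces (1)--(7) is unjustified. (A minor further remark, harmless for the ``if and only if'': the same Milnor-number bound shows $m(p,\mathcal{F})=3$ is impossible for a quadratic foliation, so cases (1) and (2) are vacuous --- another sign that pure $(\ell,m,\varepsilon)$ bookkeeping alone does not account for the exact shape of the paper's list.)
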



\begin{proof}
The different cases follow by direct application of Theorem \ref{P.MP}, knowing that both multiplicities and vanishing order are non-negative integers. We distinguish the different cases according to the possibilities that the vanishing order coincides with the multiplicity at some proper or infinitely near point (see Section \ref{S.LInv}).
\end{proof}

The result of Corollary \ref{T.Main.G} shows that the invariance of the degree is due to local properties of the foliation on the base points of the Cremona map.
We remark that the base points do not need to be singular points of the system. Indeed they can be either regular points or infinitely near singular points.
Next example shows that complex non-real singular points cannot be neglected when searching for Cremona transformations which provide new differential systems of a specific degree.

\begin{example}
Consider the Yablonski differential system \eqref{YabQS}. Recall that it has two complex non-real finite singular points. An affine complex transformation moves one of these two points to the origin. At infinity, we have the singular point $(0:1:0)$ and two complex non-real infinite singular points. The quotient of the eigenvalues at this infinite singular point is $2$. We apply the Cremona transformation (C2), see Proposition \ref{P.Cre}, to obtain a cubic projective 1-form that can be brought to the quadratic complex differential system
\[
\begin{split}
\dot x=&3 c (a+b)x+4 y+\frac{(c (4 c^2+1) (a+b)-(2 c^2-1) \Delta ) (a+b)}{4(c^2+1)}x^2+\frac{c^2 (a+b)-c   \Delta }{c^2+1}xy,\\
\dot y=&\frac{3 a^2-8 a b c^2-2 a b+3 b^2}2x-2    (a+b)cy+\frac{\gamma}{4(c^2+1)}x^2\\
&+\frac{c(12 a^2 c^2+7 a^2-10 a b+12 b^2 c^2+7 b^2)-  (a+b)(6c^2+1)\Delta}{2(c^2+1)}xy+\frac{2 c^2 (a+b)-2 c \Delta }{c^2+1}y^2,
\end{split}
\]
where $\Delta= \sqrt{4c^2(a-b)^2+(3a-b)(a-3b)}$ and
\begin{multline*}
\gamma=(4 c^2+1) (a+b) (4 a^2 c^2+3 a^2-4 a b c^2-6 a b+4 b^2 c^2+3 b^2)\\-c \Delta (8 a^2   c^2+5 a^2+8 a b c^2+2 a b+8 b^2 c^2+5 b^2).
\end{multline*}
From the invariant algebraic curve of \eqref{YabQS} we obtain a (complex) invariant algebraic curve of degree $4$.

\smallskip

This illustrates that the Cremona transformation can be applied also on complex singular points. Here the eigenvalues of the singular points $(0:0:1)$ and $(0:1:0)$ allow to apply a Cremona quadratic transformation of type (C2) to obtain a new quadratic differential system. Since $4c^2(a-b)^2+(3a-b)(a-3b)<0$, the new differential system is complex, non-real.
\end{example}

Quadratic foliations whose singular points are all simple are in the cases {\rm(5)} and {\rm (7)} of Corollary \ref{T.Main.G}. For such foliations the following sharper characterization holds.

\begin{theorem}\label{T.Main.SSP}
Let  $\mathcal{F}$  be a complex projective foliation of degree $d(\mathcal{F})=2$ whose singular points are all simple. There exists a quadratic plane Cremona map $\Phi $ transforming  $\mathcal{F}$ into a quadratic foliation  $\Phi_{\ast}(\mathcal{F})$ if and only if among the proper singular points of $\mathcal{F}$ there is a node with integer eigenvalues $(a,1)$, with $0< a \leq 3$, such that any line through it is either transversal to the foliation or a first order tangent.
\end{theorem}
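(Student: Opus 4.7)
My plan is to invoke Corollary~\ref{T.Main.G}. Since all singular points of $\mathcal{F}$ are simple, the algebraic multiplicity $m(p,\mathcal{F})\in\{0,1\}$ at every (proper or infinitely near) point $p$; hence only cases (5) and (7) of that corollary can occur at the base points of a quadratic Cremona map $\Phi$ preserving the quadratic degree. A first step is a local characterization: a simple singular point $p$ of $\mathcal{F}$ is dicritical if and only if its linear part is a nonzero scalar multiple of the identity (a \emph{star-node}). This follows from a direct blow-up calculation: writing $\omega=A\,dx+B\,dy$ with linear part $\omega_1=(c_{10}x+c_{01}y)\,dx+(d_{10}x+d_{01}y)\,dy$ and blowing up in the chart $x=u$, $y=uv$, one obtains $\pi_p^{\ast}\omega=u\,\tilde\omega$ with $\tilde\omega|_{u=0}=P(v)\,du$, where $P(v)=c_{10}+(c_{01}+d_{10})v+d_{01}v^2$. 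The exceptional divisor is non-invariant (that is, $p$ is dicritical) if and only if $P\equiv0$, which forces $c_{10}=d_{01}=0$ and $c_{01}+d_{10}=0$, equivalent to the linear part being a scalar multiple of the Euler form $y\,dx-x\,dy$. When $P\not\equiv0$, the roots of $P$ are in bijection with the eigen-directions of the linear part at $p$, and each root gives a simple infinitely near singular point of $\pi_p^{\ast}\mathcal{F}$ on $E_p$.

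The second step traces the tower of infinitely near singular points at a proper node $p$ with eigenvalues $(a,1)$, $a\in\Z_{>0}$. In eigen-coordinates the vector field has linear part $\dot x=ax$, $\dot y=y$. A direct computation in the chart $x=uv$, $y=v$ shows that $\pi_p^{\ast}\omega/v$ has linear part corresponding to the vector field $\dot u=(a-1)u$, $\dot v=v$, so we obtain an infinitely near node with eigenvalues $(a-1,1)$. Iterating this procedure $a-1$ times produces a star-node at the $(a-1)$-th infinitesimal neighborhood, which is dicritical by the first step. The hypothesis that every line through $p$ is either transversal or a first-order tangent guarantees that the quadratic polynomial $P(v)$ at each level has only simple roots (or vanishes identically at the star-node stage), and hence that the infinitely near singular points obtained at each level of the tower are simple, without higher-multiplicity jumps which would violate the hypotheses of cases (5) or (7) or would prevent the existence of a quadratic Cremona map.

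For sufficiency, given such a proper node $p$ with parameter $a\in\{1,2,3\}$ we build a quadratic Cremona map whose base cluster contains the tower. For $a=3$ the tower itself $p\to p'\to p''$ is a (C3) base cluster with $p''$ the dicritical star-node, so case (7) of Corollary~\ref{T.Main.G} applies. For $a=2$ we take $p$ and its infinitely near star-node $p'$ as the first two base points of a (C2) map, completed with a suitable proper third base point to fit case (7) or (5). For $a=1$ we take the proper star-node $p$ together with two suitable additional base points to realize a (C1) map. In each case we verify Proposition~\ref{P.ExistCre} (non-alignment of the base points) and the proximity/multiplicity conditions of Corollary~\ref{T.Main.G}. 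Conversely, assume $\Phi$ is a quadratic Cremona map preserving the quadratic degree of $\mathcal{F}$. By Corollary~\ref{T.Main.G} the base points fit case (5) or (7), so at least one base point $p_0$ is dicritical with $m(p_0,\mathcal{F})=1$. By Step~1, $p_0$ is a (proper or infinitely near) star-node, lying at the top of a tower of simple infinitely near singular points above some proper node $p$ of $\mathcal{F}$; tracing the tower backwards by the recursion of Step~2 identifies $p$ as a node with integer eigenvalues $(a,1)$ where $a\in\{1,2,3\}$, the upper bound being forced by there being only three base points. The line condition at $p$ follows because the simplicity of every infinitely near point in the tower rules out any higher-order tangent line through $p$.

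The main obstacle is the combinatorial verification, for each value of $a$ and each Cremona type, of the non-alignment of base points and of the proximity conditions (for instance, in the (C3) case, checking that the star-node $p''$ lies on $E_{p'}$ but not on the strict transform of $E_p$, so that $p''$ is proximate only to $p'$), together with a precise translation of the line condition at $p$ into the simplicity of the infinitely near singularities throughout the tower.
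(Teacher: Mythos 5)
Your proposal is correct and follows essentially the same route as the paper: reduce to cases (5) and (7) of Corollary~\ref{T.Main.G}, identify a simple dicritical point as a star-node, relate the tower of simple infinitely near singular points over a proper node with eigenvalue ratio $a$ to a star-node reached after $a-1$ blow-ups, and invoke Proposition~\ref{P.ExistCre} to build the map. Your explicit blow-up computations (the polynomial $P(v)$ and the eigenvalue recursion $(a,1)\mapsto(a-1,1)$) in fact supply details that the paper's own proof leaves implicit behind Remark~\ref{R.Seiden} and Figure~\ref{F.Combinacions}.
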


\begin{proof}
Under the hypothesis of the statement and according to Corollary \ref{T.Main.G}, the transformed projective foliation $\Phi_{\ast}(\mathcal{F})$ is quadratic if and only if
the three base points of $\Phi$ are in the cases {\rm(5)} or {\rm (7)} of Corollary \ref{T.Main.G}.
As noticed in Remark \ref{R.Seiden} the multiplicities of a foliation do not increase on further blowing-ups. Hence there can be no singular point of the foliation infinitely near to a regular one.
As regards the dicritical singular points, since they are simple by hypothesis, they can have no other dicritical singular point infinitely near to any of them.

This limits the possibilities for the base points of a quadratic plane Cremona map $\Phi$ to be one of those listed in Figure \ref{F.Combinacions}.
Observe that in any case there is some proper or infinitely near base point of $\Phi$ which is a star-node of the foliation $\mathcal{F}$. This gives the existence of the proper singular node with the restrictions of the statement.

Conversely, if the foliation $\mathcal{F}$ has a proper singular node satisfying the conditions of the statement, then after at most three blowing-ups we come to a star-node of the foliation. Then Proposition \ref{P.ExistCre} assures that there exists a plane Cremona map $\Phi$ by fixing its three base points as follows: take the infinitely near star-node and any point (proper or infinitely near) preceding it (which gives altogether at most three points), and complete to a trio by taking any other singular point of the foliation $\mathcal{F}$ (whose existence is guaranteed by B\'{e}zout's Theorem). Since we have constructed a Cremona map $\Phi$ which satisfies the conditions of case {\rm (7)} of Corollary \ref{T.Main.G}, we are done.
\end{proof}

\begin{remark}\label{R.descartant1}
Figure \ref{F.Combinacions} show the different coincidences that can occur, according to Theorem \ref{T.Main.SSP}, between the base points of the quadratic plane Cremona map $\Phi $ and the singular points of the quadratic foliation $\mathcal{F}$ when the degree of ${\Phi }_{\ast}\mathcal{F}$ remains invariant. Each figure represents the features of a class of pairs $(\mathcal{F}, \Phi ) $ modulus projectivity.
\end{remark}

\begin{figure}[h]
\centering
\includegraphics[width=5cm]{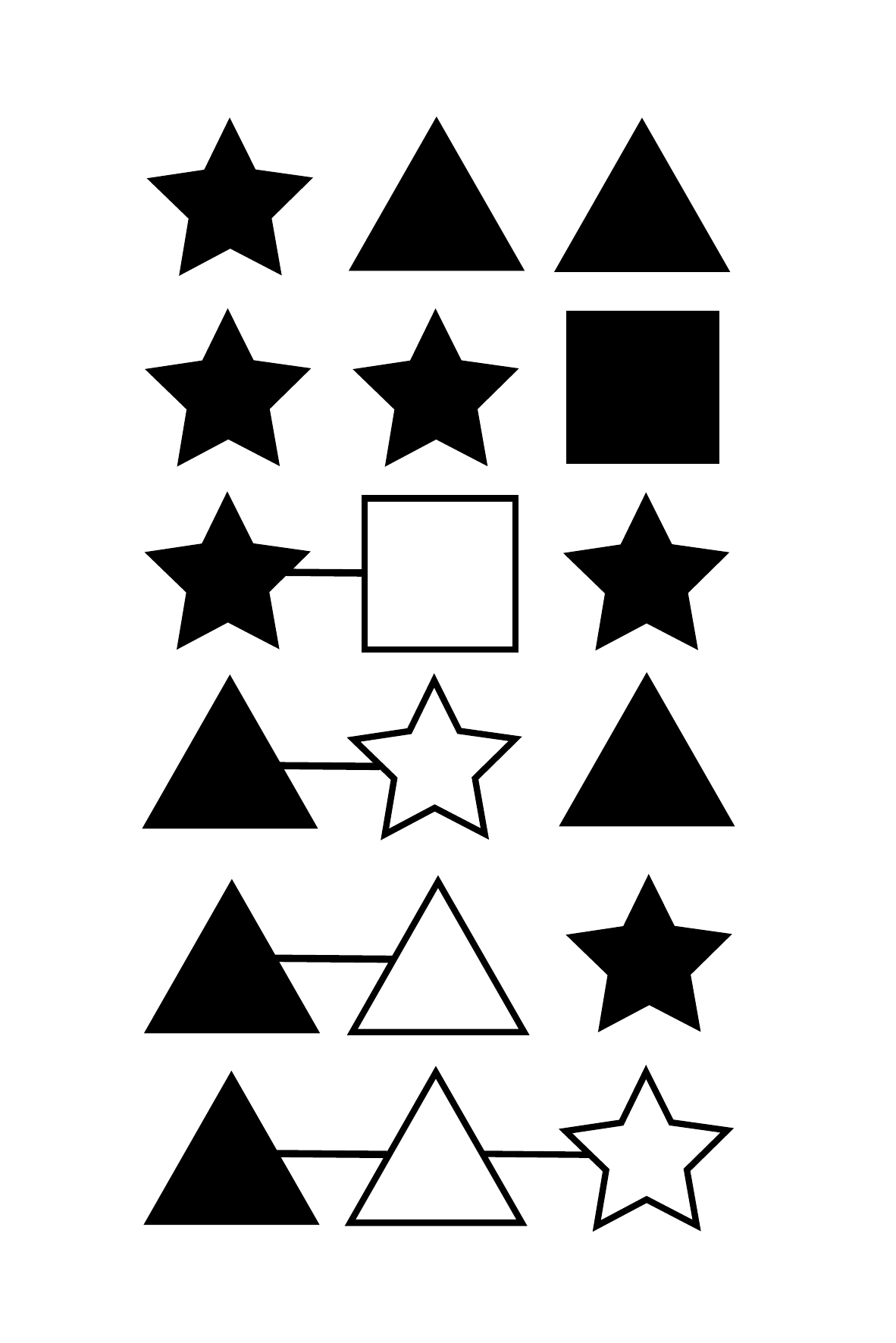}
\caption{\footnotesize The list of all possibilities for the base points of a quadratic plane Cremona map under the hypothesis of Theorem \ref{T.Main.SSP}. Proper points are represented black filled while infinitely near points are represented white filled (joined with an edge to the point which they are proximate to). The shapes relate the points to the foliation: square is a regular point, triangle is a non-dicritical singular point, star is a dicritical singular point.}\label{F.Combinacions}
\end{figure}

None of the known families of quadratic differential systems having an algebraic limit cycle has a multiple singular point, hence we shall only be concerned with cases {\rm(5)} and {\rm (7)} of Corollary \ref{T.Main.G} and the result of Theorem \ref{T.Main.SSP} will apply.

\begin{remark}\label{R.descartant2}
We note that none of the known families of quadratic differential systems having an algebraic limit cycle has a dicritical singular point, neither finite nor infinite. Hence  first, second, third and fifth cases in Figure \ref{F.Combinacions} are discarded.
%
%
We have checked the remaining forth and sixth configurations shown Figure \ref{F.Combinacions} and we have found out that the sixth case do not apply for the known families of quadratic differential systems having an algebraic limit cycle. Only the fourth case of Figure \ref{F.Combinacions} (which corresponds to the Cremona transformation (C2)) applies. Indeed it applies to Qin, Yablonski, CLS, CLS5 and CLS6 families.
\end{remark}

We provide in the next section the quadratic differential system that is obtained after applying the Cremona transformation (C2) to a quadratic differential system satisfying the hypothesis corresponding to the fourth case of Figure \ref{F.Combinacions}. From Remarks \ref{R.descartant1} and \ref{R.descartant2}, this is the only case in which we can apply Cremona transformations to the known families of quadratic differential systems having an algebraic limit cycle and obtain quadratic differential systems.


\section{Main results on limit cycles}\label{S.results}


Consider the quadratic differential system \eqref{e1}, that we write as
\begin{equation}\label{PQgen}
\begin{split}
\dot x&=p(x,y)=a_{00}+a_{10}x+a_{01}y+a_{20}x^2+a_{11}xy+a_{02}y^2,\\
\dot y&=q(x,y)=b_{00}+b_{10}x+b_{01}y+b_{20}x^2+b_{11}xy+b_{02}y^2.
\end{split}
\end{equation}

As we have seen in the previous section, only the situation described in the fourth case of Figure \ref{F.Combinacions} is useful for our purposes. In order to have the three base points of the Cremona transformation as in the fourth case of \ref{F.Combinacions}, we must take into account that:
\begin{enumerate}
\item The point $(0:0:1)$ is a singular point of system \eqref{PQgen} if and only if $a_{00}=b_{00}=0$.
\item The point $(0:1:0)$ is a singular point of system \eqref{PQgen} if and only if $a_{02}=0$.
\item In the previous case, the singular point infinitely near $(0:1:0)$ and on the direction of infinity is dicritical if and only if  $b_{02}=2a_{11}$.
\end{enumerate}

Now we can state our first main theorem.

\begin{theorem}\label{T.C2a}
Consider the differential system \eqref{PQgen}. If $a_{00}=b_{00}=0$, $a_{02}=0$ and $b_{02}=2a_{11}$,  then system \eqref{PQgen} is transformed, after the  quadratic Cremona transformation {\rm (C2)}, into the  quadratic differential system
\begin{equation}\label{PQC2a}
\dot x=x^3p\left(\frac 1x,\frac{y}{x^2}\right),\quad\dot y=2x^2y\,p\left(\frac 1x,\frac{y}{x^2}\right)-x^3q\left(\frac 1x,\frac{y}{x^2}\right).
\end{equation}
\end{theorem}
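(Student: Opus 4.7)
The strategy is a direct coordinate computation: express the Cremona map (C2) in affine coordinates, pull back the $1$-form of the foliation associated with \eqref{PQgen}, and verify that the rescaled $1$-form is polynomial of affine degree $2$ exactly under the four conditions listed in the statement.

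From the normal form \eqref{C2i} for the inverse of (C2), on the affine chart $W\neq 0$ of $\CP^2_2$ with coordinates $(\tilde x,\tilde y)=(U/W,V/W)$ the map (C2) is described by $x=1/\tilde x$, $y=\tilde y/\tilde x^{2}$. Using $dx=-d\tilde x/\tilde x^{2}$ and $dy= d\tilde y/\tilde x^{2}-(2\tilde y/\tilde x^{3})\,d\tilde x$, one pulls back the $1$-form $\omega=q(x,y)\,dx-p(x,y)\,dy$ attached to \eqref{PQgen}, and after multiplying by $\tilde x^{5}$ to clear denominators one obtains
\[
\tilde x^{5}\,\omega=\bigl[2\tilde x^{2}\tilde y\,p(1/\tilde x,\tilde y/\tilde x^{2})-\tilde x^{3}q(1/\tilde x,\tilde y/\tilde x^{2})\bigr]\,d\tilde x-\tilde x^{3}p(1/\tilde x,\tilde y/\tilde x^{2})\,d\tilde y.
\]
Since the rescaling does not affect the foliation, comparing with $\dot{\tilde y}\,d\tilde x-\dot{\tilde x}\,d\tilde y$ yields exactly the candidate system of the statement (after renaming $(\tilde x,\tilde y)$ as $(x,y)$).

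The second step is to check that this candidate is a polynomial system of degree $2$ precisely when the hypotheses hold. A monomial-by-monomial expansion gives
\[
\tilde x^{3}p(1/\tilde x,\tilde y/\tilde x^{2})=a_{00}\tilde x^{3}+a_{10}\tilde x^{2}+(a_{20}+a_{01}\tilde y)\,\tilde x+a_{11}\tilde y+\frac{a_{02}\tilde y^{2}}{\tilde x},
\]
and an analogous formula with $b_{ij}$ replacing $a_{ij}$ for $\tilde x^{3}q(1/\tilde x,\tilde y/\tilde x^{2})$. Polynomiality of $\dot{\tilde x}=\tilde x^{3}p(\cdots)$ forces $a_{02}=0$ and its degree being at most $2$ forces $a_{00}=0$. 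Expanding $\dot{\tilde y}=2\tilde x^{2}\tilde y\,p(\cdots)-\tilde x^{3}q(\cdots)$, the coefficient of $\tilde y^{2}/\tilde x$ is $2a_{11}-b_{02}$, whose vanishing gives $b_{02}=2a_{11}$, and the coefficient of $\tilde x^{3}$ is $-b_{00}$, whose vanishing gives $b_{00}=0$. Conversely, when the four hypotheses hold the same expansion shows that $\dot{\tilde x}$ and $\dot{\tilde y}$ are polynomials of degree exactly $2$ in $(\tilde x,\tilde y)$, which closes the argument.

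The only (mild) obstacle is the careful bookkeeping of the Laurent monomials in $\tilde x$ produced by $p(1/\tilde x,\tilde y/\tilde x^{2})$ and $q(1/\tilde x,\tilde y/\tilde x^{2})$, and in particular the cancellation of the $\tilde y^{2}/\tilde x$ term in $\dot{\tilde y}$ that is engineered by $b_{02}=2a_{11}$. The hypotheses themselves match exactly the three bulleted conditions listed before the theorem, placing the base points of (C2)---namely $(0{:}0{:}1)$, $(0{:}1{:}0)$ and the point infinitely near $(0{:}1{:}0)$ along the line at infinity---into the singular/dicritical configuration corresponding to the fourth case of Figure~\ref{F.Combinacions}, so the result is consistent with the general degree count of Corollary~\ref{T.Main.G}.
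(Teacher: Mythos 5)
Your proposal is correct, and it reaches the formula \eqref{PQC2a} by a more elementary route than the paper. The paper works projectively: it first proves a lemma pulling back the homogeneous $1$-form $\Omega=-ZQ\,dX+ZP\,dY+(XQ-YP)\,dZ$ through \eqref{C2i}, uses the Euler relation and the hypotheses $a_{00}=b_{00}=a_{02}=0$ to extract the contractile factor $U^2W$, then argues that the only possible further common factor is $U$ (occurring exactly when $b_{02}=2a_{11}$) or $W$, and finally determines via the invariant-line equation \eqref{rectainv} that $W=0$ is the unique invariant line, so that restricting to the chart $W\neq 0$ yields a quadratic affine system. You instead write the map affinely as $x=1/\tilde x$, $y=\tilde y/\tilde x^2$, pull back the affine $1$-form, rescale by $\tilde x^5$, and read off the conditions monomial by monomial. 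This is the same underlying substitution, but your packaging buys two things: it is self-contained (no appeal to Lemma \ref{L.Cre} or to the invariant-line computation), and it proves in passing the converse, namely that the four conditions are also necessary for \eqref{PQC2a} to be polynomial of degree at most two, which the paper does not state. What the paper's projective treatment buys in exchange is the geometric interpretation of the cancellations (the factors $U^2W$ and $U$ are exactly the $\Phi^{-1}$-contractile curves) and the explicit verification that $W=0$ is the unique invariant line, which justifies that the chart $W\neq 0$ is the canonical one in which the transformed foliation restricts to an affine system of the same degree (cf. Proposition \ref{P.InvariantLine}); in your version this choice of chart is made silently, though for the statement as written your direct computation of the restriction suffices. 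One cosmetic point: the phrase ``multiplying by $\tilde x^{5}$ to clear denominators'' is slightly loose, since $\tilde x^5$ clears the denominators only once the hypotheses hold; the displayed identity is nevertheless exact, and your subsequent Laurent bookkeeping identifies precisely when the result is polynomial, so the logic is sound.
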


Theorem \ref{T.C2a} is proved in section \ref{P.TC2a}.

\begin{remark}\label{R.T.Cre}
The Yablonskii system \eqref{YabQS}  satisfies the hypotheses of  Theorem \ref{T.C2a}. Applying this theorem, we obtain  Qin system \eqref{QinQS} after the transformation. This was already shown in \cite{CLS}.

\smallskip

The {\rm CLS} system \eqref{CLS4QS}, after interchanging the variables $x$ and $y$ and moving any of the three singular points different from the focus to the origin, satisfies the conditions of  Theorem \ref{T.C2a}. So three different systems of type \eqref{PQC2a} may be obtained, one for each of the three finite singular points that were moved to the origin. Indeed, two of the new differential systems are the  {\rm CLS5} system \eqref{CLS5QS} and the {\rm CLS6} system \eqref{CLS6QS}, which arise in  this way from {\rm CLS} in \cite{CLS}. The third one is new and is presented in the following theorem.
\end{remark}

Applying the plane Cremona map (C2) to these systems above, we obtain different classes of birationally equivalent differential systems.
Section \ref{S.rel} explains the transformations among these systems with further detail.



\begin{theorem}\label{T.New5}
The quadratic differential system
\begin{equation}\label{EqNew5}
\begin{split}
\dot x&=-8x+\frac \gamma 2(\gamma-16) y- (5\gamma-64) x^2   +\frac\gamma 8 (\gamma^2-256) xy,\\
\dot y&= - 28 y+\frac{24}{\gamma} x^2-3 (3 \gamma-32) x y +\frac\gamma 4 (\gamma^2-256) y^2,
\end{split}
\end{equation}
has an irreducible invariant algebraic curve of degree five given by
\begin{equation}\label{fNew5}
\begin{split}
f(x,y)=&\, \gamma  y^2-4x^2y+\frac\gamma 2  (\gamma-12) x y^2 -\frac{\gamma^2}4 (\gamma-16)y^3   +  \frac{4}{\gamma}x^4\\
&-(\gamma-24)x^3 y+ \frac{\gamma}{16} (\gamma^2-256)x^2y^2 -\frac{24}{\gamma} x^5 +(\gamma+16) x^4y.
\end{split}
\end{equation}\normalsize
Its cofactor is
\[
k(x,y)=-56 -2(13\gamma-152) x +\frac{3\gamma}4 (\gamma^2-256) y.
\]

When $\gamma\in(0,8-3\sqrt{7})$, this algebraic curve contains an algebraic limit cycle of degree five. Indeed it has two components; one of them is an oval and the other is homeomorphic to a straight line. This last component contains two singular points of the system.

\smallskip

The phase portrait of system \eqref{EqNew5} is not topologically equivalent to the phase portrait of system {\rm CLS5}.
\end{theorem}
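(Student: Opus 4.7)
The plan is to derive system \eqref{EqNew5} by applying the Cremona transformation (C2) to the CLS system \eqref{CLS4QS}, following the recipe described in Remark \ref{R.T.Cre}. Concretely, I would first interchange the roles of $x$ and $y$ in \eqref{CLS4QS} and then translate to the origin the third finite singular point lying on the non-oval component of the CLS invariant curve (i.e.\ the one not used to produce CLS5 or CLS6). A direct computation at that point would verify the hypotheses of Theorem \ref{T.C2a}: $a_{00}=b_{00}=0$ since the point is an equilibrium, $a_{02}=0$ since the point $(0\!:\!1\!:\!0)$ at infinity is singular in the translated system, and $b_{02}=2a_{11}$ since the infinitely near base point is dicritical (case (4) of Figure \ref{F.Combinacions}). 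Applying the formulae of Theorem \ref{T.C2a} and an affine rescaling, relating the old parameter $a\in(0,1/4)$ of CLS to a new parameter $\gamma$, would then produce the system \eqref{EqNew5}.

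Once the system is in hand, I would verify \eqref{fNew5} directly: compute $p\partial_x f+q\partial_y f-kf$ with the polynomials $p,q,k$ given by \eqref{EqNew5} and the stated cofactor, and check that the result vanishes identically. This is a routine polynomial identity. To confirm irreducibility of $f$ over $\R[x,y]$, I would invoke Theorem \ref{P.MP}: the CLS curve of degree four is irreducible (Proposition \ref{P.CLS}), the three base points of the Cremona map (C2) have multiplicities $m(p_i,C)=1$ each on the CLS curve (which must be checked by direct inspection of the three singular points on the non-oval component of CLS), so the direct image has degree $2\cdot 4-(1+1+1)=5$; since the Cremona map induces a birational isomorphism of the curves, irreducibility is preserved.

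For the range $\gamma\in(0,8-3\sqrt7)$, I would explicitly track how the parameter $a\in(0,1/4)$ of CLS transforms under the affine normalization carried out in the first paragraph, and verify that the boundary values $a=0$ and $a=1/4$ correspond to $\gamma=0$ and $\gamma=8-3\sqrt7$ (or its reciprocal, depending on the chosen normalization). Since the CLS curve has an oval for $a$ in that range, and since the Cremona map is an isomorphism outside its contractile locus (Lemma \ref{L.MaximalContractile}), the oval of CLS not passing through the three base points maps to an oval of $f=0$, which is the algebraic limit cycle of degree five. The second component of the CLS curve is homeomorphic to a line containing three singular points of CLS: two of them are blown up by (C2), leaving exactly two singular points on the transformed component, in agreement with the statement.

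Finally, the topological non-equivalence with CLS5 is an invariant-counting argument: I would compare the phase portraits of \eqref{EqNew5} and \eqref{CLS5QS} on the Poincaré disk (already depicted in Figure \ref{F.PP}), checking the number and type of finite and infinite singular points, and in particular the separatrix structure on the non-oval component, exhibiting at least one qualitative feature that differs between the two. The main obstacle I expect is the bookkeeping in step one, namely choosing an affine normalization that sends the transformed CLS parameter exactly to the $\gamma$ appearing in \eqref{EqNew5} and converts the interval $(0,1/4)$ to $(0,8-3\sqrt7)$; everything else reduces to formal verification or to a direct reading of the phase portrait.
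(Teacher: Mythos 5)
Your proposal follows essentially the same route as the paper: translate the third finite singular point of CLS (the one not used to produce CLS5 or CLS6) to the origin, swap $x$ and $y$, check the hypotheses of Theorem \ref{T.C2a}, apply the Cremona map \eqref{C2}, set $\gamma=2(4-\sqrt{16-a})$ so that $a\in(0,1/4)$ becomes $\gamma\in(0,8-3\sqrt 7)$, deduce irreducibility of \eqref{fNew5} from that of the CLS quartic, and conclude that the oval survives because it avoids the contractile lines of the map. One slip worth fixing: by Proposition \ref{P.CLS} the CLS curve has \emph{three} real components (an oval and two lines), with one line carrying \emph{two} singular points and the other carrying none; your statement that a line component of CLS carries three singular points, of which two are blown up ``leaving exactly two,'' is both factually off and arithmetically inconsistent, so the justification of the claim that the non-oval component of \eqref{fNew5} carries two singular points needs to be redone by tracking which base points lie on which components (the paper itself is terse here). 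Also note that for the topological non-equivalence with CLS5 your plan of comparing topological invariants of the two phase portraits is actually sounder than the paper's remark that no affine equivalence exists, since affine non-equivalence does not by itself imply topological non-equivalence.
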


We call system \eqref{EqNew5} {\rm AFL5}. Theorem \ref{T.New5} is proved in section \ref{S:new5}. Figure \ref{F.New5} shows the phase portrait of system \eqref{EqNew5} on the Poincar\'e disk.

\begin{figure}[h!]
\centering
\includegraphics[width=6cm]{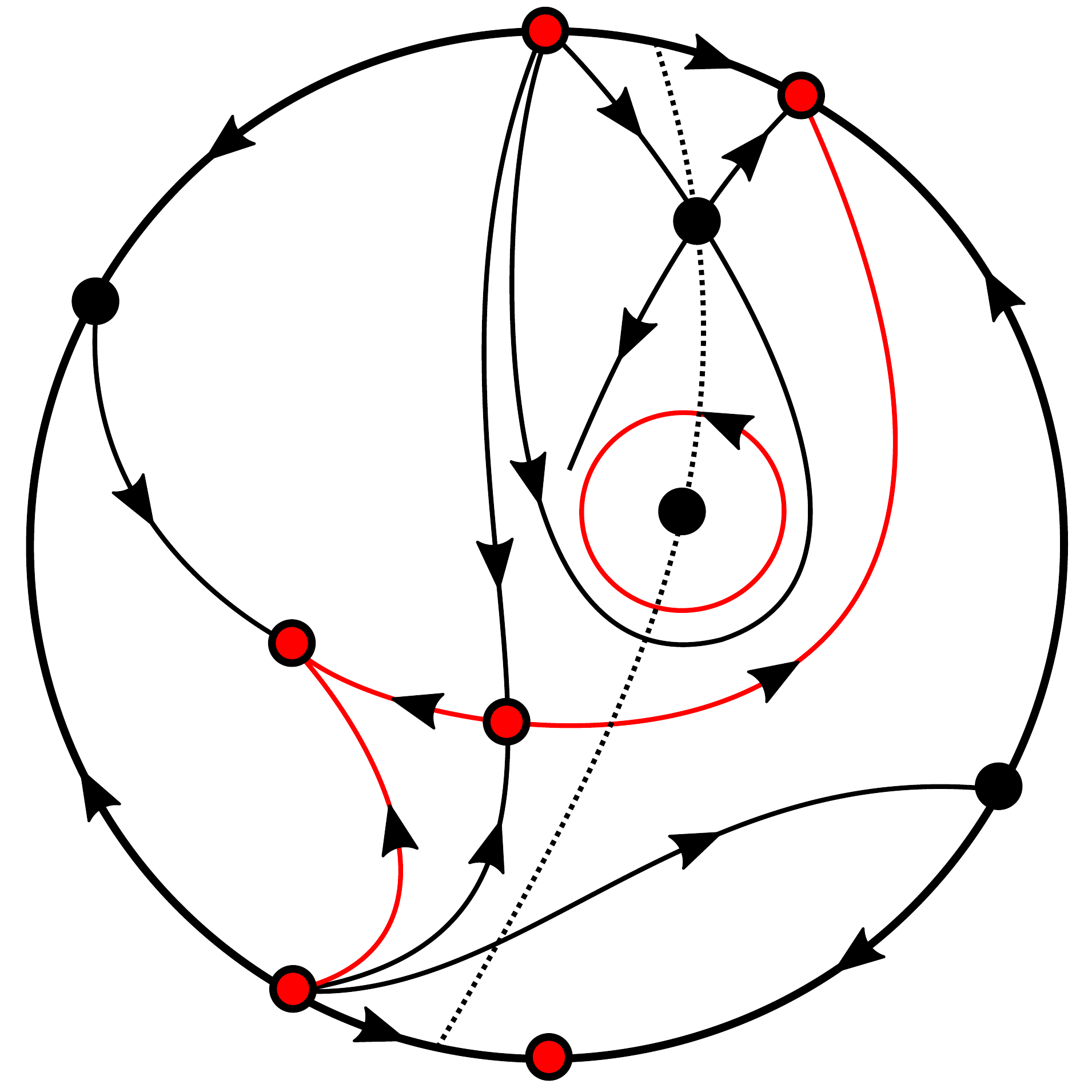}
\caption{\footnotesize Phase portrait of the quadratic differential system \eqref{EqNew5} having an algebraic limit cycle of degree 5 on the Poincar\'e disk. The red lines correspond to the invariant algebraic curve that contains the limit cycle. The dashed line corresponds to its cofactor.}\label{F.New5}
\end{figure}

\smallskip

After Remark \ref{R.T.Cre} and Theorem \ref{T.New5}, Figure \ref{F.EsquemaPP} shows all the non-equivalent phase portraits of all the known families of quadratic differential systems which have an algebraic limit cycle. The circled phase portraits correspond to those differential systems related by a quadratic Cremona transformation.

\begin{figure}[t]
\centering
\includegraphics[width=15cm]{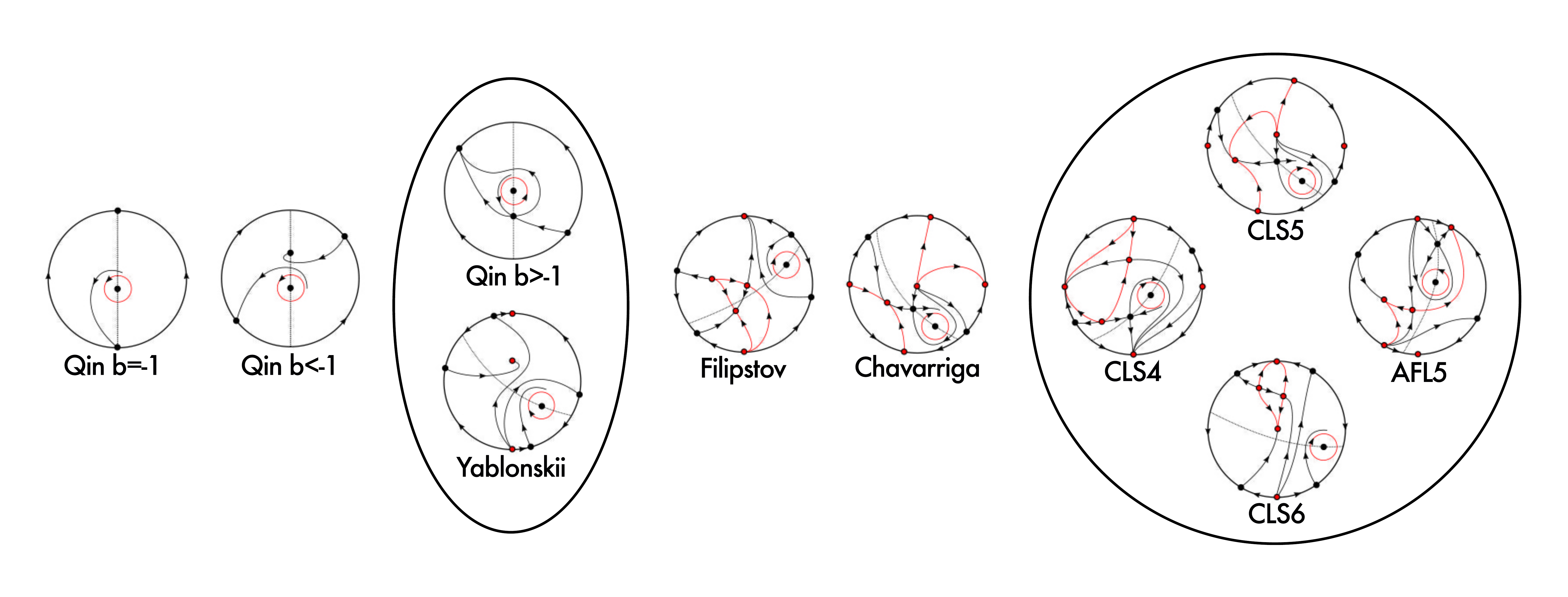}
\caption{\footnotesize Phase portraits of all the known families of quadratic differential systems having an algebraic limit cycle. The differential systems corresponding to the phase portraits inside  circles are related by the Cremona transformation \eqref{C2}. We note that we consider the different phase portraits of the Qin family depending on the value of $b$.}\label{F.EsquemaPP}
\end{figure}

\section{Proof of Theorem \ref{T.C2a}}\label{P.TC2a}

We know from section \ref{S.PVF} that the quadratic differential system \eqref{PQgen} can be thought in $\CP^2$ as a projective 1-form taking $P(X, Y, Z)=Z^2p(X/Z, Y/Z)$ and $Q(X, Y, Z)=Z^2q(X/Z, Y/Z)$. This 1-form writes as
\begin{equation}\label{Omega}
\Omega=AdX+BdY+CdZ,
\end{equation}
where we have defined $A=-ZQ$, $B=ZP$ and $C=XQ-YP$. The differential system \eqref{PQgen} is equivalent to the 1-form \eqref{Omega} of $\CP^2$.
Clearly it satisfies the Euler condition $XA+YB+ZC=0$.

\smallskip

Next lemma provides the expression of \eqref{Omega} after the application of the  Cremona transformation \eqref{C2i}.

\begin{lemma}\label{L.Cre}
After applying the Cremona transformation \eqref{C2i}, the 1-form \eqref{Omega} becomes
\begin{equation}\label{Eq.PhiOmega}
\phi^*\Omega=W(\hat Q-2V\hat P)dU+UW\hat PdV+U(V\hat P-\hat Q)dW,
\end{equation}
where $\hat P$ and $\hat Q$ are such that $\bar P=UW\hat P$ and $\bar Q=W\hat Q$.
\end{lemma}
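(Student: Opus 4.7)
The proof is a direct substitution, followed by stripping off a common factor supported on $\phi$-contractile curves. The map \eqref{C2i} is $\phi(U,V,W)=(UW,VW,U^2)$, so I first compute
\[
\phi^*(dX)=W\,dU+U\,dW,\quad \phi^*(dY)=W\,dV+V\,dW,\quad \phi^*(dZ)=2U\,dU.
\]
Writing $\bar P:=P(UW,VW,U^2)$ and $\bar Q:=Q(UW,VW,U^2)$, the coefficients $A=-ZQ$, $B=ZP$, $C=XQ-YP$ of $\Omega$ pull back to $\bar A=-U^2\bar Q$, $\bar B=U^2\bar P$ and $\bar C=UW\bar Q-VW\bar P$.

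Next I would substitute these into $\Omega$ and collect the coefficients of $dU$, $dV$, $dW$. A routine expansion yields
\[
\phi^*\Omega=\bigl(U^2W\bar Q-2UVW\bar P\bigr)\,dU+U^2W\bar P\,dV+\bigl(U^2V\bar P-U^3\bar Q\bigr)\,dW.
\]
Then I insert the factorizations $\bar P=UW\hat P$ and $\bar Q=W\hat Q$ provided in the statement: each of the three coefficients acquires a common factor $U^2W$, and the expression becomes
\[
\phi^*\Omega=U^2W\bigl[W(\hat Q-2V\hat P)\,dU+UW\hat P\,dV+U(V\hat P-\hat Q)\,dW\bigr].
\]

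Finally, as recalled in Subsection \ref{SS.Cre}, a projective 1-form representing a foliation is only defined modulo a common polynomial factor, such a factor being necessarily supported on contractile curves; here $\{U=0\}$ and $\{W=0\}$ are the two components of the $\phi$-contractile locus. Stripping off $U^2W$ produces the stated expression. As a built-in sanity check, the bracketed 1-form satisfies the Euler projectivity relation
\[
U\cdot W(\hat Q-2V\hat P)+V\cdot UW\hat P+W\cdot U(V\hat P-\hat Q)=0,
\]
confirming that it is a legitimate projective 1-form. The main obstacle is the careful bookkeeping of signs and verifying that the common $U^2W$ factor emerges once the factorizations of $\bar P$ and $\bar Q$ are inserted; once this is clear the calculation is entirely mechanical.
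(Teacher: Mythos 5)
Your computation is correct and follows essentially the same route as the paper's proof: pull back $dX$, $dY$, $dZ$, collect the coefficients of $dU$, $dV$, $dW$, insert the factorizations of $\bar P$ and $\bar Q$, and strip the common factor $U^2W$. The only detail the paper supplies that you take for granted is the existence of $\hat P$ and $\hat Q$, namely that the hypotheses $a_{00}=b_{00}=0$ and $a_{02}=0$ force $W\mid\bar P$, $W\mid\bar Q$ and $U\mid\bar P$.
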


\begin{proof}
Let $\bar A=-U^2\bar Q$, $\bar B=U^2\bar P$ and $\bar C=W(U\bar Q-V\bar P)$ are the transformations of $A,B,C$, respectively, with $(\bar P,\bar Q)(U,V,W)=(P,Q)(UW,VW,U^2)$ the transformation of $(P,Q)$ by the Cremona map. After applying  \eqref{C2i} to $\Omega$ and using the corresponding Euler condition, we obtain
\[
\begin{split}
\phi^*\Omega&=\bar A(WdU+UdW)+\bar B(VdW+WdV)+\bar C(2UdU)\\
&=(W\bar A+2U\bar C)dU+W\bar BdV+(U\bar A+V\bar B)dW\\
&=(-U^2W\bar Q+2UW(U\bar Q-V\bar P)dU+U^2W\bar PdV+(U^2V\bar P-U^3\bar Q)dW\\
&=UW(U\bar Q-2V\bar P)dU+U^2W\bar PdV+U^2(V\bar P-U\bar Q)dW.
\end{split}
\]
Notice that, since $a_{00}=0$, $b_{00}=0$ and $a_{02}=0$, we have $W|\bar P$, $W|\bar Q$ and $U|\bar P$, respectively. So there exist homogeneous polynomials  $\hat P$ and $\hat Q$ such that $\bar P=UW\hat P$ and $\bar Q=W\hat Q$. We can now simplify the expression of $\phi^*\Omega$:
\[
\phi^*\Omega=
U^2W^2(\hat Q-2V\hat P)dU+U^3W^2\hat PdV+U^3W(V\hat P-\hat Q)dW.
\]
The lemma follows after removing the common factor $U^2W$ from the expression of $\phi^*\Omega$.
\end{proof}

\begin{proof}[Proof of Theorem \ref{T.C2a}]
The  1-form  \eqref{Eq.PhiOmega} has degree four, which may correspond to a cubic affine differential system. Since we want to obtain a quadratic affine differential system, the elements in the 1-form \eqref{Eq.PhiOmega} must have a common factor. From the coefficient of $dV$, we note that  this common factor might be either $U$, or $W$, or a linear factor dividing $\hat P$. In this last case, this linear factor would also divide $\hat Q$, and thus system \eqref{PQgen} would have a common factor, which is a contradiction.  Therefore the common factor may be only either $U$ or $W$. We distinguish these two cases next.

\smallskip

Equation \eqref{Eq.PhiOmega} has $U$ as common factor if and only if $U|(\hat Q-2V\hat P)$. We  notice that $\hat P|_{U=0}=a_{11}VW$ and $\hat Q|_{U=0}=b_{02}V^2W$, hence $U$ is a common factor if and only if $b_{02}=2a_{11}$. Since this condition is satisfied by  the hypothesis of the theorem, the 1-form \eqref{Eq.PhiOmega} has degree 3. Indeed, we have
\[
\phi^*\Omega=WRdU+W\hat PdV-(\hat Q-V\hat P)dW,
\]
where $R$ is such that $UR=\hat Q-2V\hat P$.

\smallskip

We can obtain all the possible invariant straight lines to place at infinity from \eqref{rectainv}. Direct computations provide only $W=0$. So taking  $W=0$ as the line at infinity,  the 1-form \eqref{Eq.PhiOmega} provides the quadratic affine differential system $U'=\hat P,V'=-R$  in the variables $(U,V)$.

\smallskip

We recall that
\begin{multline*}
Z^2(p,q)(X/Z,Y/Z)=U^4(p,q)\left(\frac WU,\frac{VW}{U^2}\right)\\=(P,Q)(UW,VW,U^2)=(\bar P,\bar Q)(U,V,W)=(UW\hat P(U,V,W),W\hat Q(U,V,W)).
\end{multline*}
So in order to obtain the differential system \eqref{PQC2a},  we set $W=1$ and undo from the differential system $U'=\hat P,V'=-R$ the previous transformations to obtain:
\[
\begin{split}
U'&=\hat P=\frac{\bar P}{U}=\frac{U^4p}{U}={U^3}p\left(\frac 1U,\frac{V}{U^2}\right),\\
V'&=-R=\frac{2V\hat P-\hat Q}U=\frac 1U\left(2V\frac{\bar P}{U}-{\bar Q}\right)={2U^2Vp\left(\frac 1U,\frac{V}{U^2}\right)-U^3q\left(\frac 1U,\frac{V}{U^2}\right)}.
\end{split}
\]
Changing $(U,V)$ to the  usual notaion $(x,y)$ we have \eqref{PQC2a} and thus the theorem follows.
\end{proof}

\section{Cremona transformations among the known quadratic systems with algebraic limit cycles}\label{S.rel}

\subsection{From Qin to Yablonskii and back}

We apply the Cremona transformation to obtain Yablonskii family from Qin family. We have the following proposition.

\begin{proposition}\label{QinSaddle}
If Qin system is transformed into Yablonskii's system after the Cremona transformation \eqref{C2i}, then  $2 a^2 + (b+1) (2 b+1)=0$ and Qin's system has a finite saddle.
\end{proposition}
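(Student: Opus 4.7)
The plan is to apply Theorem \ref{T.C2a} to Qin's system: for the Cremona map \eqref{C2i} to transform Qin into Yablonskii and produce a quadratic system, Qin must first be brought (by an affine change) into a form satisfying the hypotheses $a_{00}=b_{00}=a_{02}=0$ and $b_{02}=2a_{11}$. I would carry out this normalization and track which parameter relation is forced by the dicritical condition.

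First I would locate the singular points of Qin \eqref{QinQS}. The hypothesis $a^2+b^2<c^2$ forces the line $ax+by+c=0$ to miss the unit circle, so the two real finite singular points both lie on the $y$-axis and are given by the roots $y_0,y_1$ of $(b+1)y^2+cy-1=0$. A short computation of $Xq_2-Yp_2=(X^2+Y^2)(aX+(b+1)Y)$ shows that the unique real singular point at infinity is $(b+1:-a:0)$.

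Next I would perform a linear change bringing $(b+1:-a:0)$ to $(0:1:0)$ (for instance $x=\tilde x+(b+1)\tilde y$, $y=-a\tilde y$), followed by a translation bringing one of the $y$-axis singular points to the origin. By construction $a_{00}=b_{00}=0$, and the alignment of the infinite singular point with $(0:1:0)$ automatically forces $a_{02}=0$. A direct substitution yields $a_{11}=a^2+b(b+1)$ and $b_{02}=-(b+1)$ in the normalized system (the translation does not affect these quadratic coefficients). The dicritical condition $b_{02}=2a_{11}$ then reads $-(b+1)=2(a^2+b(b+1))$, which rearranges to $2a^2+(b+1)(2b+1)=0$, proving the first claim.

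For the existence of a finite saddle, I would compute the Jacobian of \eqref{QinQS} at $(0,y_0)$ and obtain $\det J(y_0)=(2(b+1)y_0+c)(by_0+c)$. Using Vieta's relations $y_0+y_1=-c/(b+1)$ and $y_0y_1=-1/(b+1)$, the product over the two roots simplifies to
\[
\det J(y_0)\cdot\det J(y_1)=\frac{\bigl(-4(b+1)-c^2\bigr)(c^2-b^2)}{b+1}.
\]
The derived relation together with $a\ne0$ forces $(b+1)(2b+1)<0$, hence $-1<b<-1/2$ and in particular $b+1>0$. Combined with $c^2>b^2$ (from $a^2+b^2<c^2$) and $c^2+4(b+1)>0$, the displayed product is negative, so one of the two Jacobian determinants must be negative and the corresponding finite singular point of Qin is a saddle.

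The main obstacle I anticipate is the careful bookkeeping when performing the linear change: verifying that the alignment of the real infinite singular point makes $a_{02}$ vanish automatically, and extracting the exact values of $a_{11}$ and $b_{02}$ to reduce the dicritical condition to the single identity $2a^2+(b+1)(2b+1)=0$. Once that is handled, both the parameter relation and the saddle assertion follow by sign analysis.
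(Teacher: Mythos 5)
Your proof is correct, and the first half follows the paper's route: normalize Qin so that the hypotheses of Theorem \ref{T.C2a} hold and read off the dicritical condition $b_{02}=2a_{11}$, which indeed reduces to $2a^2+(b+1)(2b+1)=0$ (your computation of $a_{11}=a^2+b(b+1)$ and $b_{02}=-(b+1)$ checks out, and your observation that this condition is unaffected by the translation and by the residual linear freedom is the right justification; the paper instead exhibits an explicit normalized form and invokes this same hypothesis). Where you genuinely diverge is the saddle claim. The paper argues globally: the derived relation makes the eigenvalues at the unique real infinite singular point equal to $-(b+1)/(2a)$ and $-(b+1)/a$, so that point is a node, and then the Poincar\'e--Hopf index theorem on the Poincar\'e disk forces the finite singular point not surrounded by the limit cycle to be a saddle. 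You argue locally: using Vieta's relations for the two finite singular points on the $y$-axis you show
\[
\det J(y_0)\cdot\det J(y_1)=\frac{\bigl(-4(b+1)-c^2\bigr)(c^2-b^2)}{b+1}<0
\]
under the Qin parameter restrictions together with $-1<b<-1/2$ (which follows from the derived relation and $a\neq0$), so one determinant is negative and that point is a saddle. Your argument is more elementary and self-contained (no index theory, no classification of the point at infinity), while the paper's argument additionally identifies \emph{which} finite point is the saddle (the one outside the limit cycle) and pins down the nature of the infinite singular point, information that is used later when matching phase portraits. Both are valid proofs of the stated proposition.
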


\begin{proof}
We note that Qin family has always a focus, which is surrounded by the limit cycle. It also has another singular point, which can be either a saddle, or a node, or a focus. Moreover the singular point surrounded by the limit cycle may change depending on the value of the parameters. At infinity, we have a unique real singular point, whose eigenvalues are $-(a^2+ (b+1)^2)/a), -( b+1)/a$.

\smallskip

We apply Theorem \ref{T.C2a} to Qin family. First we need to assure that the hypotheses of the theorem are satisfied:\small
\[
\begin{split}
\dot x=&\left(3c-(4 b+1) \sqrt{4( b+1)+c^2}\right)x- (b+1)\left(3c+\sqrt{4( b+1)+c^2}\right)y+4 b x^2+2 (b+1) xy,\\
\dot y=& \big(x - (b +1)y\big)\left( \frac{2( (b+2) c- b \sqrt{4( b+1)+c^2})}{b+1}+4 x -4 y\right).
\end{split}
\]\normalsize
The hypothesis $\mathrm{(H_{8})}$ writes $2 a^2 + (b+1) (2 b+1)=0$. So only if this condition holds we can obtain a quadratic system after the transformation \eqref{C2i}, which is of course Yablonskii's.

\smallskip

The equality $2 a^2 + (b+1) (2 b+1)=0$ provides the eigenvalues $ -( b+1)/(2a)$ and $ -( b+1)/a$ for the singular point at infinity. This means that it is to be a node. Hence the finite singular point not surrounded by the limit cycle must be a saddle because the sum of the indices of the singular points in the Poincar\'e disk must be one by the Poincar\'e-Hopf Theorem, see \cite{DLA} for more details.
\end{proof}

A transformation from  Yablonskii's family into Qin's family  was already provided in \cite{CLS2004}, although without mentioning the Cremona transformation. We note that for Yablonskii system the hypothesis $\mathrm{(H_{8})}$ holds and then Theorem \ref{T.C2a} applies. After the Cremona transformation, we  obtain the differential system
\[
\begin{split}
\dot x&=3 (a + b) c x+ 4 y-4 a b c x^2  - (a + b) xy,\\
\dot y&=  \frac 12 (3 (a^2+b^2) - 2 a b - 8 a b c^2) x -  2 (a + b) c y-a b (a + b) x^2 - 4 a b c xy - 2 (a + b) y^2.
\end{split}
\]
The algebraic curve of the Yablonskii system becomes the oval
\[
a b \left(x-\frac{a + b}{2 a b}\right)^2 + (c + y)^2-\frac{(a - b)^2}{4 a b}=0.
\]
Its cofactor is $-8 a b c x - 4 (a + b) y$.

\smallskip

Since Qin's system is the only one having an algebraic limit cycle of degree two, this family coming from Yablonskii's family is a subfamily of  Qin's.  Indeed the above system has a saddle at the origin, as we proved in Proposition \ref{QinSaddle}. This means in particular that Qin's family having a focus and an antisaddle cannot be brought to Yablonskii family.


\subsection{From CLS to CLS5}\label{SS.45}

The quadratic Cremona transformation \eqref{C2} is also used in \cite{CLS2004} to bring the fourth family of quadratic differential systems having an algebraic limit cycle of degree four into the first known family having an algebraic limit cycle of degree five. Before applying the transformation \eqref{C2}, the singular point $(-1/(\alpha+4),(\alpha-2)/2)$ is moved to the origin and afterwards the variables $x$ and $y$ are interchanged in order to bring the infinite singular point in the direction $y=0$ to the direction $x=0$.

\smallskip

The affine  quadratic differential system that we obtain from \eqref{Eq.PhiOmega} is
\[
\begin{split}
\dot x&= -8x+ 2(\alpha^2-16)y -2 (5\alpha-12) x^2+  (\alpha^2-16) (\alpha+12)x y,\\
\dot y&=  - 28 y+\frac{12}{\alpha+4} x^2 - 6 (3\alpha-4) x y +  2  (\alpha^2-16) (\alpha+12) y^2.
\end{split}
\]
Notice that this system is CLS5 after swapping $x$ and $y$ and changing the sign of the time. We also get the algebraic curve  of degree 5
\begin{multline*}
\frac{4}{\alpha+4} x^4 -\frac{24}{\alpha+4}x^5- (8 x^2 + 4 (\alpha-8) x^3 - 4 (\alpha+12) x^4) y \\
+ (4 (\alpha+4) + 4 (\alpha-2) (\alpha+4) x + (\alpha^2-16) (\alpha+12) x^2) y^2 -  4 (\alpha^2-16)(\alpha+4) y^3=0.
\end{multline*}
Its cofactor is $-56 - 4 (13\alpha-24) x + 6 (\alpha^2-16) (\alpha+12) y$.

\subsection{From CLS to CLS6}\label{SS.46}

The family CLS is also brought  in \cite{CLS2004}, after the Cremona transformation \eqref{C2}, into the first known family of quadratic systems having an algebraic limit cycle of degree six. In this case, first  the singular point $(1/(\beta+2),-(3\beta+8)/14)$ is moved to the origin and  again the variables are interchanged in order to bring the infinite singular point in the direction $y=0$ to the direction $x=0$. The affine  quadratic differential system that we obtain from \eqref{Eq.PhiOmega} is
\[
\begin{split}
\dot x&= -8 x + \frac 27 (\beta^2-4) y +\frac 27 (13\beta+30) x^2 -  \frac 3{49} (\beta-30) (\beta^2-4) x y,\\
\dot y&= -28 y - \frac{12}{\beta+2} x^2+\frac 27 (31 \beta + 78) x y - \frac 6{49} (\beta - 30) (\beta^2 - 4) y^2.
\end{split}
\]
Notice that this system becomes CLS6 after the affine change of variables and time
\[
(x,y)=\left(-\frac 1{14} (\beta-30) x, \frac{\beta}{49} (\beta-30)^2 (14 x+3(\beta^2 -4) y)\right),\quad \frac{dt}{ds}= -3\beta (\beta-30).
\]
Moreover the algebraic curve of degree 6
\[
\begin{split}
196 &x^4 + 56 ( 2 \beta+3 ) x^5 +  8 ( \beta+12 ) ( 2 \beta+3 ) x^6 \\
&+ (392 ( \beta+2 ) x^2 +     28 ( \beta^2-4 ) x^3 -     12 ( \beta^2-4 ) ( 2 \beta+3 ) x^4) y\\
& + (196 ( \beta +2)^2 -     28 ( \beta+2 )^2 ( 3 \beta+8 ) x + 9 ( \beta^2-4 )^2 x^2) y^2 -  28 ( \beta^2-4 ) ( \beta+2 )^2 y^3=0.
\end{split}
\]
is obtained. Its cofactor is $-56 +12 (13\beta+30) x/7 - 18 (\beta-30 ) (\beta^2-4) y/49$.

\subsection{From CLS5 to CLS6}

We see in this subsection that the quadratic Cremona transformation \eqref{C2} can also be used to bring the family CLS5 having an algebraic limit cycle of degree five into the family CLS6 having an algebraic limit cycle of degree six. Before applying the transformation, the singular point
\[
\left(\frac{2 ( 35 \alpha^2 - 54 \alpha -288)-2 (13 \alpha-24 )\beta}{( \alpha-6) ( \alpha^2-16) (\alpha+12)^2}, \frac{14}{6 - 7 \alpha - 3\beta}\right)
\]
is moved to the origin and the variables are  interchanged. Recall that $\beta=\sqrt{ 7 \alpha^2-108}$.
The affine  quadratic differential system that we obtain from \eqref{Eq.PhiOmega} is\footnotesize
\[
\begin{split}
\dot x=&\frac{-24 \sqrt{7}+ 10 \sqrt{108 + \beta^2}}{\sqrt{7}}x  - \frac 1{49} ( \beta^2-4) (84 + \sqrt{7} \sqrt{108 + \beta^2}) y\\
&+\frac{7704 - 156 \sqrt{7} \sqrt{108 + \beta^2} +
    2 \beta (348 + 39 \beta - 17 \sqrt{7} \sqrt{108 + \beta^2})}{(\beta-30) (\beta+12)} x^2 +\frac{    6 (\beta-30) (\beta^2-4) }{    7 (6 - 3 \beta - \sqrt{7} \sqrt{108 + \beta^2})}xy,\\
\dot y=&\frac{84}{28 +      \sqrt{7} \sqrt{108 + \beta^2}}x+\frac{-24 \sqrt{7}+ 2 \sqrt{108 + \beta^2}}{\sqrt{7}}y \\
    &-168\frac{4956 - 186 \sqrt{7} \sqrt{108 + \beta^2} +
     \beta (-196 + 35 \beta + 9 \sqrt{7} \sqrt{108 + \beta^2})}{(\beta^2-4) (6 - 3 \beta - \sqrt{7} \sqrt{108 + \beta^2})^2}x^2 \\
     &+ \frac{22968 - 492 \sqrt{7} \sqrt{108 + \beta^2} +        2 \beta (1032 + 141 \beta - 59 \sqrt{7} \sqrt{108 + \beta^2})}{(\beta-30) (\beta+12)}xy + \frac{12 ( \beta-30 ) (\beta^2-4)}{ 7 (6 - 3 \beta - \sqrt{7} \sqrt{108 + \beta^2})}y^2.
\end{split}
\]\normalsize
This system becomes CLS6 after an affine change of variables and time. 

\section{Proof of Theorem \ref{T.New5}}\label{S:new5}

System \eqref{CLS4QS} has three finite singular points different from the focus. Two of them were already used in \cite{CLS2004} (see also sections \ref{SS.45} and \ref{SS.46}) to obtain new families having algebraic limit cycles of degree 5 and 6. So we move the third singular point
$\big(1/(\alpha-4), - 1 - \alpha/2\big)$ to the origin and swap the variables $x$ and $y$. The system writes:
\[
\begin{split}
\dot x=&-8 x^2 - (\alpha-12 ) (\alpha^2-4) y +   2 (5\alpha+12)x + 2 (\alpha^2-4)x y,\\
\dot y=&  \frac{12}{\alpha-4}x+ 2 (\alpha+12) y + 12x y+ 4 (\alpha^2-4) y^2.
\end{split}
\]

Now the hypotheses of Theorem \ref{T.C2a} are satisfied. Hence the Cremona transformation (C2) provides the quadratic differential system \eqref{EqNew5} with invariant infinity, and  the algebraic curve  \eqref{fNew5} of degree 5, where we have set $\gamma=2(4-\sqrt{16-a})$ for simplicity of the results.

\smallskip

The phase portrait of system \eqref{EqNew5} is non-topologically equivalent to that of system CLS5 because there is no affine change of variables and scaling of the time that sends one to the other. 
The irreducibility of \eqref{fNew5} follows from the irreducibility of \eqref{CLS4QS}.

\smallskip

Since the curve \eqref{fNew5} contains an algebraic limit cycle for $a\in(0,1/4)$, one may easily check that the  oval of \eqref{fNew5} does not intersect the straight lines of the Cremona transformation (C2), so the theorem follows.\cqb

\section*{Acknowledgements}

M. Alberich-Carrami\~nana is also with the Barcelona Graduate School of Mathematics (BGSMath), and she is partially supported by Spanish Ministerio de Econom\'ia y Competitividad grant MTM2015-69135-P and by Generalitat de Catalunya 2017SGR-932 project.

A. Ferragut and J. Llibre are  partially supported by the MINECO grants MTM2016-77278-P and MTM2013-40998-P.

A. Ferragut is partially supported by the Universitat Jaume~I grant P1-1B2015-16.

J. Llibre is partially supported by an AGAUR grant number 2014SGR-568, and the grant FP7-PEOPLE-2012-IRSES 318999.


\begin{thebibliography}{SK}

\normalsize
\baselineskip=17pt

\bibitem{A-C2002} {\sc M. Alberich-Carrami\~nana}, {\it Geometry of the plane Cremona maps}, Lecture Notes in Mathematics {\bf1769}, Springer-Verlag, 2002.

\bibitem{ACAMB16}
{\sc M.~Alberich-Carrami\~nana, J.\`Alvarez Montaner and G.~Blanco},
{\it Effective computation of base points of two-dimensional ideals},
 preprint available at arXiv:1605.05665, 2016.

\bibitem{AF} {\sc M.J. \'Alvarez, A. Ferragut}, {\it Local behavior of planar analytic vector fields via integrability}, J. Math. Anal. Appl. {\bf 385} (2012), 264--277.


\bibitem{ALGM} {\sc A.A. Andronov, E.A. Leontovich, I.I. Gordon, A.G. Maier},
Qualitative Theory of Second-Order Dynamic Systems, John Wiley \&
Sons, New York 1973.

%
%
%

\bibitem{casas}
{\sc E.~Casas-Alvero},
{\it Singularities of Plane Curves}.
 London Mathematical Society Lecture Note Series, {\bf 276}, Cambridge University Press, 2000.

\bibitem{CD2015} {\sc D. Cerveau, J. D\'{e}serti}, {\it Action fo the Cremona group on foliations of $\mathbb{P}^{2}_{\mathbb{C}}$: some curious facts}, Forum Math. {\bf 27} (2015), 893--905.


\bibitem{CGL} {\sc J. Chavarriga, H. Giacomini, J. Llibre}, {\it Uniqueness of algebraic limit cycles for quadratic systems}, J. Math. Anal. Appl. {\bf 261} (2001), 85--99.

\bibitem{CLM} {\sc J. Chavarriga, J. Llibre, J. Moulin Ollagnier}, {\it On a result of Darboux}, LMS J. Comput. Math. {\bf 4} (2001), 197--210.

\bibitem{CLS2004} {\sc J. Chavarriga, J. Llibre, J. Sorolla}, {\it Algebraic limit cycles of degree 4 for quadratic systems}, J. Differential Equations {\bf 200} (2004), 206--244.

\bibitem{CLS} {\sc C. Christopher, J. Llibre, G. \'Swirszcz}, {\it Invariant algebraic curves of large degree for quadratic systems}, J. Math. Anal. Appl. {\bf 303} (2005), 450--461.

\bibitem{Co} {\sc W.A. Coppel}, {\it Some quadratic systems with at most one limit cycle}, Dynamics reported, Dynam. Report. Ser. Dynam. Systems. Appl., {\bf 2} (1989), 61--68.

\bibitem{CL} {\sc B. Coll, J. Llibre}, {\it Limit cycles for a quadratic system with an invariant straight line and some evolution of phase portraits}, Colloquia Mathematica Societatis Janos Bolyai, {\bf 53}, Qualitative Theory of Differential Equations, Bolyai Institut, Szeged, Hungary (1988), 111--123.

\bibitem{D} {\sc G. Darboux}, {\it M\'emoire sur les \'equations diff\'erentielles alg\'ebriques du premier ordre et du premier degr\'e} (M\'elanages), Bull. Sci. Math. {\bf 2} (1878), 60--96, 124--144, 152--200.

\bibitem{DLA} {\sc F. Dumortier, J. Llibre, J.A. Art\'es}, {\it Qualitative theory of planar differential systems}, Springer-Verlag, Berlin, 2006.

\bibitem{E1} {\sc R.M. Evdokimenco}, {\it Construction of algebraic paths and the qualitative investigation in the large of the properties of integral curves of a system of differential equations}, Differential Equations {\bf 6} (1970), 1349--1358.

\bibitem{E2} {\sc R.M. Evdokimenco}, {\it Behavior of integral curves of a dynamic system}, Differential Equations {\bf 9} (1974), 1095--1103.

\bibitem{E3} {\sc R.M. Evdokimenco}, {\it Investigation in the large of a dynamic system}, Differential Equations {\bf 15} (1979), 215--221.

\bibitem{F} {\sc V.F. Filipstov}, {\it Algebraic limit cycles}, Differential Equations {\bf 9} (1973), 983--986.

\bibitem{H} {\sc D. Hilbert}, {\it Mathematische Problem} (lecture), Second Internat. Congress Math. Paris 1900 Nachr. Ges. Wiss. G\"ottingen Math-Phys. Kl. (1900), 253--297. English transl., Bull. Amer. Math. Soc. {\bf 8} (1902), 437--479.

\bibitem{LS} {\sc J. Llibre, D. Schlomiuk}, {\it On the limit cycles bifurcating from an ellipse of a quadratic center}, Discr. Cont. Dyn. Systems Series A {\bf 35} (2015), 1091--1102.

\bibitem{MP} {\sc L. G. Mendes and J. V. Pereira}, {\it Hilbert modular foliations on the projective plane}, Comment. Math. Helv. {\bf 80} (2005), 243--291.

\bibitem{S} {\sc A. Seidenberg}, {\it Reduction of singularities of the differential equation $Ady=B dx$}, Amer. J. Math. {\bf 90} (1968),248--269.

\bibitem{Q} {\sc Y.-X. Qin}, {\it On the algebraic limit cycles of second degree of the differential equation $dy/dx =  \sum_{0\leq i+j\leq2}a_{ij} x_iy_j /  \sum_{0\leq i+j\leq2}b_{ij} x_iy_j$}, Acta Math. Sinica {\bf 8} (1958), 23--35.

\bibitem{Y} {\sc A.I. Yablonskii}, {\it Limit cycles of a certain differential equations}, Differential Equations {\bf 2} (1966), 335--344 (in Russian).

%
%
%
%
%
%
%
%


%
%
%
%
%
%
%
%

\end{thebibliography}
\end{document}